\documentclass[amsfonts,11pt]{amsart}
\RequirePackage{amsmath,amssymb,amsthm, amscd, comment,mathtools}
\usepackage[margin=1.2in]{geometry}
\usepackage{amsmath}
\usepackage{amsthm}
\usepackage{amssymb}
\usepackage{amscd}
\usepackage{mathrsfs}
\usepackage{graphicx}
\usepackage{rotating, graphicx}
\usepackage[all,cmtip]{xy}
\usepackage{enumerate}
\usepackage{tabularx}
\usepackage{tikz-cd}
\usepackage[thinlines]{easytable}
\usepackage{multirow}
\usepackage{mathtools}
\usepackage{mathrsfs}
\usepackage{changepage} % for \adjustwidth
\usepackage{paralist}%\usepackage{enumerate}

\usepackage{kantlipsum}
\usepackage{mathabx}
\usepackage{enumitem}
\usepackage{times}
\usepackage{subfig}
\usepackage{color}
\usepackage[pagebackref,breaklinks,colorlinks,linkcolor=red,anchorcolor=red,citecolor=blue]{hyperref}

\calclayout

\newcommand{\SH}{\mathbf{SH}}
\newcommand{\DK}{\mathbf{DK}}
\newcommand{\KM}{\mathbf{KM}}
\newcommand{\KC}{\mathbf{KC}}
\newcommand{\KGL}{\mathbf{KGL}}
\newcommand{\KBM}{K^{\mathrm{BM}}}

\newcommand{\Tor}{\operatorname{Tor}}

\newcommand{\one}{\mathbf{1}}

\newcommand{\A}{\mathbb{A}}

\newtheorem{proposition}[subsection]{Proposition}
\newtheorem{corollary}[subsection]{Corollary}
\newtheorem{theorem}[subsection]{Theorem}
\newtheorem{lemma}[subsection]{Lemma}

\theoremstyle{definition}
\newtheorem{definition}[subsection]{Definition}
\theoremstyle{remark}

\newtheorem{remark}[subsection]{Remark}

\numberwithin{equation}{subsection}

\DeclareMathAlphabet{\mathbbold}{U}{bbold}{m}{n}

\title[]{$K_0$-motives and the Chow weight-heart}

\author{Fangzhou Jin}
\address{School of Mathematical Sciences\\
Key Laboratory of Intelligent Computing and Applications (Ministry of Education)\\
Tongji University\\
Siping Road 1239\\
200092 Shanghai\\
China}
\email{\href{mailto:fangzhoujin@tongji.edu.cn}{fangzhoujin@tongji.edu.cn}}
\urladdr{\url{https://fangzhoujin.github.io/}}

\author{Kunpeng Li}
\address{School of Mathematical Sciences\\
Key Laboratory of Intelligent Computing and Applications (Ministry of Education)\\
Tongji University\\
Siping Road 1239\\
200092 Shanghai\\
China}
\email{\href{mailto:2533770@tongji.edu.cn}{2533770@tongji.edu.cn}}

\date{\number\day-\number\month-\number\year}

\begin{document}

\maketitle

\begin{abstract}

Bondarko and Luzgarev constructed the Chow weight structure on the homotopy category of $KGL$-modules; we show that the heart of this weight structure is equivalent to the category of $K_0$-motives introduced by Gillet and Soul\'e, which is the $K$-theoretic counterpart of the category of relative Chow motives defined by Corti and Hanamura. The proof is achieved via a detailed study on the Borel-Moore theory in $KGL$-modules, as well as explicit formulas for products in algebraic $G$-theory in terms of $\operatorname{Tor}$-operations.

\end{abstract}

\setcounter{tocdepth}{1}
\tableofcontents

\noindent
\section{Introduction}

The notion of \emph{weight structure} is introduced by Bondarko as a counterpart of t-structures (see \cite{Bon10} and \cite{Bon14}), which allows to define analogues of Deligne's weight filtration and weight spectral sequence in various motivic categories (\cite{Del}). The weight-$0$ part, also known as the \emph{heart} of the weight structure, carries very interesting information: on the one hand, one should expect to define a bounded filtrations on motivic sheaves, with graded pieces living in the heart; on the other hand, even if the heart need not be an abelian category, one could recover a weight structure from its heart (\cite[Prop. 5.2.2]{Bon10}).

One of the main examples is the so-called \emph{Chow weight structure}. Let $S$ be a separated scheme of finite type over a perfect field $k$ of exponential characteristic $p$. 
H\'ebert (\cite{Heb}) and Bondarko (\cite{Bon14}) independently proved the existence of a Chow weight structure on the category of constructible cdh-motives $\operatorname{DM}_{cdh,c}(S,\mathbb{Z}[1/p])$ (\cite{CD2}). In \cite{Jin16}, the first-named author proved the following result:
\begin{theorem}[\textrm{\cite[Theorem 3.17]{Jin16}}]
%Let $S$ be a quasi-projective scheme over a perfect field $k$ of exponential characteristic $p$. 

There is an equivalence
\begin{align}
\label{eq:Jin317}
\mathbf{CHM}(S)[p^{-1}]\simeq \operatorname{Ho}(\mathbf{DM}_{cdh,c}^{w=0}(S)[p^{-1}]
\end{align}
between the $\mathbb{Z}[1/p]$-linear categories of relative Chow motives over $S$ defined by Corti-Hanamura (\cite[Def. 2.17]{CH}) and the homotopy category of the heart of the Chow weight structure on $\operatorname{DM}_{cdh,c}(S)[p^{-1}]$.
\footnote{In \cite{Jin16}, the scheme $S$ is assumed quasi-projective over $k$. This hypothesis is indeed unnecessary, as we will see below.}
\end{theorem}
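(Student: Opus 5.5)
The strategy is to compare both sides through a fully faithful functor from the additive category generated by the motives $M_S(X)$, with $X\to S$ proper and $X$ smooth over $k$, into $\operatorname{DM}_{cdh,c}(S)[p^{-1}]$. Bondarko and Hébert construct the Chow weight structure so that its heart is the Karoubi-closed additive hull of the objects $f_*\mathbb{Z}[1/p]_X(n)[2n]$, for $f\colon X\to S$ proper and $X$ regular (or smooth over $k$). Relative Chow motives in the sense of Corti–Hanamura are, by definition, the idempotent completion of the category whose objects are pairs $(X,n)$ with $X$ smooth quasi-projective over $k$ and proper over $S$. The morphisms of that category are
\[
\operatorname{Hom}((X,m),(Y,n))=\operatorname{CH}_{\dim Y+m-n}(X\times_S Y)[1/p].
\]
Both categories are therefore Karoubian envelopes of explicit generating categories, and it suffices to give a fully faithful functor on generators whose essential image generates the heart.

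First, send $(X,n)$ to $f_*\mathbb{Z}[1/p]_X(n)[2n]$. Since $f$ is proper, $f_*=f_!$. For the Hom-group we apply adjunctions together with proper base change along the fibre product. Absolute purity for the smooth scheme $Y$ gives $g^!\simeq g^*(d_Y)[2d_Y]$, up to twisting by $S$ on the base. This identifies
\[
\operatorname{Hom}\bigl(f_*\mathbb{Z}(m)[2m],\,g_*\mathbb{Z}(n)[2n]\bigr)
\]
with the Borel–Moore motivic homology $H^{BM}_{2(d_Y+m-n),\,d_Y+m-n}(X\times_S Y)$. By the cdh-descent comparison (Kelly, Cisinski–Déglise, with $p$ inverted), this group is $\operatorname{CH}_{d_Y+m-n}(X\times_S Y)[1/p]$, and the identification holds for arbitrary separated finite type $X\times_S Y$, which need not be smooth. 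This part needs no quasi-projectivity of $S$.

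Second, we check compatibility with composition. Composition in $\operatorname{DM}$ corresponds to the refined Gysin pullback along the diagonal of $Y$ (smooth, so a regular embedding), followed by proper pushforward. That is exactly Fulton's formula for Corti–Hanamura composition. We verify this using the compatibility of the Borel–Moore isomorphism with proper pushforward and with Gysin maps (via the fundamental class of Déglise). Identities correspond to the graph classes.

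Third, essential surjectivity up to retracts. Here the quasi-projectivity hypothesis could a priori intervene. Generators of the heart are $f_*\mathbb{Z}(n)[2n]$ for $X$ smooth (possibly non-quasi-projective) and proper over $S$, or regular after de Jong–Gabber alterations. We use Gabber's $\ell'$-alterations for all primes $\ell\neq p$, combined with trace arguments, to show that each such object is a retract of some $f'_*\mathbb{Z}[1/p](n)[2n]$ with $X'$ smooth quasi-projective. Quasi-projectivity can be arranged by Chow's lemma followed by alteration. Since $X\to S$ is proper, we may need a projective modification $X'\to X$. Note that $X'$ over $k$ remains quasi-projective when $S$ merely admits a quasi-projective envelope locally, so we arrange quasi-projectivity of $X'$ over $k$ directly rather than through $S$.

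The main obstacle is this last step, with $\mathbb{Z}[1/p]$ rather than rational coefficients. Retracts from alterations of degree prime to $\ell$ only work after $\ell$-localization, so we must patch over all $\ell$. We would argue Hom-group-wise: a morphism in the Karoubian envelope is determined by its localizations at every prime $\ell\neq p$. The generating set of the heart can therefore be replaced by the smooth quasi-projective one, since the idempotent-complete categories agree after each $\ell$-localization and Hom-groups are finitely generated only after localization. If this patching fails, the fallback is to show directly, via Gabber's refined alteration with two coprime degrees, that $f_*\mathbb{Z}[1/p]$ is a retract of a finite sum of such objects.
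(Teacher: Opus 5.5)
Your Hom computation and composition check are the argument the paper relies on. This is Jin's original argument, mirrored in the $K$-theoretic setting by \eqref{eq:KBMHom} and Proposition~\ref{prop:composition}. One correction: $g^!$ is not $g^*(d_Y)[2d_Y]$ when $S$ is singular. What you actually need is the adjunction $g^*\dashv g_*$, proper base change, and purity $\one_Y(d_Y)[2d_Y]\simeq\pi_Y^!\one_k$ for $\pi_Y\colon Y\to\operatorname{Spec}k$.

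The real difference is in how the quasi-projectivity hypothesis on $S$ is removed. The paper does it at the level of generation. Lemma~\ref{lm:gen} uses Temkin's $\mathcal{P}$-alterations, a trace retraction at the generic point, continuity, localization and induction on dimension. It shows that the objects $f_*\one_X(n)$, with $f$ projective and $X$ regular, generate the constructible category over any $\mathcal{P}$-nice base. Theorem~\ref{th:wtexist} then \emph{produces} the weight structure, with heart the Karoubi closure of these objects, without using any prior description of the heart.

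You instead import the heart over an arbitrary $S$ from H\'ebert--Bondarko. You then use Chow's lemma over $k$, alterations, and the degree formula $h_*h^*=\deg(h)$ to make every heart generator a retract of motives of smooth $k$-quasi-projective schemes. Such schemes, being proper over $S$, are projective over $S$, since an immersion into $\mathbb{P}^N_S$ that is proper over $S$ is closed. This buys compatibility with Corti--Hanamura's requirement that objects be quasi-projective over $k$, which the paper leaves implicit: a regular $X$ projective over a non-quasi-projective $S$ need not be quasi-projective over $k$. The price is that you outsource exactly the generation statement the paper proves.

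Discard your first ``patching'' argument. Knowing that an object is a retract of such motives after localizing at each $\ell\neq p$ does not by itself produce a retraction with $\mathbb{Z}[1/p]$-coefficients.

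Your fallback is the right fix, but Gabber's theorem does not directly hand you two alterations of coprime degree. Take one alteration $h_0$ of degree $d_0$. Then, for each of the finitely many primes $\ell\neq p$ dividing $d_0$, take an $\ell'$-alteration $h_\ell$. A B\'ezout relation $\sum_i a_i\deg(h_i)=p^m$ makes $\sum_i a_i\,h_{i*}h_i^*$ invertible. This exhibits $f_*\one_X(n)[2n]$ as a retract of the finite sum $\bigoplus_i (fh_i)_*\one_{X_i}(n)[2n]$. Alternatively, a single Temkin $p$-alteration, as used in the paper, suffices.
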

The goal of this paper is to investigate a $K$-theoretic analogue of the equivalence~\eqref{eq:Jin317}. 
On the side of Chow motives, Gillet-Soul\'e defined an additive category $\KM(S)$ of \textbf{$K_0$-motives} using the intersection theory in algebraic $G$-theory (\cite[5.4]{GiS09}). We will recall this construction in Definition~\ref{def:K0M}.
On the other hand, denote by $\KGL_S$ the motivic algebraic $K$-theory spectrum (\cite{Rio10}), which has a structure of $\mathbb{E}_\infty$-ring spectrum (\cite{NSO}).
Define 
\begin{align}
\DK(S)=\operatorname{Mod}_{\KGL_S}
\end{align}
as the $\infty$-category of modules over the $\mathbb{E}_\infty$-ring spectrum $\KGL_S$. Let $\DK_c(S)\subset\DK(S)$ be the full subcategory of compact objects (which agrees with the constructible objects). In \cite{BoL16}, Bondarko-Luzgarev proved the existence of a Chow weight structure on $\DK_c(S)$ (and more generally on the whole $\DK(S)$). Our main theorem is the following:
 % as the idempotent completion of the category $KC(S)[\mathcal{S}^{-1}]$ .
\begin{theorem}[See Theorem~\ref{thm:main}]
%Let $S$ be a separated scheme of finite type over a perfect field $k$ of exponential characteristic $p$. Then t
Let $S$ be a scheme such that there is a dominant morphism of finite type $S\to B$ with $B$ an integral, separated scheme of dimension at most $3$. Let $\mathcal{P}$ be the set of all primes that are not invertible on $S$.
There is an equivalence
\begin{align}
\KM(S)[\mathcal{P}^{-1}]\simeq \operatorname{Ho}(\DK_c^{w=0}(S)[\mathcal{P}^{-1}])
\end{align}
between the category of $K_0$-motives and the homotopy category of the heart of the Chow weight structure on $\DK_c(S)$.
\end{theorem}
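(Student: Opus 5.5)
We follow the strategy of \cite[Theorem 3.17]{Jin16}, with motivic cohomology replaced by $\KGL$-modules and Chow groups by $G_0$. By Bondarko's general theory (\cite[Prop. 5.2.2]{Bon10}, \cite{BoL16}), $\operatorname{Ho}(\DK_c^{w=0}(S))$ is the idempotent completion of the full additive subcategory generated by the objects $f_!\one_X=f_*\one_X$ (up to Tate twists, which are trivial by Bott periodicity) with $f\colon X\to S$ proper and $X$ regular. After inverting $\mathcal{P}$ one may also need regular alterations or resolutions, which is why we assume $S\to B$ with $\dim B\le 3$. We will therefore define a functor $\Phi\colon\KM(S)\to\operatorname{Ho}(\DK_c(S))$ with $\Phi(X)=f_*\one_X$ for $X$ regular and proper over $S$. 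We then show that $\Phi$ is fully faithful after inverting $\mathcal{P}$, and that its idempotent completion hits all of the heart.

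The first key step is the Hom computation. For $X,Y$ regular and proper over $S$, using adjunctions, proper base change and the purity isomorphism $f^!\one\simeq\one$ for smooth or regular $f$ (absolute purity for $\KGL$, with Bott periodicity killing twists), we obtain
\[\operatorname{Hom}(f_*\one_X,g_*\one_Y)\simeq \operatorname{Hom}(\one_{X\times_SY},\pi^!\one_S)\simeq \KBM_0(X\times_SY/S).\]
Here $\KBM$ denotes Borel--Moore $\KGL$-homology. Next we identify the Borel--Moore theory of $\KGL$ with $G$-theory: $\KBM_n(Z/S)\simeq G_n(Z)$ for $Z$ separated of finite type over $S$. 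We prove this first for $Z$ regular, using purity and $\KGL$ representing $K$-theory of regular schemes, and then in general by localization sequences on both sides and noetherian induction. Thus $\operatorname{Hom}(\Phi X,\Phi Y)\simeq G_0(X\times_SY)$, which matches the morphism groups of $\KM(S)$ as defined in Definition~\ref{def:K0M}.

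The second step, which I expect to be the main obstacle, is compatibility with composition. In $\KM(S)$ the composition of $\alpha\in G_0(X\times_SY)$ and $\beta\in G_0(Y\times_SZ)$ is $p_{XZ*}(p_{XY}^*\alpha\cdot_{Y}p_{YZ}^*\beta)$, where the product is the refined $\Tor$-product $\sum(-1)^i[\Tor_i^{\mathcal{O}_Y}(\cdot,\cdot)]$ over the regular $Y$. In $\DK$ the composition is built from the diagonal $\one_Y\to\Delta_*\one_Y$, which encodes the ring structure of $\KGL$, together with Gysin maps. So we must show that the Borel--Moore product in $\KGL$-modules, refined along the regular diagonal $Y\to Y\times_SY$, agrees under the identification above with the $\Tor$-formula. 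First I would compare both products on classes $[\mathcal{O}_V]$ of closed subschemes. I would then reduce to the case of transversal intersections by deformation to the normal cone, where both sides are pullback along a regular embedding. Finally I would use the $\Tor$ Koszul computation, which matches the Gysin map $G_0(W)\to G_0(W\times_{Y\times Y}Y)$ given by derived tensor with $\mathcal{O}_{\Delta}$. Proper pushforward compatibility is formal from the $f_!$-functoriality.

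Once composition is settled, $\Phi$ is a fully faithful additive functor. It extends to the idempotent completion, since $\KM(S)$ is defined as pseudo-abelian, and to the $\mathcal{P}$-inverted categories. Essential surjectivity onto $\operatorname{Ho}(\DK_c^{w=0}(S)[\mathcal{P}^{-1}])$ then follows from the description of the heart via Chow generators. The degree-$0$ restriction also requires the vanishing $\operatorname{Hom}(\Phi X,\Phi Y[n])=G_{-n}=0$ for $n>0$, which shows that these objects lie in the heart.
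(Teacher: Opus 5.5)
\textbf{There is a genuine gap, in the Hom computation, and it carries over to your composition step.}

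\textbf{The Hom computation.} You pass from $\operatorname{Hom}(f_*\one_X,g_*\one_Y)$ to Borel--Moore theory of $X\times_SY$ \emph{relative to $S$}. This uses $g^!\one_S\simeq\one_Y$. You then assert $\KBM_n(Z/S)\simeq G_n(Z)$ (the paper's $\DK^{BM}_n(Z/S)$) for every $Z$ of finite type over $S$.

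Nothing in the hypotheses makes $S$ regular, and both claims fail for singular $S$:
\begin{itemize}
\item absolute purity needs regular source \emph{and} target;
\item already for $Z=S$ one has $\DK^{BM}(S/S)=\operatorname{Map}_{\DK(S)}(\one_S,\one_S)\simeq KH(S)$, which is not $G(S)$. For a nodal curve $KH_{-1}\cong\mathbb{Z}$, whereas $G$-theory is connective.
\end{itemize}
So your noetherian induction fails at its base case. Even over a regular base, such an induction would first need a comparison map compatible with boundary maps; the paper builds one from a closed embedding into a smooth scheme and $K(M\textrm{ on }X)\simeq G(X)$.

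The paper avoids the problem in \eqref{eq:KBMHom} by stopping after adjunction and base change:
$\operatorname{Map}(\KBM(X/S),\KBM(Y/S))\simeq\operatorname{Map}(g^*\KBM(X/S),\one_Y)\simeq\DK^{BM}(X\times_SY/Y)$.
This is Borel--Moore theory over the \emph{regular} scheme $Y$, where Theorem~\ref{thm:Grep} gives $G(X\times_SY)$. That theorem is proved only over a regular base and for quasi-projective schemes. This is one reason to keep the objects projective, as in Definition~\ref{def:K0M}, rather than merely proper.

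\textbf{The composition step.} For the same reason, you cannot refine along the diagonal $Y\to Y\times_SY$. It is a regular immersion when $Y\to S$ is smooth, but not in general over a singular base. Already for the normalization $\mathbb{A}^1\to\operatorname{Spec}k[t^2,t^3]$ the diagonal is not lci, so there is no refined Gysin map along it.

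The $\Tor^{\mathcal{O}_Y}$ in Definition~\ref{def:K0M} tells you the product must be formed over $Y$. The paper expresses composition through Proposition~\ref{prop:composition} as a push-forward of a product of Borel--Moore classes. It then compares this with the $\Tor^{\mathcal{O}_Y}$-formula using only Borel--Moore theory over regular bases, via the compatibility of Theorem~\ref{thm:Grep} with:
\begin{itemize}
\item projective push-forward (Lemma~\ref{lm:DKGpf});
\item refined Gysin maps (Lemma~\ref{lm:DKGrG}, with the explicit formula of Lemma~\ref{lm:refGysexp});
\item $G_0$-products over a regular base (Lemma~\ref{lm:DKG0prodd}).
\end{itemize}

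Once you work relative to $Y$ throughout, your remaining steps agree with the paper: negativity from connectivity of $G$, full faithfulness, passage to idempotent completions, and essential surjectivity from generation by projective regular schemes (Temkin's $\mathcal{P}$-alterations, Lemma~\ref{lm:gen}).
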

%Note that if $k$ is a perfect field that satisfies embedded resolution of singularities (in the sense of \cite[2.1.7]{Jin24}), then the result holds without inverting $p$.
Compared to \cite{Jin16}, our result holds in mixed characteristic. This is because $K$-theory and $G$-theory have better functoriality in the mixed characteristic case.

Here is the organization of the paper. In Section~\ref{sec:funcG} we recall the functoriality of algebraic $G$-theory, which is used in the construction of $\KM(S)$ in Definition~\ref{def:K0M}. In Section~\ref{sec:spGys}, we recall the construction of specialization maps and Gysin morphisms in $G$-theory, and give explicit computations at the level of $G_0$. In Section~\ref{sec:funcKBM} we define the Borel-Moore $K$-motive in $\DK$ and study its functorialities; in Section~\ref{sec:DKBM} we show that over a regular base scheme, the associated Borel-Moore theory is $G$-theory (Theorem~\ref{thm:Grep}), and compare the functorialities on both sides. In Section~\ref{sec:Chowwt} we use all the results above to prove the main theorem.

\section*{Acknowledgments}

The authors would like to thank Denis-Charles Cisinski, Fr\'ed\'eric D\'eglise and Qizheng Yin for helpful discussions. The authors are supported by the National Key Research and Development Program of China Grant Nr.2021YFA1001400 and the National Natural Science Foundation of China Grant Nr.12471014.

\section{Preliminaries}

Throughout the paper, all schemes are assumed noetherian, quasi-excellent of finite Krull dimension. All morphisms of schemes are assumed separated of finite type. A morphism $X\to Y$ is \textbf{projective} if it factors as $X\xrightarrow{i}\mathbb{P}^n_Y\xrightarrow{p}Y$ with $i$ a closed immersion and $p$ the canonical projection. A morphism is \textbf{quasi-projective} if it factors as an open immersion followed by a projective morphism.
A morphism is \textbf{local complete intersection} or \textbf{lci} if it factors as a regular closed immersion followed by a smooth morphism, see \cite[\S6.6]{Ful}.
If $X$ is a scheme, we denote by $\operatorname{char}(X)$ the set of all primes $p=\operatorname{char}(k(x))$ for some point $x\in X$. For example, this is an empty set if and only if $X$ has characteristic $0$.

%Let us recall some standard definitions from \cite{BoL16}.  We say that $X$ is a \emph{retract} of $Y$ if there exist morphisms $f\colon X\to Y$ and $g\colon Y\to X$ such that $g\circ f=\id_X$. In an additive category, $X$ is a retract of $Y$ if and only if $X$ is a direct summand of $Y$.

%For categories $\mathcal C$ and $\mathcal D$, we write $\mathcal D\subset\mathcal C$ if $\mathcal D$ is a full subcategory. For a full subcategory $\mathcal D\subset\mathcal C$, we say that $\mathcal D$ is \emph{Karoubi-closed} (or \emph{idempotent complete}) in $\mathcal C$ if it contains all retracts of its objects in $\mathcal C$. The smallest Karoubi-closed subcategory of $\mathcal C$ containing $\mathcal D$ is the \emph{Karoubi closure} of $\mathcal D$ in $\mathcal C$.

%The \emph{Karoubi envelope} (or \emph{idempotent completion}) $\Kar(\mathcal D)$ of an additive category $\mathcal D$ is defined as follows: its objects are pairs $(X,e)$, where $X\in\mathcal D$ and $e\in\mathcal D(X,X)$ is an idempotent; morphisms from $(X,e)$ to $(Y,e')$ are morphisms $f\colon X\to Y$ in $\mathcal D$ such that $f=e'\circ f\circ e$.

For a stable $\infty$-category $\mathcal C$ and objects $X,Y\in\mathcal C$, we denote by $\operatorname{Map}_{\mathcal C}(X,Y)$ the mapping spectrum. We write $X\perp Y$ if $\operatorname{Map}_{\mathcal C}(X,Y)=*$. An object $M\in\mathcal C$ is called \emph{compact} if the functor $\operatorname{Map}_{\mathcal C}(M,-)$ commutes with all small coproducts. Throughout the paper, we say a diagram in an $\infty$-category is commutative if it commutes in the homotopy category, which means that it commutes up to homotopy.

%For a scheme $S$, define
%as the $\infty$-category of modules over the $\mathbb{E}_\infty$-ring spectrum $\KGL_S$.  and $\one_S(n)$ the Tate objects (\cite[Def. 2.4.17]{CD}).

%Let $\mathcal C$ be additive. For $X,Y\in\Obj\mathcal C$,  $\mathcal C(X,Y)=0$.  %(only small coproducts are considered here).

\begin{lemma}[See also \textrm{\cite[Exp. VII, \S2]{SGA6}}]
\label{lm:Tortens}
Let $A$ be a ring and let $B,C$ be two $A$-algebras. Let $M$ be a $B$-module and let $N$ be a $C$-module. 
\begin{enumerate} 
\item
For any $q\geqslant0$, the $A$-module
 %   $A ,B,C$ are commutative rings,moreover we have ring morphisms from A to B nad C,M is a B-module,N is an C-module, %then 
$\operatorname{Tor}_q^{A}(M,N)$ has a natrual $B\otimes_A C$-module structure. 

\item
If $A$ is regular, then there exists $m>0$ such that for any $q>m$ and any $M,N$ we have $\operatorname{Tor}_q^{A}(M,N)=0$.

\item
If $A$ is regular, $B,C$ are finitely generated $A$-algebras, and $M,N$ are finitely generated modules over $B$ and $C$ respectively, then $\operatorname{Tor}_q^{A}(M,N)$ is a finitely generated $B\otimes_A C$-module. %, and vanishes fo

\end{enumerate}

\end{lemma}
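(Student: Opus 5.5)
For (1), I will compute $\operatorname{Tor}^A_q(M,N)$ with a projective resolution $P_\bullet\to M$ of $M$ as an $A$-module. Then $\operatorname{Tor}^A_q(M,N)=H_q(P_\bullet\otimes_A N)$, and $C$ acts on each $P_i\otimes_A N$ through $N$. These actions commute with the differentials, so $H_q$ becomes a $C$-module. To get the $B$-action I will resolve $N$ instead and use the symmetric description $H_q(M\otimes_A Q_\bullet)$. Alternatively, for $b\in B$ I will use the map $\operatorname{Tor}^A_q(b\cdot,N)$ induced by functoriality in the first variable, applied to the $A$-linear endomorphism $m\mapsto bm$ of $M$. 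Functoriality in each variable makes $B$ and $C$ act by $A$-linear endomorphisms. The two actions commute by bifunctoriality: the map induced by $(b\cdot,c\cdot)$ equals both composites. So we get an action of $B\otimes_A C$, and the $A$-structure it induces agrees with the original one because $A$ acts through either factor.

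For (2), "regular" here should mean noetherian of finite Krull dimension $d$, consistent with the standing conventions. By Serre's theorem every finitely generated module then has projective dimension at most $d$, so the global dimension of $A$ is $d$ by Auslander's theorem. Every $A$-module $M$ therefore has a projective resolution of length $\le d$, and $\operatorname{Tor}^A_q(M,N)=0$ for $q>d$. We may take $m=d$; if the statement literally demands $m>0$ with $d=0$, then $m=1$ works.

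For (3), first suppose $A$ is noetherian, which is automatic for regular rings. Then $B$ and $C$ are noetherian, and so is $B\otimes_A C$, being a finitely generated $A$-algebra. Next, choose a resolution of $M$ by finite free $B$-modules $F_\bullet\to M$, which is possible because $B$ is noetherian. The issue is that $F_i$ is not $A$-flat, so $H_q(F_\bullet\otimes_A N)$ does not compute Tor in general. To fix this, I will compute $\operatorname{Tor}^A(M,N)$ by the derived tensor product $M\otimes^{L}_A N$. Its homology can be obtained from a simplicial or bar-type resolution, or more simply by base change: since $M\otimes_A^L N\simeq M\otimes^L_B (B\otimes^L_A N)$, I need the homology of $B\otimes^L_A N$ to be finitely generated over $B\otimes_A C$.

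To handle $B\otimes^L_A N$, I will write $B$ as a quotient of a polynomial ring $A[x_1,\dots,x_n]$, which is flat over $A$. Then
\[
B\otimes^L_A N\simeq B\otimes^L_{A[x]} (A[x]\otimes_A N)=B\otimes^L_{A[x]}N[x].
\]
Here $N[x]$ is a finitely generated module over the noetherian ring $C[x]$, and $B$ has a resolution by finite free $A[x]$-modules. Hence each $\operatorname{Tor}^{A[x]}_q(B,N[x])$ is a finitely generated $C[x]$-module, annihilated by the ideal $\ker(A[x]\to B)$. It is therefore finitely generated over $C[x]/I C[x]=B\otimes_A C$.

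The same argument, with $M$ in place of $B$, gives the full statement directly: $\operatorname{Tor}^A_q(M,N)\cong\operatorname{Tor}^{A[x]}_q(M,N[x])$ by flat base change. Then I resolve $M$ by finite free $A[x]$-modules, which is possible because $A[x]$ is noetherian and $M$ is finite over $B$. Each term of $F_\bullet\otimes_{A[x]}N[x]$ is then a finite $C[x]$-module, so the homology is finite over $C[x]$. It is killed by $I$, so it is finite over $B\otimes_A C$, and one checks that this module structure agrees with the one from (1).

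The main care point is exactly this compatibility of the module structures together with the flat base change isomorphism. Regularity is only used through noetherianity in (3), and through finite global dimension in (2).
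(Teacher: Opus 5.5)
Your proposal is correct. For (1) and (2) it follows the paper's route. The paper also lets $b\in B$ act through a lift of multiplication by $b$ to a projective $A$-resolution $P_\bullet\to M$ (your ``functoriality in the first variable''), with $C$ acting through $N$. For (2) the paper just says that over a regular ring one may take $P_\bullet$ of finite length; your appeal to Serre's and Auslander's theorems makes explicit that this needs the standing finite Krull dimension hypothesis.

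For (3) you genuinely depart from the paper, and your route is the more complete one. The paper asserts that (3) also follows from the existence of a finite-length projective resolution of $M$ over $A$. That alone gives no finiteness, because $M$ is in general not finitely generated over $A$ (e.g.\ $B=M=A[x]$). The terms $P_i\otimes_A N$ are then not finite over anything, and the $B$-action exists only up to homotopy on $P_\bullet$.

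Your argument supplies what is needed:
\begin{enumerate}
\item the flat base change $\operatorname{Tor}^A_q(M,N)\cong\operatorname{Tor}^{A[x_1,\dots,x_n]}_q(M,N[x])$;
\item a resolution of $M$ by finite free $A[x]$-modules, so the resulting complex consists of finite $C[x]$-modules with $C[x]$-linear differentials;
\item the annihilation of the homology by $I=\ker(A[x]\to B)$, which gives finiteness over $C[x]/IC[x]=B\otimes_A C$.
\end{enumerate}
As you observe, this uses only that $A$ is noetherian. Regularity enters only through (2), i.e.\ the finiteness of the alternating sums in Definition~\ref{def:G0prod}.

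Two minor points. The detour through $M\otimes^{\mathbf L}_B(B\otimes^{\mathbf L}_A N)$ is unnecessary once you run the argument directly with $M$. The agreement of the resulting $B\otimes_A C$-structure with the one from (1) is routine: the base-change isomorphism is natural in both variables, and multiplication by $b$ on $M$ is multiplication by any lift $\tilde b\in A[x]$.
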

\proof
Choose a projective (or flat) resolution $P_\bullet \to M$ of $M$ as an $A$-module. 
Since $M$ is a $B$-module, for every $b\in B$, the multiplication by $b$
\begin{align}
\label{eq:bmult}
\begin{split}
M&\to M\\
m&\mapsto bm.
\end{split}
\end{align}
is an $A$-linear endomorphism of $M$. Since $P_\bullet$ is a projective resolution of $M$, the endomorphism~\eqref{eq:bmult} lifts to a chain map
\begin{align}
b_\bullet:P_\bullet\to P_\bullet
\end{align}
which is unique up to chain homotopy. It follows that $B$ and hence $B\otimes_A C$ act on the homology groups
\begin{align}
H_i(P_\bullet\otimes_A N)=\operatorname{Tor}_i^A(M,N).
\end{align}
The last two claims follow from the fact that if $A$ is regular, we may choose $P_\bullet$ a finite projective resolution.
%\[
%m\longmapsto bm
%\]
%defines an . Because 
\endproof

\begin{lemma}
\label{lm:Torr}
Let $A$ be a ring and let $B$ be an $A$-algebra. Let $M,N$ be $B$-modules. There is a canon ical isomorphism
\begin{align}
\label{eq:Torr}
\operatorname{Tor}_q^{B\otimes_AB}(M\otimes_AN,B)
\simeq
\operatorname{Tor}_q^{B}(M,N).
\end{align}
\end{lemma}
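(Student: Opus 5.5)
The plan is to compute both sides from a single complex: a $B$-projective resolution of $M$, tensored over $A$ with $N$. First a caveat on hypotheses. As literally stated, the isomorphism~\eqref{eq:Torr} cannot hold in full generality. For $A=B$ we get $B\otimes_AB=B$, so the left-hand side vanishes for $q>0$, while $\operatorname{Tor}^B_q(M,N)$ need not. I will therefore prove it under the hypotheses that $B$ is flat over $A$ and $\operatorname{Tor}^A_q(M,N)=0$ for $q>0$ (for instance $M$ flat over $A$). Presumably these hold in the intended applications, e.g.\ $A$ a field.

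\emph{Step 1 (resolution of $M\otimes_AN$).} Choose a resolution $P_\bullet\to M$ by projective $B$-modules. Since $B$ is flat over $A$, each $P_i$ is flat over $A$. Hence $H_q(P_\bullet\otimes_AN)=\operatorname{Tor}^A_q(M,N)$, which vanishes for $q>0$ by hypothesis. So $P_\bullet\otimes_AN\to M\otimes_AN$ is a resolution by $B\otimes_AB$-modules, with $B\otimes_AB$ acting factorwise as in Lemma~\ref{lm:Tortens}(1).

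\emph{Step 2 (acyclicity).} This is the main point: $P_\bullet\otimes_AN$ need not be projective over $B\otimes_AB$, so I will show instead that its terms are $(-\otimes_{B\otimes_AB}B)$-acyclic, i.e.\ that
\[
\operatorname{Tor}^{B\otimes_AB}_q(P_i\otimes_AN,B)=0 \quad\text{for } q>0.
\]
By additivity and compatibility of $\operatorname{Tor}$ with direct sums, it suffices to treat a free summand $B$ of $P_i$. Write
\[
B\otimes_AN\simeq(B\otimes_AB)\otimes_BN,
\]
with $B$ acting through the second factor. Since $B\to B\otimes_AB$, $b\mapsto1\otimes b$, is a flat base change of $A\to B$, it is flat. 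Flat base change for $\operatorname{Tor}$ then gives
\[
\operatorname{Tor}^{B\otimes_AB}_q\bigl((B\otimes_AB)\otimes_BN,\,B\bigr)\simeq\operatorname{Tor}^B_q(N,B)=0 \quad\text{for } q>0.
\]
Therefore $\operatorname{Tor}^{B\otimes_AB}_\ast(M\otimes_AN,B)$ is computed by the complex $(P_\bullet\otimes_AN)\otimes_{B\otimes_AB}B$. This uses the standard fact that $\operatorname{Tor}$ may be computed with acyclic resolutions, proved by the usual dimension-shifting or double-complex argument.

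\emph{Step 3 (identification).} There is a natural isomorphism
\[
(P\otimes_AN)\otimes_{B\otimes_AB}B\simeq P\otimes_BN,
\]
given by $p\otimes n\otimes b\mapsto bp\otimes n$. It is obtained by imposing the relation identifying the two $B$-actions. This isomorphism is compatible with the differentials of $P_\bullet$, so the homology of the complex in Step~2 is $\operatorname{Tor}^B_q(M,N)$. Independence of the choice of $P_\bullet$ follows from the usual homotopy uniqueness of resolutions, which gives canonicity.

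I expect Step~2 to be the main obstacle, since it is where the flatness hypotheses enter essentially. Without them the argument, and indeed the statement, breaks down.
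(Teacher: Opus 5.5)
Your proof is correct under the hypotheses you add, and your caveat is justified. As stated, the lemma is false: your case $A=B$ (e.g.\ $B=\mathbb{Z}$, $M=N=\mathbb{Z}/2$, $q=1$) is a genuine counterexample. The paper's proof uses your hypotheses tacitly when it asserts that $R_\bullet=\operatorname{Tot}(P_\bullet\otimes_AQ_\bullet)$ is a resolution of $M\otimes_AN$; once the $P_i$ are $A$-flat, the homology of $R_\bullet$ is $\operatorname{Tor}^A_\ast(M,N)$. In the only application (Lemma~\ref{lm:DKG0prod}, with $A=k$ a field and $B=\mathcal{O}_S$), both hypotheses hold.

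The two routes differ in where the work sits. The paper resolves \emph{both} $M$ and $N$ by projective $B$-modules. Each $P_i\otimes_AQ_j$ is then a direct summand of a free $B\otimes_AB$-module, so $R_\bullet$ is an honest projective resolution. Then $R_\bullet\otimes_{B\otimes_AB}B\simeq\operatorname{Tot}(P_\bullet\otimes_BQ_\bullet)$ computes $\operatorname{Tor}^B_q(M,N)$ by balancing of $\operatorname{Tor}$.

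You resolve only $M$, so your terms $P_i\otimes_AN$ are not projective. You therefore need the acyclicity argument of Step~2 (flat base change along $b\mapsto 1\otimes b$), which is correct. In exchange, your final complex $P_\bullet\otimes_BN$ computes $\operatorname{Tor}^B(M,N)$ directly, without a double complex.

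So the step you flag as the main obstacle is an artifact of resolving only one factor. Resolving $N$ as well makes it disappear, and flatness of $B$ over $A$ is then needed only in its Step~1 role: identifying the homology of the resolution with $\operatorname{Tor}^A_\ast(M,N)$. What your version gains is that the necessary hypotheses are stated explicitly and visibly used.
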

\proof
Choose projective (or flat) resolutions $P_\bullet \to M$ and $Q_\bullet \to N$ of $M$ and $N$ as $B$-modules. Then $R_\bullet=\operatorname{Tot}(P_\bullet\otimes_AQ_\bullet)$ is a resolution of $M\otimes_AN$, and we have
\begin{align}
\operatorname{Tor}_q^{B\otimes_AB}(M\otimes_AN,B)
=
H_i(R_\bullet\otimes_{B\otimes_AB}B)
=
H_i(\operatorname{Tot}(P_\bullet\otimes_BQ_\bullet))
=
\operatorname{Tor}_q^{B}(M,N)
\end{align}
and the result follows.
\endproof

\section{Functoriality of algebraic $G$-theory}
\label{sec:funcG}
\subsection{Basic functorialities}
\begin{enumerate}
\item
For a scheme $X$, let $G(X)$ be the \textbf{algebraic $G$-theory spectrum} of $X$ (\cite[Def. 3.3]{TT}); if $Z$ is a closed subscheme of $X$, let $K(X\textrm{ on } Z)$ be the (non-connective) relative the algebraic $K$-theory spectrum, and $K(X)=K(X\textrm{ on } X)$.

\item
The algebraic $G$-theory groups are 
\begin{align}
G_i(X)=\pi_i(G(X)).
\end{align}
The group $G_0(X)$ agrees with the Grothendieck group of coherent sheaves on $X$.

\item
The spectrum $K(X)$ has a structure of $\mathbb{E}_{\infty}$-ring spectrum; the spectrum $G(X)$ has a structure of module over $K(X)$, with a map $K(X)\wedge G(X)\to G(X)$.

\item (\cite[3.21]{TT})
There is a canonical map
\begin{align}
\label{eq:KtoG}
K(X)\to G(X)
\end{align}
which is an equivalence when $X$ is regular. More generally, if $X$ is a regular scheme and $Z$ is a closed subscheme of $X$, we have an equivalence
\begin{align}
\label{eq:KsuppG}
K(X\textrm{ on } Z)\simeq G(Z).
\end{align}

\item (\cite[3.16.1]{TT})
If $f:X\to Y$ is a proper morphism, there is a push-forward map 
\begin{align}
\label{eq:Gpf}
f_*:G(X)\to G(Y).
\end{align}

\item (\cite[3.14.1]{TT})
If $f:X\to Y$ is a morphism of finite $\operatorname{Tor}$-dimension, there is a pullback map 
\begin{align}
\label{eq:Gpb}
f^*:G(Y)\to G(X).
\end{align}

\item (\cite[\S7 4.1]{Qui})
If $f$ is the projection of a vector bundle, the map~\eqref{eq:Gpb} is an equivalence.
\end{enumerate}
\begin{lemma}
\label{lm:GTor}
If $i:Z\to X$ is a regular closed immersion defined by an ideal sheaf $\mathcal{I}$, then the map~\eqref{eq:Gpb} induces the following map on the level of $G_0$:
\begin{align}
\label{eq:Gpbri}
\begin{split}
i^*:G_0(X)&\to G_0(Z)\\
[M]&\mapsto 
\sum_{q\ge 0}
(-1)^q
\Bigl[
\operatorname{Tor}^{\mathcal{O}_X}_q
\bigl(
M,
\mathcal{O}_X/\mathcal{I}
\bigr)
\Bigr].
\end{split}
\end{align}

\end{lemma}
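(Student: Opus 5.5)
Since $i$ is a regular closed immersion, it has finite Tor-dimension, so the pullback $i^*$ in~\eqref{eq:Gpb} is defined. By \cite[3.14]{TT} it comes from the exact functor $Li^*$ on pseudo-coherent complexes with bounded cohomology: a complex $E$ on $X$ is sent to the derived tensor product $Li^*E=\mathcal{O}_Z\otimes^L_{\mathcal{O}_X}E$, and this complex is cohomologically bounded on $Z$. The proof therefore reduces to two things: computing the class of a bounded pseudo-coherent complex in $G_0$, and identifying the cohomology sheaves of $Li^*M$.

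First, recall that on the Thomason--Trobaugh model, $G_0(Z)$ is the Grothendieck group of bounded pseudo-coherent complexes. Under the identification with the Grothendieck group of coherent sheaves, a complex $F$ with bounded coherent cohomology has class
\[
[F]=\sum_j(-1)^j[\mathcal{H}^j(F)].
\]
This follows from the standard truncation triangles $\tau_{\le j-1}F\to\tau_{\le j}F\to\mathcal{H}^j(F)[-j]$ together with additivity in $G_0$, by induction on the length of the cohomological amplitude.

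Second, for a coherent sheaf $M$ on $X$ we compute the cohomology of $Li^*M$ locally. The sheaves $\mathcal{H}^{-q}(\mathcal{O}_Z\otimes^L_{\mathcal{O}_X}M)$ are by definition $\operatorname{Tor}_q^{\mathcal{O}_X}(M,\mathcal{O}_X/\mathcal{I})$, viewed as $\mathcal{O}_Z$-modules via Lemma~\ref{lm:Tortens}(1). They are coherent because they are quasi-coherent and their stalks are finitely generated; one can check this with a finite free resolution below. They vanish for $q$ large. Indeed, locally $\mathcal{I}$ is generated by a regular sequence $f_1,\dots,f_r$, so the Koszul complex $K(f_1,\dots,f_r)$ is a finite free resolution of $\mathcal{O}_X/\mathcal{I}$. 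Hence $\operatorname{Tor}_q$ can be computed as the homology of $K(f)\otimes M$, which vanishes for $q>r$. So the sum in~\eqref{eq:Gpbri} is finite and well defined, and substituting into the first step gives the formula.

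The main obstacle I expect is bookkeeping rather than mathematics. One must make sure that the isomorphism $G_0(Z)\cong K_0(\mathrm{Coh}(Z))$ used in the paper is compatible with Thomason's complex-level definition of $i^*$. This is the comparison of \cite[3.13]{TT}, which is induced by the inclusion of coherent sheaves as complexes concentrated in degree $0$. Under it, the Euler characteristic formula of the first step holds.
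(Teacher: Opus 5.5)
Your proposal is correct and follows the paper's route. The paper's proof is the one-line observation that $i^*$ on $G$-theory is induced by $\mathbf{L}i^*$ (\cite[3.14.1]{TT}) and that the alternating sum of the $\operatorname{Tor}$-sheaves is the class of $\mathbf{L}i^*M$; you spell out the Euler-characteristic identity in $G_0$ and the finiteness of the sum via the local Koszul resolution. One harmless slip: a quasi-coherent sheaf with finitely generated stalks need not be coherent (e.g.\ $\bigoplus_p\mathbb{Z}/p$ on $\operatorname{Spec}\mathbb{Z}$), but coherence follows directly from your local computation, since the $\operatorname{Tor}$-sheaves are the homology of the bounded complex of coherent sheaves $K(f_1,\dots,f_r)\otimes M$.
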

\proof
This follows from the fact that the right-hand side of~\eqref{eq:Gpbri} computes the class $[\operatorname{L}i^*M]$, see \cite[3.14.1]{TT}.
\endproof

%There is a canonical map $K(X)\to G(X)$, which is an equivalence if $X$ is regular. 
If $X$ is regular and $Z$ is a closed subscheme of $X$, then there is a canonical equivalence 
\begin{align}
G(Z)\simeq K(X\textrm{ on } Z).
\end{align}

There are fiber sequences 
\begin{align}
K(X\textrm{ on } Z)\to K(X)\to K(X-Z)
\end{align}
\begin{align}
\label{eq:fibG}
G(Z)\to G(X)\to G(X-Z).
\end{align}

\subsection{Product on $G_0$}

In comparison with \cite[\S1.10, \S20.2]{Ful}, one should expect to define products on $G$-theory over a regular base scheme. This can be achieved using formal methods, see \cite[2.2.7]{DJK} or~\eqref{eq:DKprod} below, but we are not aware of a direct definition for higher $G$-theory in the literature. In this paper we restrict to products on the level of $G_0$, which is sufficient for our main results.
%While it is possible to define  (, although we are not aware of )

\begin{definition}
\label{def:G0prod}
Let $S$ be a regular scheme and let $X,Y$ be two $S$-schemes of finite type. We define the product on $G_0$ as
\begin{align}
\label{eq:GproductTor}
\begin{split}
%\langle-,-\rangle\colon 
G_0(X)\times G_0(Y)&\longrightarrow G_0(X\times_SY)\\
([\mathcal F],[\mathcal G])&\mapsto\sum_{i\geq0}(-1)^i\bigl[\Tor_i^{\mathcal O_S}(\mathcal F,\mathcal G)\bigr]
\end{split}
\end{align}
which is well-defined by Lemma~\ref{lm:Tortens}.
\end{definition}
If $S$ is the spectrum of a field, the map~\eqref{eq:GproductTor} is simply given by $([\mathcal F],[\mathcal G])\mapsto[\mathcal F\boxtimes_k\mathcal G]$.

\subsection{The category of $K_0$-motives}

We recall the following definition from \cite[5.4]{GiS09}:
\begin{definition}
\label{def:K0M}
Let $S$ be a noetherian scheme. The category $\KC(S)$ of \textbf{$K_0$-correspondences over $S$} is defined as follows:
\begin{itemize} 
\item
Objects of $\KC(S)$ are projective morphisms $f:X\to S$ with $X$ a regular scheme. 
\item
Given two objects $X$ and $Y$ of $\KC(S)$, morphisms from $X$ to $Y$ are given by
\begin{align}
Hom_{\KC(S)(X,Y)}:=G_0(X\times_SY).
\end{align}
Writing $\delta_{X/S}:X\to X\times_SX$ the diagonal morphism, the identity morphism is given by $\delta_{X/S,*}[\mathcal{O}_X]\in G_0(X\times_SX)$.
\item
Given three $X, Y$ and $Z$ of $\KC(S)$, the composition of morphisms is defined as the composition
\begin{align}
\label{eq:G0comp}
G_0(X\times_SY)\times G_0(Y\times_SZ)
\xrightarrow{\eqref{eq:GproductTor}}
G_0(X\times_S Y\times_S Z)
\xrightarrow{\eqref{eq:Gpf}}
G_0(X\times_SZ)
\end{align}
where
\begin{itemize}
\item
As in~\eqref{eq:GproductTor}, the first map of~\eqref{eq:G0comp} is given by
\begin{align}
\begin{split}
G_0(X\times_S Y)\times G_0(Y\times_S Z)&\xrightarrow{\eqref{eq:GproductTor}} G_0(X\times_S Y\times_S Z)\\
([\mathcal F],[\mathcal G])&\mapsto\sum_{i\geq0}(-1)^i\bigl[\Tor_i^{\mathcal O_Y}(\mathcal F,\mathcal G)\bigr]
\end{split}
\end{align}
since $X\times_S Y\times_S Z=(X\times_S Y)\times_Y(Y\times_S Z)$ is the fiber product over the regular scheme $Y$.

\item
The second map of~\eqref{eq:G0comp} is the proper push-forward along the projection $ X\times_SY\times_SZ\longrightarrow X\times_SZ$.

\end{itemize}
\end{itemize} 
If $\mathcal P$ is a set of prime numbers, we denote by $\KC(S)[\mathcal P^{-1}]$ the $\mathbb{Z}[\mathcal P^{-1}]$-linearization of $\KC(S)$. The category $\KM(S)[\mathcal P^{-1}]$ of \textbf{$K_0$-motives over $S$} is defined as the idempotent completion of $\KC(S)[\mathcal P^{-1}]$ (see \cite[Def. 2.4]{CH}, \cite[Def. 5.6]{GiS09}). 
\end{definition}

\begin{remark}
In the definition of classical Chow motives (see \cite[Def. 2.17]{CH}), one needs an extra step of formally inverting the Tate object $\mathbb{Z}(1)$; this step in unnecessary in $K_0$-motives, as unlike Chow groups, the projective bundle formula of $G_0$ groups imply that the Tate object is already invertible in $K_0$-motives, see Lemma~\ref{omega} below.
\end{remark}

\section{Specialization and Gysin morphisms}
\label{sec:spGys}
\subsection{The specialization map}
\label{num:spmap}
We now the specialization map on $G$-theory, following \cite[\S5.2, \S20.3]{Ful} and \cite[4.5.6]{DJK}. Let $i:Z\to S$ be a regular closed immersion with open complement $j:U\to S$, and assume that there is a null-homotopy $e(N_ZS)$ of the Euler class of the normal bundle of $i$ in $G(Z)$. Let $f:X\to S$ be a morphism and form the Cartesian square
\begin{align}%\label{Cart_diag}
\begin{split}
  \xymatrix@=10pt{
    X_Z \ar[r]^-{i_X} \ar[d]_-{} & X \ar[d]^-{} & X_U \ar[l]^-{} \ar[d]^-{}\\
    Z \ar[r]_-{i} & S & U. \ar[l]^-{}
  }
\end{split}
\end{align}
By~\eqref{eq:fibG} we have a fiber sequence
\begin{align}
G(X_Z)
\underset{\eqref{eq:Gpf}}{\xrightarrow{i_{X*}}}
G(X)
\xrightarrow{\eqref{eq:Gpb}}
G(X_U).
\end{align}
By excess intersection formula (\cite[Exp. VII Prop. 3.4]{SGA6}, \cite[Prop. 3.3.4]{DJK}), the composition
\begin{align}
G(X_Z)
\underset{\eqref{eq:Gpf}}{\xrightarrow{i_{X*}}}
G(X)
\underset{\eqref{eq:Gpb}}{\xrightarrow{i_{X}^*}}
G(X_Z)
\end{align}
is null-homotopic, and therefore the map $i_{Z}^*$ factors through a map
\begin{align}
\label{eq:Gcon}
G(X_U)
\to
G(X_Z).
\end{align}

\subsection{The specialization to the normal cone}

Let $i:Z\to X$ be a closed immersion defined by an ideal sheaf $\mathcal{I}$. Recall that the \textbf{deformation space} of $i$ is
\begin{align}
D_ZX
=
\operatorname{Spec}_{\mathcal{O}_X}\left(\mathbf{R}_{\mathcal{I}}\right)
=
\operatorname{Spec}_{\mathcal{O}_X}\left(\sum_{n\in\mathbb Z} \mathcal{I}^n t^{-n}\right).
\end{align}
Here $\mathbf{R}_{\mathcal{I}}=\sum_{n\in\mathbb{Z}} \mathcal{I}^n t^{-n}\subset \mathcal{O}_X[t,t^{-1}]$ is the Rees algebra, where for $n\leqslant0$ we put $\mathcal{I}^n=\mathcal{O}_X$. There is a Cartesian diagram
\begin{align}%\label{Cart_diag}
\label{eq:defdiag}
\begin{split}
  \xymatrix@=10pt{
     C_ZX \ar[r]^-{} \ar[d]_-{} & D_ZX \ar[d]^-{} & X\times\mathbb{G}_m \ar[l]^-{} \ar[d]^-{}\\
     0 \ar[r]_-{} & \mathbb{A}^1 & \mathbb{G}_m \ar[l]^-{}
  }
\end{split}
\end{align}
where $C_ZX$ is the \textbf{normal cone} of $i$:
\begin{align}
C_ZX
=\operatorname{Spec}_{\mathcal{O}_X}\left(
\bigoplus_{n\ge 0} \mathcal{I}^n/\mathcal{I}^{n+1}
\right)
=\operatorname{Spec}\left(\operatorname{gr}_{\mathcal{I}}(\mathcal{O}_X)\right).
\end{align}

%There is a canonical map $D_ZX\to\mathbb{A}^1$, where the fiber of $\mathbb{G}_m$ is $
%\operatorname{Spec}_{\mathcal{O}_X}\left(\sum_{n\in\mathbf Z} \mathcal{I}^n t^{-n}\right)[t^{-1}]\simeq\operatorname{Spec}_{\mathcal{O}_X}(\mathcal{O}_X[t,t^{-1}])=X\times\mathbf G_m$

\subsection{}
\label{num:spG}
Applying the construction in~\ref{num:spmap} to the diagram~\eqref{eq:defdiag}, we obtain a map
\begin{align}
\label{eq:Gcone}
G(X\times\mathbb G_m)
\xrightarrow{\eqref{eq:Gcon}}
G(C_ZX).
\end{align}

%Let $i_0:C_ZX\to D_ZX$ be the inclusion. By~\eqref{eq:fibG} we have a fiber sequence
%\begin{align}
%G(C_ZX)
%\underset{\eqref{eq:Gpf}}{\xrightarrow{i_{0*}}}
%G(D_ZX)
%\xrightarrow{\eqref{eq:Gpb}}
%G(X\times\mathbb G_m).
%\end{align}
%Since $i_0$ is the inclusion of a principal divisor, by excess intersection formula (\cite[Prop. 3.3.4]{DJK}), the composition
%\begin{align}
%G(C_ZX)
%\underset{\eqref{eq:Gpf}}{\xrightarrow{i_{0*}}}
%G(D_ZX)
%\underset{\eqref{eq:Gpb}}{\xrightarrow{i_{0}^*}}
%G(C_ZX)
%\end{align}
%is null-homotopic, and therefore the map $i_{0}^*$ factors through a map

\begin{definition}
\label{def:spcone}
The \textbf{specialization to the normal cone} is the composition
\begin{align}
\label{eq:spdef}
\operatorname{sp}:
G(X)
\xrightarrow{\eqref{eq:Gpb}}
G(X\times\mathbb G_m)
\xrightarrow{\eqref{eq:Gcone}}
G(C_ZX).
\end{align}
\end{definition}

\subsection{}
We now give another description, following \cite[3.2.3]{DJK}. Let %(see also \cite[Lemma 3.3.6]{Jin18}). Let 
\begin{align}
\label{eq:gamt}
\gamma_t:
G(X)
\xrightarrow{}
\Omega G(X\times\mathbb G_m)
\end{align}
be the multiplication by $t\in \Omega K(\mathbb{Z}[t,t^{-1}])$.

\begin{lemma}
\label{lm:spano}
The specialization map~\eqref{eq:spdef} is homotopic to the composition
\begin{align}
G(X)
\xrightarrow{\eqref{eq:gamt}}
\Omega G(X\times\mathbb G_m)
\xrightarrow{\partial}
G(C_ZX)
\end{align}
where $\partial$ is the boundary map 
induced by the fiber sequence~\eqref{eq:fibG}.
\end{lemma}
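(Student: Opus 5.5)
We unwind the map~\eqref{eq:Gcone} and show it is itself "multiply by the parameter, then take the boundary". Apply the construction of~\ref{num:spmap} to the diagram~\eqref{eq:defdiag}. Here $i:\{0\}\to\mathbb{A}^1$ is the zero section, a principal divisor cut out by the coordinate $t$. Its normal bundle is trivialized by $t$, so the Euler class of the normal bundle comes with a canonical null-homotopy. We will compare two maps $G(X\times\mathbb G_m)\to G(C_ZX)$, namely~\eqref{eq:Gcon} and $\partial\circ\gamma_t$. Precomposing with the pullback $G(X)\to G(X\times\mathbb{G}_m)$ then gives the lemma, since the pullback commutes with multiplication by $t$, which lies in $K$ of the base $\mathbb{Z}[t,t^{-1}]$. (Strictly, $\gamma_t$ on $G(X)$ factors as pullback followed by multiplication by $t$ on $G(X\times\mathbb G_m)$, using the $K(X\times\mathbb G_m)$-module structure.)

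\textbf{Step 1: the principal divisor case.} Let $D=D_ZX$ and let $i_0:C_ZX\to D$ be the inclusion of the divisor $\{t=0\}$. By the fiber sequence~\eqref{eq:fibG}, the map~\eqref{eq:Gcon} is the unique factorization of $i_0^*:G(D)\to G(C_ZX)$ through the cofiber of $i_{0*}$, which is $G(X\times\mathbb G_m)$. The factorization is fixed by the chosen null-homotopy of $i_0^*i_{0*}$. That null-homotopy comes from the Koszul resolution
\[
0\to\mathcal O_D\xrightarrow{t}\mathcal O_D\to\mathcal O_{C_ZX}\to0 .
\]
It expresses $i_0^*i_{0*}$ as multiplication by $1-[\mathcal{O}(-C_ZX)]$, and this vanishes because the section $t$ trivializes the line bundle.

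We then identify this factorization with $\partial\circ\gamma_t$, with $\partial$ the boundary $\Omega G(X\times\mathbb G_m)\to G(C_ZX)$. This is the standard fact (cf.\ \cite[3.2.3]{DJK}) that for a localization sequence $G(C)\to G(D)\to G(D-C)$ of a principal divisor $C=\{t=0\}$, the map $\partial\circ(\cdot\, t)$ restricted along $G(D)\to G(D-C)$ agrees with $i_0^*$. Concretely, $\partial(t)$ is the class $[\mathcal O_C]$ in $K(D\textrm{ on }C)$. One checks this by computing the boundary of the unit $t$ in the localization sequence $K(D\textrm{ on }C)\to K(D)\to K(D-C)$, again via the Koszul complex of $t$. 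The projection formula for the $K(D)$-module structure then gives
\[
\partial(t\cdot j^*m)=\partial(t)\cdot m=i_0^*m
\]
for $m\in G(D)$, where $j:X\times\mathbb G_m\to D$ is the open immersion.

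\textbf{Step 2: uniqueness.} Both $\partial\circ\gamma_t$ and~\eqref{eq:Gcon} restrict to $i_0^*$ along $j^*$. A factorization through a cofiber is unique once the null-homotopy of the composite with $i_{0*}$ is specified. It therefore remains to check that $\partial\circ\gamma_t\circ j^*\circ i_{0*}\simeq 0$ is realized by the same Koszul null-homotopy. This holds because $j^*i_{0*}$ is canonically null from the fiber sequence.

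The main obstacle is exactly this coherence of null-homotopies. Agreement on homotopy groups, or up to an unspecified homotopy, is not enough to pin down the factorization. The plan is to carry out the whole comparison in $K(D\textrm{ on }C)$-modules: there the Koszul complex of $t$ gives simultaneously the boundary of $t$ and the trivialization of the Euler class, so the two null-homotopies coincide by construction. This is as in \cite[3.2.3]{DJK}, whose argument is formal and applies verbatim to $G$-theory with its $K$-module structure.
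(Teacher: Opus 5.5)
Your proof is correct in substance, but it reaches the key point by a different route from the paper's. In the paper, the whole content is the homotopy-commutativity of the square \eqref{eq:Jin45}, i.e.\ $\partial\circ t\circ j^*\simeq i_0^*$ on $G(D_ZX)$. This is proved formally: following \cite[Prop.~2.30]{Jin16}, the double deformation to the normal cone reduces the divisor $C_ZX\subset D_ZX$ to the zero section $C_ZX\to C_ZX\times\mathbb{A}^1$ of a trivial line bundle, where the square is \cite[3.2.2]{DJK}. Your Step~1 proves the same square directly inside $G$-theory, using three inputs:
\begin{enumerate}
\item $\partial(t)=[\mathcal{O}_D\xrightarrow{t}\mathcal{O}_D]$ in $K(D_ZX\textrm{ on }C_ZX)$;
\item compatibility of the pairing $K\wedge G\to G$ with the localization sequences, which gives $\partial(t\cdot j^*m)=\partial(t)\cdot m$;
\item $\operatorname{Kos}(t)\otimes^{\mathbf L}M\simeq i_{0*}\mathbf{L}i_0^*M$, together with d\'evissage.
\end{enumerate}
This needs only that $t$ is a global non-zero-divisor on $D_ZX$ with $D_ZX[t^{-1}]=X\times\mathbb{G}_m$, and it avoids the double deformation space. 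It also makes the sign convention for $\partial$ explicit: you need $\partial(t)=+[\mathcal{O}_{C_ZX}]$, which the paper silently absorbs into \cite{DJK}. The cost is that you must invoke multiplicativity of localization sequences as a pairing of cofiber sequences. The paper's argument instead uses only functoriality, homotopy invariance and the trivial-bundle case, which is why it transfers verbatim from the motivic setting of \cite{Jin16}.

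Your Step~2 addresses something the paper passes over. A homotopy-commutative square determines the factorization \eqref{eq:Gcon} only up to adding $\varphi\circ\Sigma\partial$ for some $\varphi\colon\Sigma G(C_ZX)\to G(C_ZX)$. This is harmless on $G_0$, since $G_0(D_ZX)\to G_0(X\times\mathbb{G}_m)$ is surjective, but it is not automatic in higher degrees. However, your one-line reason, that $j^*i_{0*}$ is canonically null, does not settle it. What must be checked is this: take the Step~1 homotopy, whisker it by $i_{0*}$, and compose it with that canonical null-homotopy; the result must reproduce the trivialization of $i_0^*i_{0*}$ by $t$ that defines \eqref{eq:Gcon}. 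Your plan of doing the comparison in $K(D\textrm{ on }C)$-modules is the right place for this. As written, though, it is only a sketch, at the same level of rigor as the paper's own ``the result follows''.
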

\proof
The result follows from the commutative diagram
\begin{align}%\label{Cart_diag}
\label{eq:Jin45}
\begin{split}
  \xymatrix@=10pt{
    G(X\times\mathbb{G}_m)  \ar[d]_-{t} & G(D_ZX) \ar[l]_-{\eqref{eq:Gpb}} \ar[d]^-{i_0^*} \\
    \Omega G(X\times\mathbb G_m) \ar[r]^-{\partial} & G(C_ZX).
  }
\end{split}
\end{align}
associated to the inclusion of a principal divisor $i_0:C_ZX\to D_ZX$ and the open complement $X\times\mathbb{G}_m$. To prove the commutativity of diagram~\eqref{eq:Jin45}, one may apply the argument in \cite[Prop. 2.30]{Jin16}: indeed, using the double deformation to the normal cone, it suffices to prove the commutativity of the diagram
\begin{align}%\label{Cart_diag}
\label{eq:DJK322}
\begin{split}
  \xymatrix@=10pt{
    G(C_ZX\times\mathbb{G}_m)  \ar[d]_-{t} & G(C_ZX\times\mathbb{A}^1) \ar[l]_-{\eqref{eq:Gpb}} \ar[d]^-{i_0^*} \\
    \Omega G(C_ZX\times\mathbb G_m) \ar[r]^-{\partial} & G(C_ZX).
  }
\end{split}
\end{align}
which is the counterpart of diagram~\eqref{eq:Jin45} with the inclusion $i_0$ replaced by the zero section $C_ZX\to C_ZX\times\mathbb{A}^1$. Diagram~\eqref{eq:DJK322} commutes by \cite[3.2.2]{DJK}, which finishes the proof.
% we may replace  by the zero section of  given by 
\endproof

\subsection{}
We now give a concrete description at the level of $G_0$. In this case, the map $G_0(D_ZX)\to G_0(X\times\mathbb G_m)$ is surjective, so the map 
\begin{align}
G_0(X\times\mathbb G_m)\to G_0(C_ZX)
\end{align}
induced by~\eqref{eq:Gcone} is simply the map induced by $i_{0}^*:G_0(D_ZX)\xrightarrow{}G_0(C_ZX)$ on the cokernel $G_0(X\times\mathbb G_m)=\operatorname{coker}(G_0(C_ZX)\xrightarrow{i_{0*}}G_0(D_ZX))$, see \cite[Prop. 5.2]{Ful}.

Let $M$ be a coherent sheaf on $X$, with its class $[M]\in G_0(X)$. The map $G_0(X)\xrightarrow{\eqref{eq:Gpb}}G_0(X\times\mathbb G_m)$ sends $[M]$ to $[M[t,t^{-1}]]\in G_0(X\times\mathbb G_m)$, and the class of the Rees module
\begin{align}
\mathbf{R}_{\mathcal{I}}(M)
=
\sum_{n\in\mathbb{Z}}
\mathcal{I}^nMt^{-n}
\subset M[t,t^{-1}]
\end{align}
is a lift of $[M[t,t^{-1}]]$ to $G_0(D_ZX)$. By Lemma~\ref{lm:GTor}, it follows that
\begin{align}
\operatorname{sp}[M]
=
i_0^*[\mathbf{R}_{\mathcal{I}}(M)]
=
\sum_{q\ge 0}
(-1)^q
\Bigl[
\operatorname{Tor}^{\mathbf{R}_{\mathcal{I}}}_q
\bigl(
\mathbf{R}_{\mathcal{I}}(M),
\mathbf{R}_{\mathcal{I}}/t
\bigr)
\Bigr].
\end{align}
Using the resolution
\begin{align}
\mathbf{R}_{\mathcal{I}} 
\xrightarrow{\times t} 
\mathbf{R}_{\mathcal{I}} 
\xrightarrow{}
\mathbf{R}_{\mathcal{I}}/t
\end{align}
we obtain that for any $q\geqslant1$, $\operatorname{Tor}^{\mathbf{R}_{\mathcal{I}}}_q
\bigl(
\mathbf{R}_{\mathcal{I}}(M),
\mathbf{R}_{\mathcal{I}}/t
\bigr)=0$, and
\begin{align}
\operatorname{Tor}^{\mathbf{R}_{\mathcal{I}}}_0
\bigl(
\mathbf{R}_{\mathcal{I}}(M),
\mathbf{R}_{\mathcal{I}}/t
\bigr)
=
\operatorname{gr}_{\mathcal{I}}(M)
=
\bigoplus_{n\ge 0}
\mathcal{I}^nM/\mathcal{I}^{n+1}M.
\end{align}
We therefore obtain:
\begin{lemma}
\label{lm:spexp}
The specialization map $\operatorname{sp}:
G_0(X)
\xrightarrow{\eqref{eq:spdef}}
G_0(C_ZX)$ is given by
\begin{align}
\operatorname{sp}[M]
=
[\operatorname{gr}_\mathcal{I}(M)]
=
\Bigl[\bigoplus_{n\ge 0}
\mathcal{I}^nM/\mathcal{I}^{n+1}M\Bigl].
\end{align}
\end{lemma}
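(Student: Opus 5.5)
The plan is to evaluate the composite of Definition~\ref{def:spcone} at the level of $G_0$ in two steps. First, the pullback along $X\times\mathbb{G}_m\to X$ sends $[M]$ to $[M[t,t^{-1}]]$; this holds because the projection is flat, so the higher $\operatorname{Tor}$ terms in the pullback formula vanish. Second, the map~\eqref{eq:Gcone} has to be computed on $G_0$. Here one uses the localization sequence~\eqref{eq:fibG} for the principal divisor $i_0\colon C_ZX\to D_ZX$ with complement $X\times\mathbb{G}_m$. On $\pi_0$ this gives the exact sequence
\begin{align*}
G_0(C_ZX)\xrightarrow{i_{0*}}G_0(D_ZX)\to G_0(X\times\mathbb G_m)\to 0,
\end{align*}
so $G_0(X\times\mathbb G_m)$ is the cokernel of $i_{0*}$, and the factorization constructed in~\ref{num:spmap} is induced by $i_0^*$ on that cokernel. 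This is well defined because $i_0^*i_{0*}=0$ on $G_0$: the normal bundle of a principal divisor is trivial, and the excess intersection formula applies. Consequently, $\operatorname{sp}[M]=i_0^*[\widetilde M]$ for \emph{any} coherent sheaf $\widetilde M$ on $D_ZX$ whose restriction to $X\times\mathbb G_m$ is $M[t,t^{-1}]$.

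Next I choose an explicit lift. The natural choice is the Rees module $\mathbf{R}_{\mathcal I}(M)=\sum_n\mathcal I^nMt^{-n}$. It is coherent over $\mathbf{R}_{\mathcal I}$: it is generated by $M t^{0}$ together with finitely many generators of $\mathcal I$ placed in degree $t^{-1}$ acting on it, and the ring is noetherian. Inverting $t$ recovers $M[t,t^{-1}]$, since $\mathcal I^n M=M$ for $n\le 0$.

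Finally I compute $i_0^*$ using Lemma~\ref{lm:GTor}, which applies because $i_0$ is the regular immersion cut out by $t$. The element $t$ is a nonzerodivisor on $\mathbf{R}_{\mathcal I}$, so $\mathbf{R}_{\mathcal I}/t$ has the two-term free resolution given by multiplication by $t$. Tensoring this resolution with $\mathbf{R}_{\mathcal I}(M)$ shows the following.
\begin{itemize}
\item $\operatorname{Tor}_1$ is the $t$-torsion of $\mathbf{R}_{\mathcal I}(M)$. It vanishes because $\mathbf{R}_{\mathcal I}(M)$ sits inside $M[t,t^{-1}]$, where $t$ acts injectively; all higher $\operatorname{Tor}_q$ vanish for length reasons.
\item $\operatorname{Tor}_0=\mathbf{R}_{\mathcal I}(M)/t\,\mathbf{R}_{\mathcal I}(M)$. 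In degree $t^{-n}$ this quotient is $\mathcal I^nM/\mathcal I^{n+1}M$, because $t\cdot\mathcal I^{n+1}Mt^{-n-1}=\mathcal I^{n+1}Mt^{-n}$. For $n<0$ both pieces equal $M$, so only $n\ge0$ contributes, giving $\operatorname{gr}_{\mathcal I}(M)$.
\end{itemize}

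The main obstacle is the justification that the homotopy-theoretic map~\eqref{eq:Gcone} really agrees on $\pi_0$ with ``lift, then apply $i_0^*$''. This requires checking that the factorization through the cofiber is compatible with the exact sequence on $\pi_0$. It follows formally: by construction, the factored map precomposed with the restriction $G(D_ZX)\to G(X\times\mathbb G_m)$ is homotopic to $i_0^*$, and surjectivity on $\pi_0$ then determines the induced map. I would cite \cite[Prop. 5.2]{Ful} for the analogous Chow-group statement as a guide.
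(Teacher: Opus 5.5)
Your proposal is correct and follows essentially the paper's argument. You identify the map on $G_0$ as the one induced by $i_0^*$ on the cokernel of $i_{0*}$ (as in \cite[Prop. 5.2]{Ful}), lift $[M[t,t^{-1}]]$ by the Rees module $\mathbf{R}_{\mathcal{I}}(M)$, and evaluate $i_0^*$ via Lemma~\ref{lm:GTor} using the two-term resolution $\mathbf{R}_{\mathcal{I}}\xrightarrow{\times t}\mathbf{R}_{\mathcal{I}}$. Your extra justifications are details the paper leaves implicit: flatness of the projection, coherence and $t$-torsion-freeness of the Rees module, and the fact that surjectivity on $\pi_0$ determines the factored map.
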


%As an interesting consequence we obtain the following additivity property of the associated graded module in $G$-theory
%From  we deduce:
\begin{corollary}
\label{cor:grG}
Let
$
0\to M_1
\to M_2
\to M_3
\to 0
$
be a short exact sequence of $\mathcal{O}_X$-modules. Then in $G_0(C_ZX)=G_0(\operatorname{gr}_{\mathcal{I}}(\mathcal{O}_X))$ we have
\begin{align}
[\operatorname{gr}_{\mathcal{I}}(M_2)]
=
[\operatorname{gr}_{\mathcal{I}}(M_1)]
+
[\operatorname{gr}_{\mathcal{I}}(M_3)]
\end{align}
\end{corollary}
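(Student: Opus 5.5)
The corollary will follow from Lemma~\ref{lm:spexp} together with the fact that the specialization map $\operatorname{sp}\colon G(X)\to G(C_ZX)$ of Definition~\ref{def:spcone} is a map of spectra, so the induced map on $\pi_0$ is a group homomorphism $G_0(X)\to G_0(C_ZX)$. The plan is to apply this homomorphism to the additivity relation in $G_0(X)$.

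The steps are as follows. First, since $0\to M_1\to M_2\to M_3\to 0$ is a short exact sequence of coherent $\mathcal{O}_X$-modules, the defining relations of the Grothendieck group give
\begin{align}
[M_2]=[M_1]+[M_3] \quad\text{in } G_0(X).
\end{align}
Second, $\operatorname{sp}$ is the composite of the pullback~\eqref{eq:Gpb} along $X\times\mathbb{G}_m\to X$ and the map~\eqref{eq:Gcone}. Both are maps of spectra, so $\pi_0(\operatorname{sp})$ is additive. Third, applying $\operatorname{sp}$ to the relation and substituting $\operatorname{sp}[M_k]=[\operatorname{gr}_{\mathcal I}(M_k)]$ for $k=1,2,3$ from Lemma~\ref{lm:spexp} yields the claimed identity in $G_0(C_ZX)$. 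Here $G_0(C_ZX)$ is identified with the Grothendieck group of coherent $\operatorname{gr}_{\mathcal I}(\mathcal O_X)$-modules, since $C_ZX=\operatorname{Spec}_{\mathcal O_X}(\operatorname{gr}_{\mathcal I}(\mathcal O_X))$ is affine over $X$.

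The only point needing care is that Lemma~\ref{lm:spexp} is a well-defined formula on classes. Its derivation computes $\operatorname{sp}$ via a chosen lift, the Rees module $\mathbf{R}_{\mathcal I}(M)$, of $[M[t,t^{-1}]]$, and then applies $i_0^*$. This is independent of the lift because $i_0^*\circ i_{0*}=0$ on $G_0$, which holds since $t$ is a nonzerodivisor on $\mathcal O_{D_ZX}$: for a coherent sheaf $N$ on $C_ZX$, the sequence $0\to N\xrightarrow{t} N\to N\to 0$ is exact up to Tor terms, giving $[N]-[N]=0$. Lemma~\ref{lm:spexp} therefore genuinely computes the homomorphism $\pi_0(\operatorname{sp})$.

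Note that the corollary is not obvious directly. The sequence of associated graded modules $\operatorname{gr}_{\mathcal I}(M_1)\to\operatorname{gr}_{\mathcal I}(M_2)\to\operatorname{gr}_{\mathcal I}(M_3)$ need not be exact, because the filtration induced on $M_1$ is $M_1\cap\mathcal I^nM_2$, not $\mathcal I^nM_1$. The homomorphism property of $\operatorname{sp}$ is exactly what circumvents this.
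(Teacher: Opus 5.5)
Your proposal is correct and is the paper's own argument: the paper deduces the corollary from Lemma~\ref{lm:spexp} in exactly this way, by applying the additive map $\operatorname{sp}$ on $G_0$ to the relation $[M_2]=[M_1]+[M_3]$ in $G_0(X)$. Your lift-independence check is also fine, though its phrasing is loose: more precisely, $\mathbf{L}i_0^*i_{0*}N$ is computed by the complex $N\xrightarrow{t=0}N$, whose homology $N$ in degrees $0$ and $1$ gives $[N]-[N]=0$; in the paper this is already covered by the excess intersection formula used in~\ref{num:spmap}.
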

Corollary~\ref{cor:grG} is deduced from Lemma~\ref{lm:spexp} and is non-trivial, as the functor $gr_{\mathcal{I}}(-)$ is not exact.

\subsection{The Gysin morphism}

Let $i:Z\to X$ be a regular closed immersion defined by an ideal sheaf $\mathcal{I}$. Then the normal cone $C_ZX$ is canonically identified with the normal bundle $N_ZX$, with $\pi:N_ZX\to Z$ the projection. Following \cite[Example 5.2.1]{Ful} and \cite[Def. 2.18]{Jin16}, we define the \textbf{Gysin morphism} as the composition
\begin{align}
\label{eq:Gys}
G(X)
\underset{\eqref{eq:spdef}}{\xrightarrow{\operatorname{sp}}}
G(N_ZX)
\xrightarrow{(\pi^*)^{-1}}
G(Z).
\end{align}

\begin{lemma}%[See also \textrm{\cite[Lemma 3.3.6]{Jin18}}]
\label{lm:Gysinsp}
The Gysin morphism~\eqref{eq:Gys} is homotopic to the pullback map $i^*:G(X)\to G(Z)$ in~\eqref{eq:Gpb}.
\end{lemma}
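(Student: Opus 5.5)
The plan is to compare both maps through the deformation diagram~\eqref{eq:defdiag}, following the classical argument of \cite[Prop. 2.30]{Jin16} and \cite[3.2.3]{DJK}. Let $p:D_ZX\to X$ be the structure map and $i_0:N_ZX\to D_ZX$ the inclusion of the special fiber. By Lemma~\ref{lm:spano} and the commutative diagram~\eqref{eq:Jin45}, the restriction of $\operatorname{sp}$ along $G(D_ZX)\to G(X\times\mathbb{G}_m)$ is $i_0^*$. Hence the composition
\begin{align*}
G(X)\xrightarrow{p^*}G(D_ZX)\xrightarrow{i_0^*}G(N_ZX)
\end{align*}
agrees with $\operatorname{sp}$, because $p^*$ followed by restriction to $X\times\mathbb G_m$ is the flat pullback $G(X)\to G(X\times\mathbb G_m)$. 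Note that $p$ is flat: the Rees algebra $\mathbf R_{\mathcal I}$ is a torsion-free $\mathcal O_X[t]$-algebra, and its $\mathcal O_X$-flatness holds locally because $\mathcal I$ is generated by a regular sequence. Even without flatness, $p^*$ is defined once $p$ has finite Tor-dimension.

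Functoriality of pullbacks then gives $\operatorname{sp}\simeq (p\circ i_0)^*$. The composite $p\circ i_0:N_ZX\to X$ factors as
\begin{align*}
N_ZX\xrightarrow{\pi}Z\xrightarrow{i}X,
\end{align*}
since the special fiber maps onto $Z$ through the projection. Therefore $\operatorname{sp}\simeq \pi^*\circ i^*$, and composing with $(\pi^*)^{-1}$, which exists by homotopy invariance, yields the Gysin morphism $\simeq i^*$.

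The main obstacle is justifying $\operatorname{sp}\simeq (p\circ i_0)^*$ at the level of spectra rather than only on $\pi_0$. The specialization map was built as a factorization through a cofiber, which is only determined up to the chosen null-homotopy. I would therefore work with the description of Lemma~\ref{lm:spano}, $\partial\circ\gamma_t$, and show directly that $\partial\circ\gamma_t\circ j^*\simeq i_0^*$ on $G(D_ZX)$. This is exactly diagram~\eqref{eq:Jin45}, which was already established in the proof of Lemma~\ref{lm:spano}. Precomposing with $p^*$ gives the claim, since $j^*p^*$ is the flat pullback to $X\times\mathbb G_m$.

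A second point to check is the compatibility of pullbacks with composition, $(p\circ i_0)^*\simeq i_0^*p^*$ and $(i\circ\pi)^*\simeq \pi^*i^*$, for maps of finite Tor-dimension. This holds by \cite[3.14.1]{TT} whenever the relevant morphisms have finite Tor-dimension, which is the case here because $i$ is a regular immersion and the remaining maps are flat.

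As a sanity check on $G_0$: for a coherent $M$, Lemma~\ref{lm:spexp} gives $[\operatorname{gr}_{\mathcal I}M]$. Writing this as $\pi^*$ of $\sum(-1)^q[\operatorname{Tor}_q(M,\mathcal O_X/\mathcal I)]$, as in Lemma~\ref{lm:GTor}, is the usual Serre-type identity obtained from the Koszul resolution. It is consistent with the statement but not needed for the proof.
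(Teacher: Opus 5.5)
Your overall route is the paper's. There too one lifts along $D_ZX\to X$, written as $r\colon D_ZX\to X\times\mathbb{A}^1$ followed by the projection, which is your $p$. One then uses that the special fibre lies over $X$ via $N_ZX\xrightarrow{\pi}Z\xrightarrow{i}X$ to get $i_0^*p^*\simeq\pi^*i^*$. The conclusion follows because the map~\eqref{eq:Gcone} precomposed with the restriction $G(D_ZX)\to G(X\times\mathbb{G}_m)$ is $i_0^*$. That fact already holds by the construction in~\ref{num:spmap}, since the factorization comes with exactly this homotopy. So your detour through Lemma~\ref{lm:spano} and diagram~\eqref{eq:Jin45} is valid but not needed.

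The step that fails is your claim that $p$ is flat. As an $\mathcal{O}_X$-module, $\mathbf{R}_{\mathcal I}=\bigoplus_{n}\mathcal I^nt^{-n}$ has $\mathcal I t^{-1}$ as a direct summand, and $\mathcal I$ is not flat once the codimension is at least $2$. For instance, for $\mathcal I=(x,y)\subset k[x,y]$ one gets $D_ZX\cong\mathbb{A}^3\to\mathbb{A}^2$, $(t,u,v)\mapsto(tu,tv)$, whose fibre dimension jumps from $1$ to $2$ over the origin. Torsion-freeness in $t$ only says that $N_ZX\subset D_ZX$ is an effective Cartier divisor; it says nothing about flatness over $X$.

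This matters because you need three things: that $p^*$ exists, that $j^*p^*$ agrees with pullback along the projection $X\times\mathbb{G}_m\to X$ (with $j\colon X\times\mathbb{G}_m\to D_ZX$), and that $(p\circ i_0)^*\simeq i_0^*p^*$. Your justification for all three ultimately rests on ``the remaining maps are flat'', and the fallback ``finite Tor-dimension'' is only asserted. So the argument has a hole at its one non-formal geometric input.

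The paper fills it by noting that $D_ZX$ is open in $\operatorname{Bl}_{Z\times0}(X\times\mathbb{A}^1)$, a blow-up along a regularly embedded centre. Hence $r$ is lci by \cite[Tag 0BIQ]{Stack}, so $p$ is lci and of finite Tor-dimension. Alternatively, suppose $\mathcal I$ is locally generated by a regular sequence of length $c$. Then each $\mathcal I^n/\mathcal I^{n+1}$ is free over $\mathcal O_X/\mathcal I$, so every $\mathcal O_X/\mathcal I^n$ has projective dimension at most $c$. It follows that $\mathbf R_{\mathcal I}$ has Tor-dimension at most $c$ over $\mathcal O_X$. With either replacement your proof goes through.
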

\proof
%By Lemma~\ref{lm:spano}
The deformation space $D_ZX$ is an open subscheme of the blowup $\operatorname{Bl}_{Z\times0}X\times\mathbb{A}^1$ at a regularly embedded closed subscheme, and therefore there is a canonical morphism $r:D_ZX\to X\times\mathbb{A}^1$ which is lci by \cite[Tag 0BIQ]{Stack}. The morphism $r$ induces a Cartesian diagram
\begin{align}%\label{Cart_diag}
\begin{split}
  \xymatrix@=10pt{
    N_ZX \ar[r]^-{} \ar[d]_-{} & D_ZX \ar[d]^-{} & X\times\mathbb{G}_m \ar[l]^-{} \ar@{=}[d]^-{}\\
     X \ar[r]_-{} & X\times\mathbb{A}^1 & X\times\mathbb{G}_m \ar[l]^-{}
  }
\end{split}
\end{align}
where the left vertical map is the composition $N_ZX\xrightarrow{\pi}Z\xrightarrow{i}X$, see \cite[3.2.3]{DJK}. In particular we have a commutative diagram
\begin{align}%\label{Cart_diag}
%\label{eq:Jin45}
\begin{split}
  \xymatrix@=10pt{
    G(X\times\mathbb{G}_m) \ar@/^1pc/@{.>}[rr]^-{\eqref{eq:Gcone}} & G(D_ZX) \ar[l]_-{} \ar[r]_-{i_0^*} & G(N_ZX) \\
     & G(X) \ar[r]^-{i^*} \ar[lu]^-{\eqref{eq:Gpb}} \ar[u]_-{r^*} & G(Z)  \ar[u]_-{\pi^*}.
  }
\end{split}
\end{align}
and the result follows from Definition~\ref{def:spcone}.
\endproof

\begin{remark}
We can explicitly check the validity of Lemma~\ref{lm:Gysinsp} at the level of $G_0$ .% of the Gysin morphism $i^*:G_0(X)\to G_0(Z)$ in~\eqref{eq:Gys}. 
Working locally, we assume that $X=\operatorname{Spec}(A)$ is affine, and $\mathcal{I}=(f_1,\dots,f_r)$ is generated by a regular sequence. Then $i:Z\to X$ factors as a composition of codimension-one regular immersions
\begin{align}
Z=Z_r\to Z_{r-1}\to\cdots\to Z_0=X
\end{align}
with $Z_j=\operatorname{Spec}(A/(f_1,\dots,f_j))$. So it suffices to consider the case where $\mathcal{I}=(f)$ is principal. Let $M$ be a finitely generated $A$-module. Then the pullback map~\eqref{eq:Gpb} gives
\begin{align}
i^*[M]
=
[M\xrightarrow{\times f}M]\in G_0(A/(f)).
\end{align}
On the other hand, the Gysin morphism~\eqref{eq:Gys} sends the class $[M]$ to the class of the Koszul complex
\begin{align}
K\!\left(
\bar f,
\bigoplus_{n\ge0}f^nM/f^{n+1}M
\right)
=
\left(\bigoplus_{n\ge0}
f^nM/f^{n+1}M
\xrightarrow{\times \bar f}
\bigoplus_{n\ge0}
f^nM/f^{n+1}M\right).
\end{align}
where $\bar f$ denotes the image of $f$ in $I/I^2$. To see that these two complexes have the same class in $G_0(A/(f))$, since they both have $H_0$ isomorphic to $M/fM$, it suffices to compare $H_1$ of both complexes. We distinguish the following cases:
\begin{enumerate}
\item[\textbf{Case 1.}]
If the multiplication by $f$ is injective on $M$, then the $H_1$ of both complexes vanish.
\item[\textbf{Case 2.}]
Assume that $f^nM=0$ for some $n\ge1$. Then in $G_0(A)$ we have
\begin{align}
[M]
=
[M/fM]
+
[fM/f^2M]
+\cdots+
[f^{n-1}M].
\end{align}
and from Corollary~\ref{cor:grG} we obtain
\begin{align}
[\operatorname{gr}_I(M)]
=
[\operatorname{gr}_I(M/fM)]
+\cdots+
[\operatorname{gr}_I(f^{n-1}M)].
\end{align}
Therefore it suffices to consider the case where $fM=0$, in which case both complexes have $H_1$ isomorphic to $M$.
\item[\textbf{General case.}]
In general, denote by 
\begin{align}
M[f^\infty]
=
\{m\in M\mid f^n m=0
\text{ for some }n\ge1\}
\end{align}
the $f$-power torsion submodule. Then in $G_0(A)$,we have $[M]=[M[f^\infty]]+[M/M[f^\infty]]$. The module $M[f^\infty]$ falls under Case~2, while $M/M[f^\infty]$ falls under Case~1. By Corollary~\ref{cor:grG}, the statement follows for $M$.
\end{enumerate}
\end{remark}

\begin{corollary}\label{two_koszul}
Assume that $\mathcal{I}=(f_1,\dots,f_r)$ is generated by a regular sequence, and denote by $\bar{f_i}$ the image of $f_i$ in $\mathcal{I}/\mathcal{I}^2$. Then the two Koszul complexes $K(f_1,...,f_r,M)$ and $K(\bar{f_1},...,\bar{f_r},gr_{\mathcal{I}}(M))$ have the same class in $G_0(Z)$.
\end{corollary}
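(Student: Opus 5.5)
The plan is to identify each Koszul complex with one of the two sides of Lemma~\ref{lm:Gysinsp}, evaluated at the level of $G_0$.

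\emph{First side.} Since $f_1,\dots,f_r$ is a regular sequence, the Koszul complex $K(f_1,\dots,f_r;\mathcal{O}_X)$ is a locally free resolution of $\mathcal{O}_X/\mathcal{I}=\mathcal{O}_Z$. Hence
\begin{align}
H_q\bigl(K(f_1,\dots,f_r,M)\bigr)\simeq\operatorname{Tor}_q^{\mathcal{O}_X}(M,\mathcal{O}_Z).
\end{align}
By Lemma~\ref{lm:GTor}, the class of $K(f_1,\dots,f_r,M)$ in $G_0(Z)$ is therefore $i^*[M]$.

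\emph{Second side.} Write $N_ZX=\operatorname{Spec}(\operatorname{Sym}(\mathcal{I}/\mathcal{I}^2))$. In the regular case $\operatorname{gr}_{\mathcal{I}}(\mathcal{O}_X)=\mathcal{O}_Z[\bar f_1,\dots,\bar f_r]$ is a polynomial ring. The zero section $s:Z\to N_ZX$ is then a regular immersion cut out by $\bar f_1,\dots,\bar f_r$, and $\pi\circ s=\mathrm{id}_Z$. Since $\pi^*$ is an isomorphism (homotopy invariance of $G$-theory), $(\pi^*)^{-1}=s^*$ on $G_0$. By Lemma~\ref{lm:spexp}, $\operatorname{sp}[M]=[\operatorname{gr}_{\mathcal{I}}(M)]$. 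Applying Lemma~\ref{lm:GTor} to $s$, together with the Koszul resolution of $\mathcal{O}_Z$ over $\operatorname{gr}_{\mathcal{I}}(\mathcal{O}_X)$, gives
\begin{align}
s^*[\operatorname{gr}_{\mathcal{I}}(M)]=\bigl[K(\bar f_1,\dots,\bar f_r,\operatorname{gr}_{\mathcal{I}}(M))\bigr].
\end{align}
This is exactly the Gysin morphism~\eqref{eq:Gys} applied to $[M]$.

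\emph{Conclusion.} Lemma~\ref{lm:Gysinsp} identifies the Gysin morphism with $i^*$, so the two classes coincide in $G_0(Z)$.

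The only point requiring care is the identification $(\pi^*)^{-1}=s^*$ together with the Koszul description of $s^*$. This is routine once one uses that $\bar f_1,\dots,\bar f_r$ is a regular sequence in the polynomial algebra $\operatorname{Sym}(\mathcal{I}/\mathcal{I}^2)$, with quotient $\mathcal{O}_Z$.

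If the statement is needed with $X$ not affine, the argument is unchanged: the hypothesis that $\mathcal{I}=(f_1,\dots,f_r)$ provides global generators, so the Koszul complexes are defined globally and Lemma~\ref{lm:GTor} applies directly.
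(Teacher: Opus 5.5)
Your proposal is correct and follows the paper's route: the corollary is stated there without a separate proof, as a consequence of Lemma~\ref{lm:Gysinsp}. You compute $i^*[M]$ via Lemma~\ref{lm:GTor} and the Koszul resolution of $\mathcal{O}_Z$, and the Gysin side via Lemma~\ref{lm:spexp} followed by $(\pi^*)^{-1}=s^*$ with the Koszul resolution over $\operatorname{gr}_{\mathcal{I}}(\mathcal{O}_X)$; this is the multi-generator version of the principal-ideal computation in the preceding Remark, and it makes explicit the identification of $(\pi^*)^{-1}$ that the paper leaves implicit.
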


\subsection{The Refined Gysin Morphism}

Let $i:Z\to X$ be a regular closed immersion defined by an ideal sheaf $\mathcal{I}$, and consider a Cartesian square
\begin{align}%\label{Cart_diag}
\begin{split}
  \xymatrix@=10pt{
    W \ar[r]^-{} \ar[d]_-{} & Y \ar[d]^-{f} \\ %\ar@{}[rd]|{\Delta} 
    Z \ar[r]^-{i} & X.
  }
\end{split}
\end{align}
Then we have $W\simeq\operatorname{Spec}_{\mathcal{O}_Y}(\mathcal{O}_Y/\mathcal{I})$.
Let $N'=N_ZX\times_XY$ and denote by $k:C_WY\to N'$ the canonical closed immersion (\cite[B.6.1]{Ful}) and $\pi:N'\to W$ the projection. Following \cite[\S6.2]{Ful} and \cite[Def. 2.31]{Jin16}, we define
\begin{definition}
\label{def:refGys}
The \textbf{refined Gysin morphism} is the composition
\begin{align}
\label{eq:refGys}
\operatorname{G}_{f}(i):
G(Y)
\underset{\eqref{eq:spdef}}{\xrightarrow{\operatorname{sp}}}
G(C_WY)
\underset{\eqref{eq:Gpf}}{\xrightarrow{k_*}}
G(N')
\xrightarrow{(\pi^*)^{-1}}
G(W).
\end{align}
\end{definition}

\subsection{}
It is straightforward to generalize the construction to the case where $i$ is lci, see \cite[\S6.6]{Ful}. %, which will not be needed in what follows.
We now give an alternative construction following \cite[Cor. 2.34]{Jin16}. Let $D'=D_ZX\times_XY$. We have a %$\operatorname{Tor}$-independent 
Cartesian diagram
\begin{align}\label{eq:CND}
\begin{split}
  \xymatrix@=10pt{
    C_WY \ar[r]^-{\alpha} \ar[d]_-{k} & D_WY \ar[d]^-{\gamma} & Y\times\mathbb{G}_m \ar@{=}[d]^-{} \ar[l]^-{} \\
    N' \ar[r]_-{\beta} \ar[d]^-{} & D' \ar[d]^-{} & Y\times\mathbb{G}_m \ar[l]^-{} \ar[d]^-{}\\
     0 \ar[r]_-{} & \mathbb{A}^1 & \mathbb{G}_m \ar[l]^-{}
  }
\end{split}
\end{align}
where $N'\to D'$ is also the inclusion of a principal divisor. Applying the construction in~\ref{num:spmap} to the lower part of diagram~\eqref{eq:CND}, % and proceeding as in~\ref{num:spG}, 
we obtain a map %similar to the map~
\begin{align}
\label{eq:Gcone2}
G(Y\times\mathbb G_m)
\xrightarrow{\eqref{eq:Gcon}}
G(N').
\end{align}
\begin{lemma}
The map~\eqref{eq:Gcone2} is homotopic to the composition
\begin{align}
G(Y\times\mathbb G_m)
\xrightarrow{\eqref{eq:Gcone}}
G(C_WY)
\xrightarrow{k_*}
G(N').
\end{align}
\end{lemma}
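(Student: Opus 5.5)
We use naturality of the construction in~\ref{num:spmap} with respect to proper push-forward along the vertical maps of diagram~\eqref{eq:CND}. The upper part of~\eqref{eq:CND} is a morphism of "principal divisor plus open complement" configurations over $\mathbb{A}^1$: the closed immersion $\gamma:D_WY\to D'$ restricts to $k$ over $0$ and to the identity over $\mathbb{G}_m$. Both squares are Cartesian, since $C_WY=D_WY\times_{\mathbb{A}^1}0$ and $N'=D'\times_{\mathbb{A}^1}0$.

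First, $\gamma$ is proper, so proper base change for $G$-theory gives a map of localization fiber sequences~\eqref{eq:fibG}:
\begin{align}
\begin{split}
  \xymatrix@=10pt{
    G(C_WY) \ar[r]^-{\alpha_*} \ar[d]_-{k_*} & G(D_WY) \ar[r] \ar[d]^-{\gamma_*} & G(Y\times\mathbb{G}_m) \ar@{=}[d] \\
    G(N') \ar[r]_-{\beta_*} & G(D') \ar[r] & G(Y\times\mathbb{G}_m).
  }
\end{split}
\end{align}
Second, the maps~\eqref{eq:Gcone} and~\eqref{eq:Gcone2} are built from $\alpha^*$, respectively $\beta^*$, together with null-homotopies of $\alpha^*\alpha_*$ and $\beta^*\beta_*$. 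Since $\alpha$ and $\beta$ are both pullbacks of $0\to\mathbb{A}^1$, the cleanest way to handle these homotopies is Lemma~\ref{lm:spano}. Each of the two maps becomes multiplication by $t\in\Omega K(\mathbb{Z}[t,t^{-1}])$ followed by the boundary map $\partial$ of the respective localization sequence. This holds verbatim for~\eqref{eq:Gcone2}, because the proof of Lemma~\ref{lm:spano} only uses that the closed piece is the principal divisor $t=0$.

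With this description the statement reduces to two facts. The first is that boundary maps are natural for the morphism of fiber sequences above, i.e. $k_*\circ\partial_{D_WY}\simeq\partial_{D'}\circ\mathrm{id}$, which holds because a map of fiber sequences induces a compatible map on boundaries. The second is that multiplication by $t$ on $G(Y\times\mathbb{G}_m)$ is the same map in both cases, which is trivial since the open part and the element $t$ are literally identical.

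The main obstacle is justifying that the boundary-map description of Lemma~\ref{lm:spano} applies to $N'\subset D'$. In the proof of Lemma~\ref{lm:spano}, diagram~\eqref{eq:Jin45} uses double deformation for $C_ZX\subset D_ZX$, and one must check that the argument only needs $t$ to cut out the divisor. If that is delicate, I would instead argue directly: the null-homotopy of $\beta^*\beta_*$ comes from the excess intersection formula, which is induced by the trivialization of $\mathcal{O}(N')|_{N'}$ given by $t$. Since $\gamma^*t=t$, this trivialization pulls back to the one for $C_WY\subset D_WY$. The projection formula $\beta^*\gamma_*\simeq k_*\alpha^*$ (Tor-independent base change, since $t$ is a nonzerodivisor on $\mathcal{O}_{D_WY}$) is then compatible with these null-homotopies, and the factored maps agree.

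As a sanity check on $G_0$, the Rees module $\mathbf{R}_{\mathcal{I}\mathcal{O}_Y}(\mathcal{O}_Y)$ lifts $[M[t,t^{-1}]]$ to $D_WY$ and hence, pushed forward, to $D'$. Its restriction to $t=0$ is $k_*\operatorname{gr}$ on both sides, consistent with Lemma~\ref{lm:spexp}.
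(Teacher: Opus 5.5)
Your fallback argument is the paper's proof. The paper observes that \eqref{eq:Gcone} and \eqref{eq:Gcone2} are obtained by factoring $\alpha^*$, resp.\ $\beta^*$, through the common cofiber $G(Y\times\mathbb{G}_m)$. It then invokes the base change identity $\beta^*\gamma_*\simeq k_*\alpha^*$ from \cite[3.18]{TT}. Your remark that both null-homotopies come from the same trivialization $t$ spells out what ``tracking back the definitions'' leaves implicit.

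One correction to your parenthetical justification, which applies equally to the citation in the paper: $t$ is a nonzerodivisor on $\mathcal{O}_{D_WY}$, but it need not be one on $\mathcal{O}_{D'}$. For $X=\operatorname{Spec}k[x]$, $Z=V(x)$ and $Y=Z$ one finds $D'=\operatorname{Spec}k[t,u]/(tu)$, with $N'=V(t)$ an irreducible component. So $\beta$ has infinite Tor-dimension and the square is not Tor-independent. The identity survives if $\beta^*$ is read as the refined pullback $-\otimes^{\mathbf L}_{\mathcal{O}_{\mathbb{A}^1}}\mathcal{O}_0$. Then $\beta^*\gamma_*\simeq k_*\alpha^*$ is just the $\mathcal{O}_{\mathbb{A}^1}$-linearity of $\gamma_*$.

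Your main route is genuinely different. It trades the base change formula for the purely formal naturality of boundary maps under the morphism of localization sequences induced by $\gamma_*$. The price lies in what Lemma~\ref{lm:spano} actually provides. It identifies $\operatorname{sp}$ with $\partial\circ\gamma_t$, and $\operatorname{sp}$ is \eqref{eq:Gcone} precomposed with the pullback $G(Y)\to G(Y\times\mathbb{G}_m)$. So your route proves $k_*\circ\operatorname{sp}\simeq\operatorname{sp}'$, which is Corollary~\ref{cor:spnew} and is all that is used later. It does not prove the stated homotopy of maps out of $G(Y\times\mathbb{G}_m)$. For that you would need \eqref{eq:Gcone}${}\simeq\partial\circ(t\cdot -)$ on the whole cofiber. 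Diagram~\eqref{eq:Jin45} only gives this after precomposing with $G(D_WY)\to G(Y\times\mathbb{G}_m)$, and a map out of a cofiber is not determined by that restriction. The remaining ambiguity is exactly the null-homotopy the route was meant to avoid.

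Also, the transfer of Lemma~\ref{lm:spano} to $N'\subset D'$ is not verbatim. In the example above $N'$ is not an effective Cartier divisor, so the double deformation does not reduce to~\eqref{eq:DJK322}. The identity $\partial\circ\gamma_t\simeq\operatorname{sp}'$ does still hold on $G_0$: both sides are given by the class of $M'\xrightarrow{t}M'$ with $M'=M\otimes_{\mathcal{O}_Y}\mathcal{O}_{D'}$. In general, however, it needs its own argument.
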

\proof
Tracking back the definitions, the result follows from the following commutative diagram:
\begin{align}
\begin{split}
  \xymatrix@=10pt{
    G(D_WY) \ar[r]^-{\alpha^*} \ar[d]_-{\gamma_*} & G(C_WY) \ar[d]^-{k_*} \\
    G(D') \ar[r]^-{\beta^*} & G(N')
  }
\end{split}
\end{align}
which is \cite[3.18]{TT}.
\endproof
\begin{corollary}
\label{cor:spnew}
Denoting by
\begin{align}
\label{eq:spnew}
\operatorname{sp}':
G(Y)
\xrightarrow{\eqref{eq:Gpb}}
G(Y\times\mathbb G_m)
\xrightarrow{\eqref{eq:Gcone2}}
G(N').
\end{align}
the analogue of the specialization map~\eqref{eq:spdef}, the refined Gysin morphism~\eqref{eq:refGys} is homotopic to the compositon
\begin{align}
G(Y)
\underset{\eqref{eq:spnew}}{\xrightarrow{\operatorname{sp}'}}
G(N')
\xrightarrow{(\pi^*)^{-1}}
G(W).
\end{align}
\end{corollary}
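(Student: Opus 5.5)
The corollary follows by assembling the lemma just proved with Definition~\ref{def:refGys}; the only real work is checking that the ``analogue of the specialization map'' is literally the first map of~\eqref{eq:refGys} composed with $k_*$. By Definition~\ref{def:spcone}, applied to the closed immersion $W\to Y$, the map $\operatorname{sp}:G(Y)\to G(C_WY)$ is the composition of the pullback $G(Y)\to G(Y\times\mathbb{G}_m)$ along the projection with the map~\eqref{eq:Gcone}, the latter coming from the upper row of diagram~\eqref{eq:CND}. Hence
\begin{align}
k_*\circ\operatorname{sp}
=
k_*\circ\eqref{eq:Gcone}\circ\bigl(G(Y)\to G(Y\times\mathbb{G}_m)\bigr).
\end{align}

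Next I would invoke the preceding lemma, which identifies $k_*\circ\eqref{eq:Gcone}$ with~\eqref{eq:Gcone2} up to homotopy. Precomposing this homotopy with the pullback $G(Y)\to G(Y\times\mathbb{G}_m)$ gives $k_*\circ\operatorname{sp}\simeq\operatorname{sp}'$, where $\operatorname{sp}'$ is as in~\eqref{eq:spnew}. Postcomposing with $(\pi^*)^{-1}$ then gives
\begin{align}
\operatorname{G}_f(i)=(\pi^*)^{-1}\circ k_*\circ\operatorname{sp}\simeq(\pi^*)^{-1}\circ\operatorname{sp}'.
\end{align}
Here $(\pi^*)^{-1}$ is a well-defined homotopy inverse because $\pi:N'\to W$ is the projection of a vector bundle (the pullback of $N_ZX$ to $W$), so $\pi^*$ is an equivalence by homotopy invariance of $G$-theory.

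The only point needing care is that the two pullbacks $G(Y)\to G(Y\times\mathbb{G}_m)$ agree. In the upper row of~\eqref{eq:CND} this is the pullback along $D_WY\supset Y\times\mathbb{G}_m\to Y$, and in the middle row it is the pullback along $D'\supset Y\times\mathbb{G}_m\to Y$. The right-hand column of~\eqref{eq:CND} is the identity on $Y\times\mathbb{G}_m$, and in both cases the map to $Y$ is the projection, so the two pullbacks coincide. I do not expect any genuine obstacle: the substantive input, compatibility of proper pushforward with pullback along the principal divisors~\cite[3.18]{TT}, is already contained in the preceding lemma.
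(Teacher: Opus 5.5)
Your proof is correct and is the argument the paper intends; the paper states the corollary without proof, as an immediate consequence of the preceding lemma. You unwind Definitions~\ref{def:spcone} and~\ref{def:refGys}, precompose the homotopy $k_*\circ\eqref{eq:Gcone}\simeq\eqref{eq:Gcone2}$ with the pullback $G(Y)\to G(Y\times\mathbb{G}_m)$, and postcompose with $(\pi^*)^{-1}$, which is legitimate because $N'\to W$ is a vector bundle.
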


\subsection{}
We now give an explicit description of the refined Gysin morphism~\eqref{eq:refGys} on the level of $G_0$. If $f:X\to Y$ is a morphism and $M$ is a quasi-coherent sheaf on $Y$, we denote $M_{|X}=\mathbf Lf^*M$.
\begin{lemma}
\label{lm:refGysexp}
The refined Gysin morphism $\operatorname{G}_{f}(i):
G_0(Y)
\xrightarrow{\eqref{eq:refGys}}
G_0(W)$ sends the class of a coherent sheaf $M$ on $Y$ to
\begin{align}
\label{eq:refTor}
\operatorname{G}_{f}(i)[M]
=
[M\otimes^{\mathbf L}_{\mathcal{O}_X}\mathcal{O}_Z]
=
\sum_{q\ge0}
(-1)^q
\Bigl[
\operatorname{Tor}^{\mathcal{O}_X}_q(M,\mathcal{O}_X/\mathcal{I})
\Bigr]
\in G_0(W).
\end{align}
Here $\operatorname{Tor}^{\mathcal{O}_X}_q(M,\mathcal{O}_X/\mathcal{I})$ is a $\mathcal{O}_W$-module by Lemma~\ref{lm:Tortens}.
\end{lemma}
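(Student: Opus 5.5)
Since both sides of~\eqref{eq:refTor} are additive in $M$ and compatible with restriction to opens, the plan is to work locally: $X=\operatorname{Spec}(A)$, $\mathcal{I}=(f_1,\dots,f_r)$ generated by a regular sequence, $Y=\operatorname{Spec}(B)$, and $M$ a finitely generated $B$-module. Because $f_1,\dots,f_r$ is regular, the Koszul complex $K(f_1,\dots,f_r;A)$ is a free resolution of $A/\mathcal{I}$. Hence the right-hand side of~\eqref{eq:refTor} equals the class of the Koszul complex $K(f_1,\dots,f_r;M)$ in $G_0(W)$, where the $f_i$ act on $M$ through $A\to B$. The task is therefore to show that $\operatorname{G}_f(i)[M]$ is also the class of this Koszul complex.

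To compute $\operatorname{G}_f(i)[M]$, I will unwind Definition~\ref{def:refGys}. First, Lemma~\ref{lm:spexp} applied to the closed immersion $W\to Y$, whose ideal is $\mathcal{I}B$, gives $\operatorname{sp}[M]=[\operatorname{gr}_{\mathcal{I}B}(M)]\in G_0(C_WY)$. Next, $k_*$ regards this as a module over $\mathcal{O}_{N'}=B/\mathcal{I}B[T_1,\dots,T_r]$, where $T_i$ acts as $\bar f_i$ (surjecting onto $\operatorname{gr}_{\mathcal{I}B}(B)$). Finally, $N'\to W$ is the trivial bundle with zero section $s$. By homotopy invariance, $(\pi^*)^{-1}=s^*$, which is the Koszul complex on $T_1,\dots,T_r$ by Lemma~\ref{lm:GTor}. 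So
\begin{align}
\operatorname{G}_f(i)[M]=\bigl[K(\bar f_1,\dots,\bar f_r;\operatorname{gr}_{\mathcal{I}}(M))\bigr]\in G_0(W).
\end{align}

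It remains to compare $[K(\bar f;\operatorname{gr}_{\mathcal I}M)]$ with $[K(f;M)]$ in $G_0(W)$. This is exactly the content of Corollary~\ref{two_koszul}, except that the corollary is stated with $M$ a module over the ring $A$ in which the $f_i$ form a regular sequence. In our situation the $f_i$ need not be regular on $B$, and I expect this to be the main obstacle. I would handle it by running the argument of the preceding Remark with $B$ in place of $A$. That argument never used regularity on $M$: after factoring into successive principal steps, the case analysis (the $f$-torsion-free case, the $f$-nilpotent case, and dévissage via $M[f^\infty]$, using Corollary~\ref{cor:grG} for $\operatorname{Spec} B$) works for an arbitrary $f\in B$ and compares $[M\xrightarrow{f}M]$ with $[\operatorname{gr}_{(f)}M\xrightarrow{\bar f}\operatorname{gr}_{(f)}M]$.

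The delicate point is the reduction from $r$ generators to principal steps, because $\operatorname{gr}_{\mathcal{I}B}$ does not factor naively through the intermediate quotients when $\mathcal{I}B$ is not generated by a regular sequence. To get around this, I would instead use Corollary~\ref{cor:spnew}: compute $\operatorname{sp}'$ via the Rees module $\mathbf{R}_{\mathcal{I}}(A)\otimes_A M$ on $D'$. Its derived restriction to $N'$ is computed by $\operatorname{Tor}$ over $\mathbf{R}_{\mathcal I}(A)$, which is flat over $A[t]$. Then $s^*$ of this is $M\otimes^{\mathbf L}_A(\mathbf R_{\mathcal I}(A)/t)\otimes^{\mathbf L}_{\operatorname{gr}A}A/\mathcal I$. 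The last two factors compose to $\mathbf{L}$-restriction of the Rees algebra to the point $(t=0,\text{zero section})$, which is $A/\mathcal{I}$ because $\mathcal I$ is regular. This yields $[M\otimes^{\mathbf L}_A A/\mathcal I]$ directly, since the term $M[t,t^{-1}]$ lifts to the flat family $M\otimes_A\mathbf{R}_{\mathcal I}(A)$ over $D'$. This is the route I would try first.
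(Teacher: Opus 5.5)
Your final route differs from the paper's and can be made to work, but the step that carries all the weight is justified by a false claim.

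\textbf{The false step.} For the derived restriction of $M\otimes_A\mathbf{R}_{\mathcal I}(A)$ to $N'$ to equal $M\otimes^{\mathbf L}_A\operatorname{gr}_{\mathcal I}A$, you need $\mathbf{R}_{\mathcal I}(A)$ to be $A$-flat, so that $\otimes_A$ is already $\otimes^{\mathbf L}_A$. This fails as soon as $r\geq2$: $\mathcal{I}t^{-1}$ is an $A$-direct summand of $\mathbf{R}_{\mathcal I}(A)$, and $\mathcal{I}$ has projective dimension $r-1$. A fortiori $\mathbf{R}_{\mathcal I}(A)$ is not $A[t]$-flat, and $M\otimes_A\mathbf{R}_{\mathcal I}(A)$ need not be flat over $\mathbb{A}^1$.

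Concretely, take $A=k[x,y]$, $\mathcal{I}=(x,y)$, $B=M=k$. Multiplication by $t$ kills $\bigoplus_{n\geq1}(\mathcal{I}^n/\mathcal{I}^{n+1})t^{-n}$ inside $M\otimes_A\mathbf{R}_{\mathcal I}(A)=\mathbf{R}_{\mathcal I}(A)/\mathcal{I}\mathbf{R}_{\mathcal I}(A)$. So the cone of $t$ has $H^{-1}\cong(T_1,T_2)\subset k[T_1,T_2]$, whereas $k\otimes^{\mathbf L}_Ak[T_1,T_2]$ has $H^{-1}\cong k[T_1,T_2]^{2}$ and $H^{-2}\neq0$. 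Here $t$ is even a zero-divisor on $\mathcal{O}_{D'}$, so ``restriction to $N'$'' must be read as the refined map $[P]\mapsto[P\xrightarrow{t}P]$ of~\ref{num:spmap}. The two \emph{classes} in $G_0(N')$ do agree (both vanish here), but that requires an argument you have not given.

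\textbf{A repair.} What is actually true is only that $t$ is a nonzerodivisor on $\mathbf{R}_{\mathcal I}\subset\mathcal{O}_X[t,t^{-1}]$. The sheaves $\operatorname{Tor}^{\mathcal{O}_X}_q(\mathbf{R}_{\mathcal I},M)$ for $q>0$ have three properties:
\begin{enumerate}
\item they are coherent on $D'$;
\item they vanish for $q\geq r$, since the $\mathcal{I}^n$ have Tor-dimension at most $r-1$ uniformly in $n$;
\item they vanish after inverting $t$, because $\mathbf{R}_{\mathcal I}[t^{-1}]=\mathcal{O}_X[t,t^{-1}]$.
\end{enumerate}
Being $t$-power torsion, they are killed by the refined restriction: by d\'evissage reduce to sheaves $P$ killed by $t$, for which $[P\xrightarrow{0}P]=0$. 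Hence your lift $M\otimes_{\mathcal O_X}\mathbf{R}_{\mathcal I}$ and the complex $M\otimes^{\mathbf L}_{\mathcal O_X}\mathbf{R}_{\mathcal I}$ have the same refined restriction. The cone of $t$ on the latter is $M\otimes^{\mathbf L}_{\mathcal O_X}\operatorname{gr}_{\mathcal I}\mathcal{O}_X$.

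Since $\operatorname{gr}_{\mathcal I}\mathcal{O}_X\cong\operatorname{Sym}(\mathcal{I}/\mathcal{I}^2)$ is locally free over $\mathcal{O}_Z$ (this uses regularity of $i$), the cohomology sheaves of this complex are $\pi^*\operatorname{Tor}^{\mathcal O_X}_q(M,\mathcal{O}_Z)$. So $\operatorname{sp}'[M]=\pi^*[M\otimes^{\mathbf L}_{\mathcal O_X}\mathcal{O}_Z]$ in $G_0(N')$, and Corollary~\ref{cor:spnew} gives~\eqref{eq:refTor}. This also makes your last step immediate via $s^*\pi^*=\mathrm{id}$. You then need no associativity argument for Tor-modules that are $\mathcal{O}_{N'}$-modules only on homology.

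\textbf{The reduction to the affine case.} Drop it: an identity in $G_0(W)$ cannot be checked on an open cover. On $\mathbb{P}^1$, $[\mathcal{O}]$ and $[\mathcal{O}(1)]$ agree on both standard charts but are different classes. The argument above is global, so nothing is lost. Your Koszul route, however, is intrinsically local and would have had this defect on top of the multi-generator problem you identified.

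\textbf{Comparison with the paper.} The paper uses the same lift $M'=M\otimes_{\mathcal O_Y}\mathcal{O}_{D'}$ but never computes $M'_{|N'}$. It tensors $M'$ with the pullback of $l_*\mathcal{O}_{Z\times\mathbb{A}^1}$, where $Z\times\mathbb{A}^1\subset D_ZX$ is the closure of $Z\times\mathbb{G}_m$. It restricts the result to $W\times\mathbb{A}^1$ to obtain $\Gamma$ and uses $s_0^*[\Gamma]=s_1^*[\Gamma]$. Two Tor-independent base changes then identify the fibre at $0$ with $\operatorname{G}_f(i)[M]$, since $Z\times\mathbb{A}^1$ meets $N_ZX$ in the zero section, and the fibre at $1$ with $[M\otimes^{\mathbf L}_{\mathcal O_X}\mathcal{O}_Z]$. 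This deformation argument sidesteps exactly the Tor-behaviour of the Rees algebra that tripped you up. Your repaired route is more explicit and gives the sharper identity $\operatorname{sp}'[M]=\pi^*[M\otimes^{\mathbf L}_{\mathcal O_X}\mathcal{O}_Z]$ already in $G_0(N')$.
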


\proof
In the lower row of~\eqref{eq:CND}, let $M'$ be any coherent sheaf on $D'$ extending the sheaf $M[t,t^{-1}]$ on $Y\times\mathbb{G}_m$. For example, one may choose $M'=M\otimes_{\mathcal{O}_Y}\mathcal{O}_{D'}$. Then the map $\operatorname{sp}':G_0(Y)\to G_0(N')$ induced by~\eqref{eq:spnew} is such that
\begin{align}
\label{eq:splift}
\operatorname{sp}'[M]=[M'_{|N'}].
\end{align}
We have a Cartesian diagram
\begin{align}
\begin{split}
  \xymatrix@=10pt{
    W\times\mathbb{A}^1 \ar[r]^-{} \ar[d]_-{} & D' \ar[d]^-{} \\
    Z\times\mathbb{A}^1 \ar[r]^-{l} & D_ZX
  }
\end{split}
\end{align}
and let
\begin{align}
\Gamma
=\left(
(l_*\mathcal{O}_{Z\times\mathbb{A}^1})_{|D'}\otimes^{\mathbf L}_{\mathcal{O}_{D'}}M'
\right)_{|W\times\mathbb{A}^1}
\end{align}
as a sheaf on $W\times\mathbb{A}^1$. By $\mathbb{A}^1$-invariance of $G$-theory, the two sections $s_0,s_1:W\to W\times\mathbb{A}^1$ are such that $s_0^*[\Gamma]=s_1^*[\Gamma]\in G_0(W)$. We now compute $\mathbf Ls_0^*\Gamma$ and $\mathbf Ls_1^*\Gamma$ explicitly.

For $\mathbf Ls_0^*\Gamma$, we have a %$\operatorname{Tor}$-independent 
Cartesian diagram
\begin{align}%\label{Cart_diag}
\begin{split}
  \xymatrix@=10pt{
    W \ar[r]^-{\sigma} \ar[d]_-{s_0} & N' \ar[d]^-{} \\
    W\times\mathbb{A}^1 \ar[r]_-{} & D'
  }
\end{split}
\end{align}
where $\sigma$ is the zero section of $N'$. Therefore 
\begin{align}
\label{eq:Ls0a}
\begin{split}
\mathbf Ls_0^*\Gamma
=
\left(\left(
(l_*\mathcal{O}_{Z\times\mathbb{A}^1})_{|D'}\otimes^{\mathbf L}_{\mathcal{O}_{D'}}M'
\right)_{|N'}\right)_{|W}
=
\left(
(l_*\mathcal{O}_{Z\times\mathbb{A}^1})_{|N'}\otimes^{\mathbf L}_{\mathcal{O}_{N'}}M'_{|N'}
\right)_{|W}.
\end{split}
\end{align}
We also have a $\operatorname{Tor}$-independent Cartesian diagram
\begin{align}%\label{Cart_diag}
\begin{split}
  \xymatrix@=10pt{
      W \ar[d]_-{} \ar[r]^-{\sigma} & N' \ar[d]^-{} \\
      Z \ar[r]^-{} \ar[d]^-{} & N_ZX \ar[d]_-{}\\
      Z\times\mathbb{A}^1 \ar[r]^-{l} & D_ZX
  }
\end{split}
\end{align}
and by $\operatorname{Tor}$-independent base change we have
\begin{align}
\label{eq:Ls01}
(l_*\mathcal{O}_{Z\times\mathbb{A}^1})_{|N'}
\simeq
\sigma_*\mathcal{O}_{W}
\end{align}
and therefore combining~\eqref{eq:Ls0a} and~\eqref{eq:Ls01} we obtain
\begin{align}
\label{eq:Ls0b}
\mathbf Ls_0^*\Gamma
\simeq
\left(
\sigma_*\mathcal{O}_{W}\otimes^{\mathbf L}_{\mathcal{O}_{N'}}M'_{|N'}
\right)_{|W}
\simeq
(M'_{|N'})_{|W}.
\end{align}
By Corollary~\ref{cor:spnew},~\eqref{eq:splift} and~\eqref{eq:Ls0b}, we obtain
\begin{align}
\label{eq:s0rG}
s_0^*[\Gamma]
=
\operatorname{G}_{\Delta}(i)[M].
\end{align}

For $\mathbf Ls_1^*\Gamma$, we have a Cartesian diagram
\begin{align}%\label{Cart_diag}
\begin{split}
  \xymatrix@=10pt{
      W \ar[d]_-{\sigma_1} \ar[r]^-{} & Y \ar[d]^-{} \\
      W\times\mathbb{G}_m \ar[r]^-{} \ar[d]^-{} & Y\times\mathbb{G}_m \ar[d]_-{}\\
      W\times\mathbb{A}^1 \ar[r]^-{} & D'
  }
\end{split}
\end{align}
and therefore 
\begin{align}
\label{eq:Ls1a}
\begin{split}
\mathbf Ls_1^*\Gamma
=
\mathbf L\sigma_1^*\left(
(l_*\mathcal{O}_{Z\times\mathbb{A}^1})_{|D'}\otimes^{\mathbf L}_{\mathcal{O}_{D'}}M'
\right)_{|W\times\mathbb{G}_m}
%&=
%\mathbf L\sigma_1^*\left(
%(l_*\mathcal{O}_{Z\times\mathbb{A}^1})_{|Y\times\mathbb{G}_m}\otimes^{\mathbf L}_{\mathcal{O}_{Y\times\mathbb{G}_m}}M_{|Y\times\mathbb{G}_m}
%\right)_{|W\times\mathbb{G}_m}\\
%&
=
\left(
(l_*\mathcal{O}_{Z\times\mathbb{A}^1})_{|Y}\otimes^{\mathbf L}_{\mathcal{O}_{Y}}M
\right)_{|W}.
\end{split}
\end{align}
We also have a $\operatorname{Tor}$-independent Cartesian diagram
\begin{align}%\label{Cart_diag}
\begin{split}
  \xymatrix@=10pt{
      Z \ar[d]_-{} \ar[r]^-{i} & X \ar[d]^-{} \\
      Z\times\mathbb{G}_m \ar[r]^-{} \ar[d]^-{} & X\times\mathbb{G}_m \ar[d]_-{}\\
      Z\times\mathbb{A}^1 \ar[r]^-{l} & D_ZX
  }
\end{split}
\end{align}
and by $\operatorname{Tor}$-independent base change we obtain
\begin{align}
\label{eq:Ls1b}
(l_*\mathcal{O}_{Z\times\mathbb{A}^1})_{|Y}
=
(l_*\mathcal{O}_{Z\times\mathbb{A}^1})_{|X})_{|Y}
\simeq
(i_*\mathcal{O}_{Z})_{|Y}.
\end{align}
Combining~\eqref{eq:Ls1a} and~\eqref{eq:Ls1b}, we obtain
\begin{align}
\label{eq:s1ZL}
s_1^*[\Gamma]
=\Bigl[\left(
(i_*\mathcal{O}_{Z})_{|Y}\otimes^{\mathbf L}_{\mathcal{O}_{Y}}M
\right)_{|W}\Bigr]
=
[\mathcal{O}_Z\otimes^{\mathbf L}_{\mathcal{O}_{X}}M].
\end{align}
We conclude by combining~\eqref{eq:s0rG} and~\eqref{eq:s1ZL}.
\endproof

\section{Functoriality of Borel--Moore $K$-motives}
\label{sec:funcKBM}
We discuss basic properties of Borel--Moore $K$-motives in $\DK$ via the abstract six functors formalism (\cite{CD}).  We follow \cite{Jin16}, where results are stated in the category of cdh-motives $\mathbf{DM}_{cdh}$, but the main results remain valid in $\DK$. In addition, a remarkable feature of algebraic $K$-theory is the Bott Periodicity, which we describe now.

\subsection{Periodicity in $\DK$}
\label{num:regG}
Recall that for a scheme $S$, $\DK(S)$ is the $\infty$-category of modules over the $\mathbb{E}_\infty$-ring spectrum $\KGL_S$. We denote by $\one_S\in\DK(S)$ the unit object. By \cite[Prop. 2.18]{Cis13}, for any scheme $S$ and any smooth $S$-scheme $X$, there is an equivalence
\begin{align}
\label{eq:DKones}
\operatorname{Map}_{\DK(S)}(X_+,\one_S)
\simeq
KH(X).
\end{align}
\subsection{}
We will use the following periodicity of algebraic $K$-theory spectrum.
Let %$X$ be a scheme and let 
$\mathcal{E}$ be a vector bundle of rank $r$ over $S$. Recall that by \cite[Proposition 3.2.17]{MV} we have an isomorphism in $\DK(S)$
\begin{equation}
\label{Thom_proj_bundle}
\operatorname{Th}(\mathcal{E})
\simeq
\mathbb{P}(\mathcal{E}\oplus\mathcal{O}_S)/\mathbb{P}(\mathcal{E}).
\end{equation}
Let $x$ be the point of the $K$-theory spectrum $K(\mathbb{P}(\mathcal{E}\oplus\mathcal{O}_S))$ corresponding to the sheaf $\mathcal{O}(1)$, and denote by $\nu(\mathcal{E})$ the class of
\begin{equation}
\label{gen_bott_class}
\nu(\mathcal{E})=x^r-[\wedge^1\mathcal{E}]x^{r-1}+\cdots+(-1)^r[\wedge^r\mathcal{E}]
\end{equation}
considered as a point of the $K$-theory spectrum $K(\mathbb{P}(\mathcal{E}\oplus\mathcal{O}_S))$. Since the restriction of $\nu(\mathcal{E})$ to $\mathbb{P}(\mathcal{E})$ is null-homotopic, by~\eqref{eq:DKones}, the element $\nu(\mathcal{E})$ induces a map in $\DK(S)$
\begin{equation}
\label{eq:perK}
\operatorname{Th}(\mathcal{E})\to \one_S.
\end{equation}
\begin{lemma}[see also \textrm{\cite[Lemma 6.1.3.3]{Rio10}}]
\label{omega}
The map~\eqref{eq:perK} is an isomorphism.
\end{lemma}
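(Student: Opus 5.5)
The plan is to reduce to the case of a trivial bundle and then identify the map~\eqref{eq:perK} with a power of the Bott element. Both sides of~\eqref{eq:perK} are compatible with base change along any morphism $T\to S$. The cofiber sequence~\eqref{Thom_proj_bundle}, the class $x=[\mathcal{O}(1)]$ and the classes $[\wedge^i\mathcal{E}]$ are all stable under pullback, so $\nu(\mathcal{E})$ pulls back to $\nu(\mathcal{E}_T)$. Since $\DK$ satisfies Zariski (indeed Nisnevich) descent, being an isomorphism can be checked locally on $S$. We may therefore assume $\mathcal{E}=\mathcal{O}_S^r$ is trivial. Then $\operatorname{Th}(\mathcal{E})\simeq \operatorname{Th}(\mathcal{O}_S)^{\otimes r}$, that is, $\one_S(r)[2r]$, and by~\eqref{gen_bott_class}
\begin{align*}
\nu(\mathcal{O}^r)=\sum_i(-1)^i\binom{r}{i}x^{r-i}=(x-1)^r .
\end{align*}

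The next step is multiplicativity. We check that under $\operatorname{Th}(\mathcal{E}\oplus\mathcal{F})\simeq\operatorname{Th}(\mathcal{E})\otimes\operatorname{Th}(\mathcal{F})$ the class $\nu(\mathcal{E}\oplus\mathcal{F})$ corresponds to the external product $\nu(\mathcal{E})\boxtimes\nu(\mathcal{F})$. For trivial bundles it is cleaner to argue directly: the map for $\mathcal{O}^r$ is the $r$-fold tensor power of the map for $\mathcal{O}$. This reduces everything to $r=1$. In that case the map $\operatorname{Th}(\mathcal{O}_S)=\mathbb{P}^1_S/\infty\to\one_S$ is given by the reduced class $[\mathcal{O}(1)]-1$ in $K(\mathbb{P}^1_S)$, up to sign the Bott element $\beta$. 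The statement for $r=1$ is then exactly Bott periodicity of $\KGL$, namely $\KGL\simeq\Omega_{\mathbb{P}^1}\KGL$ via $\beta$. That result is built into the construction of $\KGL$ as the $\mathbb{P}^1$-spectrum $(K,K,\dots)$ with bonding maps given by $\beta$ (\cite{Rio10}, \cite{Cis13}).

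The remaining point is to confirm that, as a map of $\KGL_S$-modules, $\one_S(1)[2]\to\one_S$ is an equivalence. It suffices to test it on the compact generators $Y_+(n)[m]$ for $Y$ smooth over $S$. Via~\eqref{eq:DKones}, the induced map on mapping spectra is
\begin{align*}
KH(Y)\to \operatorname{fib}\bigl(KH(\mathbb{P}^1_Y)\to KH(Y)\bigr),\qquad a\mapsto a\cdot(x-1).
\end{align*}
This is an isomorphism by the projective bundle formula $KH(\mathbb{P}^1_Y)=KH(Y)\oplus KH(Y)\cdot(x-1)$.

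I expect the main obstacle to be the multiplicativity step, which is needed to pass from the trivial rank-$r$ case to rank one. Two points need care there. First, one has to identify $\nu(\mathcal{O}^r)=(x-1)^r$ on $\mathbb{P}^r$ with the external product of the rank-one classes, compatibly with the identification of Thom spaces; this can be done by restricting to $\mathbb{A}^r$ modulo its complement of the origin, where both become the Koszul class. Second, one has to track that the null-homotopy on $\mathbb{P}(\mathcal{E})$ is the canonical one. An alternative that avoids this bookkeeping is to apply the projective bundle formula directly to $\mathbb{P}(\mathcal{E}\oplus\mathcal{O})$ relative to $\mathbb{P}(\mathcal{E})$. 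The cofiber of $KH(\mathbb{P}(\mathcal{E}\oplus\mathcal{O}_Y))\to KH(\mathbb{P}(\mathcal{E}_Y))$ is then free of rank one over $KH(Y)$, generated by the element whose top coefficient is $x^r$, and $\nu(\mathcal{E})$ is exactly that generator since $\nu(\mathcal{E})$ vanishes on $\mathbb{P}(\mathcal{E})$. This works without first trivializing $\mathcal{E}$.
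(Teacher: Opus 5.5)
Your proposal is correct, and the ``alternative'' in your last paragraph is essentially the paper's proof. For $X$ smooth over $S$, the paper identifies the map induced by~\eqref{eq:perK} on mapping spectra into $\one_S$ with
$KH(X)\to\operatorname{fib}\bigl(KH(\mathbb{P}(\mathcal{E}\oplus\mathcal{O}_S)\times_SX)\to KH(\mathbb{P}(\mathcal{E})\times_SX)\bigr)$, $a\mapsto\nu(\mathcal{E})\cdot p^*a$. It concludes directly from the projective bundle formula for $KH$, without trivializing $\mathcal{E}$.

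Your main route takes a different path: Zariski-local reduction to $\mathcal{O}^r$, then multiplicativity of the Thom classes, then the rank-one case. It ends with the same projective-bundle computation, only specialized to $\mathbb{P}^1$. The reductions therefore mainly buy a conceptual picture: the rank-one map is the built-in Bott map of $\KGL$, and the trivial rank-$r$ map is its $r$-th power. They cost you the external-product bookkeeping you yourself flag. The paper's direct route is shorter and avoids that bookkeeping entirely.

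Three small points:
\begin{enumerate}
\item The worry about choosing the null-homotopy on $\mathbb{P}(\mathcal{E})$ is moot. The restriction $KH(\mathbb{P}(\mathcal{E}\oplus\mathcal{O}_S))\to KH(\mathbb{P}(\mathcal{E}))$ is split surjective on homotopy groups. Hence $\pi_0$ of its fiber injects into $KH_0(\mathbb{P}(\mathcal{E}\oplus\mathcal{O}_S))$, and the homotopy class of~\eqref{eq:perK} is determined by $\nu(\mathcal{E})$ alone.
\item In the alternative you mean the fiber (the kernel) of the restriction, not its cofiber.
\item Your displayed computation is precomposition into the fixed target $\one_S$, not $\operatorname{Map}(G,-)$ applied to generators $G$. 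It shows that $\underline{\operatorname{Hom}}(\one_S,\one_S)\to\underline{\operatorname{Hom}}(\operatorname{Th}(\mathcal{E}),\one_S)$ is an equivalence. This uses that the $Y_+[m]$ generate $\DK(S)$, which holds thanks to the periodicity built into $\KGL$. One then concludes because $\operatorname{Th}(\mathcal{E})$ is invertible. The paper leaves this step implicit as well.
\end{enumerate}
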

\proof
For any $X\in Sm/S$, the map~\eqref{eq:perK} induces a map
\begin{align}
\begin{split}
\label{omega_spec_map}
KH(X)
&\overset{\eqref{eq:DKones}}{\simeq}
\operatorname{Map}_{\DK(S)}(X_+,\one_S)
\to
\operatorname{Map}_{\DK(S)}(\operatorname{Th}(\mathcal{E})\wedge X_+,\one_S)\\
&\overset{\eqref{Thom_proj_bundle}}{\simeq}
\operatorname{fib}\left(
KH(\mathbb{P}(\mathcal{E}\oplus\mathcal{O}_S)\times_SX)
\to 
KH(\mathbb{P}(\mathcal{E})\times_SX)\right).
\end{split}
\end{align}
The induced map $KH(X)\to KH(\mathbb{P}(\mathcal{E}\oplus\mathcal{O}_S)\times_SX)$ is given by $a\mapsto \nu(\mathcal{E})\cdot p^*a$. Therefore the map~\eqref{omega_spec_map} is an isomorphism by the projective bundle formula for $KH$-theory (\cite[Th. 2.2.1.13]{Deg11}), and the result follows.
\endproof

\begin{corollary}[See \textrm{\cite[4.3.1]{DJK}}]
If $f:X\to S$ is an lci morphism, then we have a natural tranformation in $\DK(X)$
\begin{align}
\label{eq:relpurK}
\mathfrak{p}_f:
f^*(-)
\overset{\eqref{eq:perK}}{\simeq}
f^*(-)\otimes\operatorname{Th}(T_f)
\to
f^!(-)
\end{align}
which is an isomorphism if $f$ is smooth. If $g:Y\to X$ is another lci morphism such that $fg$ is lci, then there is a canonical homotopy between the map $\mathfrak{p}_{fg}$ and the composition
\begin{align}
g^*f^*
\xrightarrow{\mathfrak{p}_g}
g^!f^*
\xrightarrow{\mathfrak{p}_f}
g^!f^!
\simeq
(fg)^!.
\end{align}
\end{corollary}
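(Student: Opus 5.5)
This corollary is the $\DK$-instance of the fundamental class and purity transformation constructed in \cite[4.3.1]{DJK}; I will build it from Lemma~\ref{omega}.

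\emph{Construction.} Factor $f=p\circ i$ with $i:X\to P$ a regular closed immersion and $p:P\to S$ smooth. By Lemma~\ref{omega}, for the virtual tangent bundle $T_f=[i^*T_p]-[N_XP]$ the Thom object $\operatorname{Th}(T_f)$ is canonically isomorphic to $\one_X$. This gives the first isomorphism $f^*\simeq f^*\otimes\operatorname{Th}(T_f)$ in~\eqref{eq:relpurK}. The second arrow is multiplication by the fundamental class $\eta_f\in\operatorname{Map}(\operatorname{Th}(T_f),f^!\one_S)$ of \cite[4.3]{DJK}, which is defined for any lci $f$ in any motivic ring spectrum. I will use two ingredients:
\begin{itemize}
\item for smooth $p$, the class $\eta_p$ comes from the purity isomorphism $p^!\simeq p^*\otimes\operatorname{Th}(T_p)$ of the six functors formalism \cite{CD};
\item for the regular immersion $i$, the class $\eta_i$ comes from deformation to the normal cone, which is the motivic analogue of the specialization map of Definition~\ref{def:spcone}.
\end{itemize}
The transformation is then $f^*M\otimes\operatorname{Th}(T_f)\to f^*M\otimes f^!\one_S\to f^!M$, where the last map is the canonical exchange map for $f^!$ and $\otimes$.

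\emph{Smooth case.} When $f$ is smooth, $\eta_f$ is by construction the inverse of the purity isomorphism. The exchange map $f^*M\otimes f^!\one_S\to f^!M$ is an isomorphism for smooth $f$ \cite{CD}. Hence $\mathfrak{p}_f$ is an isomorphism.

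\emph{Composition.} The homotopy between $\mathfrak{p}_{fg}$ and $\mathfrak{p}_f\circ\mathfrak{p}_g$ is the associativity of fundamental classes, $\eta_{fg}\simeq\eta_g\cdot g^*\eta_f$ \cite[Thm.~3.3.2]{DJK}. Two further compatibilities are needed:
\begin{itemize}
\item Under the isomorphism $\operatorname{Th}(T_{fg})\simeq\operatorname{Th}(T_g)\otimes g^*\operatorname{Th}(T_f)$, coming from additivity of Thom objects along $T_{fg}=T_g+g^*T_f$, the Bott classes~\eqref{gen_bott_class} are multiplicative. This follows from the Whitney formula for $\lambda$-operations, which gives $\nu(\mathcal{E}\oplus\mathcal{F})=\nu(\mathcal{E})\nu(\mathcal{F})$ after pullback.
\item The exchange maps satisfy the evident coherence.
\end{itemize}

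\emph{Main obstacle.} The delicate point is independence of the chosen factorization, together with the fact that the periodicity isomorphism of Lemma~\ref{omega} extends to virtual bundles functorially in short exact sequences. For both I rely on \cite{DJK}, applied with $\KGL$ as an orientable, indeed $\mathbb{G}_m$-periodic, ring spectrum.
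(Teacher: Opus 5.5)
Your proposal is correct and follows the same route as the paper. The paper gives no separate proof: it composes the periodicity isomorphism of Lemma~\ref{omega}, which trivializes $\operatorname{Th}(T_f)$, with the fundamental-class purity transformation of \cite[4.3.1]{DJK}, and takes the smooth case and the composition rule from there. Your remark that the trivializations must be multiplicative along $T_{fg}=T_g+g^*T_f$ makes explicit a compatibility the paper leaves implicit.
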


\begin{definition}
For a morphism $f\colon X\to S$, define the \textbf{Borel--Moore $K$-motive} of $X$ over $S$ as
\begin{align}
\KBM(X/S):=f_!\one_X\in\DK(S).
\end{align}
%Unless stated otherwise, morphisms discussed below are separated and of finite type.
\end{definition}

%\begin{lemma}\label{lem:BM-functoriality}

\subsection{}
We recall the following functorialities of the Borel--Moore $K$-motive. These properties are consequences of the abstract six-functors formalism, which hold in $\DK$ as well as in $\mathbf{DM}_{cdh}$, see \cite[Cor. 13.3.5]{CD}. Details can be found in \cite[Lemma 2.4, Def. 2.21, Lemma 2.23]{Jin16}, \cite[Def. 3.2.7]{Deg11} and \cite[Thm. 4.2.1, 4.5.6]{DJK}. In what follows, assume that all schemes are separated of finite type over the base scheme $S$.
%Let $p\colon Y\to S$ be separated and of finite type.
\begin{enumerate}
\item(Functorialities) Let $X\xrightarrow{f}Y\xrightarrow{p}S$ be two composable morphisms.
\begin{enumerate}
\item
If $f$ is proper, there is a map
\begin{align}
\label{eq:BMprop}
\KBM(Y/S)
=
p_!\one_Y\to p_!f_*f^*\one_S
=
\KBM(X/S).
\end{align}
\item
If $f$ is lci, there is a map
\begin{align}
\label{eq:BMsm}
\begin{split}
\KBM(X/S)
=
p_!f_!f^*\one_S
\overset{\eqref{eq:relpurK}}{\simeq}
p_!f_!f^!\one_S
\to
p_!\one_S
=
\KBM(Y/S).
\end{split}
\end{align}
\end{enumerate}

\item(Localization) For a closed immersion $i\colon Z\hookrightarrow X$ with complementary open immersion $j\colon U\hookrightarrow X$, there is a canonical map 
\begin{align}
\partial_{X,Z}\colon\KBM(Z/S)\to\KBM(U/S)[1]
\end{align}
inducing a distinguished triangle
\begin{align}
\label{eq:KBMloc}
\KBM(U/S)\xrightarrow{j_*}\KBM(X/S)\xrightarrow{i^*}\KBM(Z/S)\xrightarrow{\partial_{X,Z}}\KBM(U/S)[1].
\end{align}
If $i$ is a regular closed immersion such that the normal bundle $N_i$ is endowed with a trivialization, there is map analogous to~\eqref{eq:Gcon}
\begin{align}
\label{eq:spKBM}
\KBM(Z/S)\to \KBM(U/S).
\end{align}

\item(Base change)
Given a Cartesian square
\begin{align}
\begin{split}
  \xymatrix@=10pt{
    X' \ar[r]^-{q} \ar[d]_-{g} & Y' \ar[d]^-{f} \\
    X \ar[r]^-{p} & Y
  }
\end{split}
\end{align}
there is an isomorphism
\begin{align}
\label{eq:BMBC}
\KBM(X'/Y')=q_!g^*\one_X
\simeq
f^*p_!\one_X=f^*\KBM(X/Y).
\end{align}
\item(K\"unneth formula)
For $S$-schemes $X,Y$, there is an isomorphism
\begin{align}
\label{eq:BMKun}
\KBM(X\times_SY/S)\simeq\KBM(X/S)\otimes\KBM(Y/S).
\end{align}
%is distinguished.
\end{enumerate}

\begin{definition}[Refined construction, see \textrm{\cite[Def. 2.31]{Jin16}}]\label{def:refined-Gysin-motive}
\label{def:refDK}
Consider a Cartesian square
\begin{align}
\begin{split}
  \xymatrix@=10pt{
    X' \ar[r]^-{} \ar[d]_-{} & Y' \ar[d]^-{f} \\
    X \ar[r]^-{g} & Y
  }
\end{split}
\end{align}
where $g$ is lci, and let $r\colon Y'\to S$ be a morphism. Define a map
\begin{align}
\label{eq:rGKBM}
R_f(g)\colon\KBM(X'/S)\longrightarrow\KBM(Y'/S)
\end{align}
by applying the functor $r_!$ to the composition
\begin{align}
\KBM(X'/Y')
\xrightarrow{\eqref{eq:BMBC}}
f^*\KBM(X/Y)
\xrightarrow{\eqref{eq:BMsm}}
f^*\one_Y=\one_{Y'}.
\end{align}

\end{definition}

\subsection{}
We now give a compatibility between refined Gysin morphisms and the tensor structure.
\begin{definition}\label{def:Ir}
Let $r\colon S\to S'$ be a smooth morphism and let $X$ be an $S$-scheme. Given a map $\alpha:\KBM(X/S)\to\one_S[n]$, we define a map
\begin{align}
\begin{split}
I_r(\alpha):\KBM(X/S')
=
r_!\KBM(X/S)
\xrightarrow{r_!\alpha}
r_!\one_S[n]
=
\KBM(S/S')[n]
\xrightarrow{\eqref{eq:BMsm}}
\one_{S'}[n].
\end{split}
\end{align}
The map $\alpha\mapsto I_r(\alpha)$ induces an equivalence
\begin{align}
\label{eq:DKSM}
\operatorname{Map}_{\DK(S)}(\KBM(X/S),\one_S[n])
\simeq
\operatorname{Map}_{\DK(S')}(\KBM(X/S'),\one_{S'}[n]).
\end{align}
\end{definition}

Let $X$ and $Y$ be two $S$-schemes. Given two maps $\alpha:\KBM(X/S)\to\one_S[n]$ and $\beta:\KBM(Y/S)\to\one_S[m]$, denote
\begin{align}
\alpha\otimes_S\beta:
\KBM(X\times_SY/S)
\overset{\eqref{eq:BMKun}}{\simeq}
\KBM(X/S)\otimes\KBM(Y/S)
\xrightarrow{\alpha\otimes\beta}
\one_S[n+m].
\end{align}
On the other hand, since $r$ is smooth of relative dimension $d$, the diagonal morphism $\delta:S\to S\times_{S'}S$ is a regular immersion of codimension $d$. By Definition~\ref{def:refined-Gysin-motive}, we associate to the Cartesian square
\begin{align}
\begin{split}
  \xymatrix@=10pt{
    X\times_SY \ar[r]^-{} \ar[d]_-{} & X\times_{S'}Y \ar[d]^-{\pi} \\
    S \ar[r]^-{\delta} & S\times_{S'}S
  }
\end{split}
\end{align}
a map 
\begin{align}
\label{eq:rGdia}
R_{\pi}(\delta)\colon\KBM(X\times_SY/S')
\xrightarrow{\eqref{eq:rGKBM}}
\KBM(X\times_{S'}Y/S').
\end{align}

\begin{lemma}\label{lem:tensor-refined-gysin}
The map $I_r(\alpha\otimes_S\beta):\KBM(X\times_SY/S')\to \one_{S'}[n+m]$ is homotopic to the composition
\begin{align}
\begin{split}
\KBM(X\times_SY/S')
\xrightarrow{\eqref{eq:rGdia}}
\KBM(X\times_{S'}Y/S')
\xrightarrow{I_r(\alpha)\otimes_{S'}I_r(\beta)}
\one_{S'}[n+m].
\end{split}
\end{align}
% and $()\circ R_{\pi}(\delta)$ are homotopic as maps $$.
\end{lemma}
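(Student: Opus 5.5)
The plan is to reduce the statement to a formal compatibility in the six functors formalism for $\DK$, by unwinding both sides as maps out of $r_!$ of objects over $S\times_{S'}S$. Write $p:X\to S$, $q:Y\to S$, and let $h=p\times_{S'}q:X\times_{S'}Y\to S\times_{S'}S$, which is the morphism $\pi$ of the Cartesian square. Let $\rho:S\times_{S'}S\to S'$ be the structure map, so that $\rho\circ\delta=r$. The K\"unneth isomorphism~\eqref{eq:BMKun} over $S'$ is the composition of the exterior product and proper base change:
\[
r_!p_!\one_X\otimes r_!q_!\one_Y\simeq\rho_!(\operatorname{pr}_1^*p_!\one_X\otimes\operatorname{pr}_2^*q_!\one_Y)\simeq\rho_!h_!\one_{X\times_{S'}Y}.
\]
Using this, I will rewrite the right-hand composition as $\rho_!$ applied to a map in $\DK(S\times_{S'}S)$ built from $\operatorname{pr}_1^*\alpha\boxtimes\operatorname{pr}_2^*\beta$, followed by $\rho_!$ of the map $\one_{S\times_{S'}S}\otimes\operatorname{pr}_1^*\operatorname{Th}(T_r)^{-1}\otimes\operatorname{pr}_2^*\operatorname{Th}(T_r)^{-1}\simeq\operatorname{pr}_1^!\operatorname{pr}_2^!$-type trace. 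Concretely, $I_r(\alpha)\otimes I_r(\beta)$ becomes $\rho_!(\operatorname{pr}_1^*\alpha\otimes\operatorname{pr}_2^*\beta)$ composed with the trace of the smooth morphism $\rho$ given by~\eqref{eq:BMsm}, via the purity isomorphism~\eqref{eq:relpurK}.

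Next I will identify the refined Gysin map $R_\pi(\delta)$, using the base change~\eqref{eq:BMBC}, with $\rho_!$ of the map
\[
\delta_!\delta^*h_!\one\simeq\delta_!\delta^*\mathcal{M}\to\mathcal{M},\qquad \mathcal{M}:=\operatorname{pr}_1^*p_!\one_X\otimes\operatorname{pr}_2^*q_!\one_Y,
\]
where the arrow is the Gysin map $\delta_!\delta^*\simeq\delta_!\delta^!\otimes\operatorname{Th}(N_\delta)\to\operatorname{id}$ coming from~\eqref{eq:relpurK} applied to the lci map $\delta$. Since $\delta^*\mathcal{M}\simeq p_!\one_X\otimes q_!\one_Y$, naturality of this Gysin transformation with respect to the map $\operatorname{pr}_1^*\alpha\otimes\operatorname{pr}_2^*\beta:\mathcal{M}\to\one[n+m]$ lets me move $\alpha\otimes\beta$ past $R_\pi(\delta)$. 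The composite then becomes
\[
r_!(\alpha\otimes\beta)
\]
followed by $\rho_!$ of the Gysin map $\delta_!\one_S\to\one_{S\times_{S'}S}$, and then followed by the trace of $\rho$.

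The last step is to see that this composite equals the trace of $r$, which is exactly $I_r(\alpha\otimes_S\beta)$. This follows from the composition compatibility of the purity transformations stated in the corollary after Lemma~\ref{omega}, applied to $\rho\circ\delta=r$. Here $\delta$ is lci and $\rho$ is smooth, so the trace maps compose, and the Thom twists match through the canonical isomorphism $T_r\simeq N_\delta$.

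The main obstacle is the bookkeeping of coherence: checking that the K\"unneth isomorphism, the base change, and the exchange between the $\otimes$ and $!$ projection formulas are compatible with the purity isomorphisms. I would handle this exactly as in \cite[Lemma 2.23]{Jin16} and \cite[Thm. 4.2.1]{DJK}, where the analogous statement is proved formally in any motivic six functors formalism with orientation data. Everything used there transfers to $\DK$, with Lemma~\ref{omega} supplying the required Thom isomorphisms.
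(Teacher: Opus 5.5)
Your proposal is correct and follows essentially the same route as the paper. Both write $R_\pi(\delta)$ as $\mu_!$ (your $\rho_!$) of the Gysin map $\delta_!\delta^*E\to E$ for $E=\operatorname{pr}_1^*\KBM(X/S)\otimes\operatorname{pr}_2^*\KBM(Y/S)$, and both conclude by naturality of the purity transformation $\delta^*\to\delta^!$ with respect to $\operatorname{pr}_1^*\alpha\otimes\operatorname{pr}_2^*\beta$. The paper phrases this through the adjunctions $(r_!,r^!)$ and $(\mu_!,\mu^!)$; you phrase it with trace maps and make explicit the compatibility $\mathfrak{p}_r=\mathfrak{p}_\delta\circ\mathfrak{p}_\rho$, which the paper uses tacitly when it identifies $\delta^!\one_T\simeq r^!\one_{S'}$.

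One small correction: on objects like your $\mathcal{M}$, the map $\delta^*\to\delta^!$ (up to Thom twist) is only a natural transformation, not an isomorphism as your "$\simeq$" suggests. Your argument uses only its naturality, so nothing breaks.
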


\proof
%Let $f\colon X\to S$ and $g\colon Y\to S$ be structural, and put
%\[
%A:==f_!\one_X,\qquad B:==g_!\one_Y.
%\]
Let $T:=S\times_{S'}S$, with projections $p_1,p_2\colon T\to S$ and structural morphism $\mu\colon T\to S'$. %Thus $r=\mu\circ\delta$, $p_1\circ\delta=p_2\circ\delta=\id_S$. 
By the base change formula~\eqref{eq:BMBC} we have
\begin{align}
\KBM(X\times_{S'}S/T)\simeq p_1^*A,
\qquad
\KBM(S\times_{S'}Y/T)\simeq p_2^*B.
\end{align}
Let $E:=p_1^*\KBM(X/S)\otimes p_2^*\KBM(Y/S)\in\DK(T)$. Then $\delta^*E\simeq \KBM(X/S)\otimes \KBM(Y/S)$, and by the K\"unneth formula~\eqref{eq:BMKun} we have
\begin{align}
\KBM(X\times_SY/S')\simeq\mu_!\delta_!\delta^*E,
\qquad
\KBM(X\times_{S'}Y/S')\simeq\mu_!E.
\end{align}
The refined Gysin morphism~\eqref{eq:rGKBM} is $\mu_!(\gamma_E)$, where
\begin{align}
\gamma_E\colon
\delta_!\delta^*E
\simeq
\delta_!(\delta^!\one_T\otimes\delta^*E)
\simeq
\delta_!\delta^!\one_T\otimes E
\xrightarrow{}E.
\end{align}
%The first isomorphism is absolute purity and the second is the projection formula.
Via the adjoint pair $(r_!,r^!)$, the map $I_r(\alpha\otimes\beta)$ corresponds to
\begin{equation}\label{eq:Ir-first}
\delta^*E[-n-m]
\xrightarrow{\delta^*(p_1^*\alpha\otimes p_2^*\beta)}
\delta^*\one_T
\overset{\eqref{eq:relpurK}}{\simeq}
\delta^!\one_T
\simeq 
r^!\one_{S'}.
\end{equation}
Via the adjoint pair $(\mu_!,\mu^!)$, the map $I_r(\alpha)\otimes I_r(\beta)$ corresponds to
\begin{align}
E[-n-m]%=p_1^*A\otimes p_2^*B
\xrightarrow{p_1^*\alpha\otimes p_2^*\beta}\one_T
\overset{\eqref{eq:relpurK}}{\simeq}
\mu^!\one_{S'}.
\end{align}
Consequently, the map $(I_r(\alpha)\otimes I_r(\beta))\circ R_{\pi}(\delta)$ corresponds to
\begin{align}
\begin{split}
\label{eq:Ir-second}
\delta^*E
\xrightarrow{\eqref{eq:relpurK}}
\delta^!E
\xrightarrow{\delta^!(p_1^*\alpha\otimes p_2^*\beta)}
\delta^!\one_T[n+m]
\simeq 
r^!\one_{S'}[n+m].
\end{split}
\end{align}
The maps \eqref{eq:Ir-first} and \eqref{eq:Ir-second} are homotopic by naturality of the transformation $\delta^*\xrightarrow{\eqref{eq:relpurK}}\delta^!$ (see \cite[Example 4.3.5]{DJK}), as expressed by the commutative diagram
\begin{align}
\begin{split}
  \xymatrix@R=10pt{
    \delta^*E \ar[r]^-{\eqref{eq:relpurK}} \ar[d]_-{\delta^*(p_1^*\alpha\otimes p_2^*\beta)} & \delta^!E \ar[d]^-{\delta^!(p_1^*\alpha\otimes p_2^*\beta)} \\
    \delta^*\one_T \ar[r]^-{\sim}_-{\eqref{eq:relpurK}} & \delta^!\one_T
  }
\end{split}
\end{align}
which finishes the proof.
\endproof

\section{Borel-Moore homology in $\KGL$-modules}
\label{sec:DKBM}
\begin{definition}
For any separate morphism of finite type $f:X\to S$, we denote by 
\begin{align}
\begin{split}
\DK^{BM}(X/S)
&=
\operatorname{Map}_{\DK(X)}(\one_X,f^!\one_S)\\
&=
\operatorname{Map}_{\DK(X)}(\KBM(X/S),\one_S)
\end{split}
\end{align}
the Borel-Moore mapping spectrum in $\DK$ (see \cite[Def. 2.2.1]{DJK}). We also denote
\begin{align}
\DK^{BM}_m(X/S)
=
\pi_m\DK^{BM}(X/S).
\end{align}
\end{definition}

The functoriality of this object is studied in details in \cite{DJK}. In particular, it has proper push-forwards, refined Gysin morphisms and products.
\begin{enumerate}
\item
If $f:X\to Y$ is a proper morphism, the map~\eqref{eq:BMprop} induces a push-forward map
\begin{align}
\label{eq:DKpf}
\DK^{BM}(X/S)
\to
\DK^{BM}(Y/S).
\end{align}
\item
Consider a Cartesian square
\begin{align}
\begin{split}
  \xymatrix@=10pt{
    X' \ar[r]^-{} \ar[d]_-{} & Y' \ar[d]^-{f} \\
    X \ar[r]^-{g} & Y
  }
\end{split}
\end{align}
where $g$ is lci. The map~ \eqref{eq:rGKBM} induces a refined Gysin map
\begin{align}
\label{eq:DKrG}
R_f(g):
\DK^{BM}(Y'/S)
\to
\DK^{BM}(X/S).
\end{align}

\item
If $X$ and $Y$ are two $S$-schemes, there is a map
\begin{align}
\begin{split}
\label{eq:DKprod}
&\DK^{BM}(X/S)
\wedge
\DK^{BM}(Y/S)\\
\xrightarrow{\eqref{eq:BMBC}}
&\DK^{BM}(X\times_SY/Y)
\wedge
\DK^{BM}(Y/S)
\to
\DK^{BM}(X\times_SY/S)
\end{split}
\end{align}
where the last map is defined in \cite[2.2.7 (4)]{DJK}.
\end{enumerate}

\begin{theorem}
\label{thm:Grep}
Let $S$ be a regular scheme. Then for any quasi-projective $S$-scheme $X$, there is an equivalence as $K(S)$-modules
\begin{align}
\label{eq:GBM}
\DK^{BM}(X/S)\simeq G(X)
\end{align}
which is compatible with projective push-forwards, refined Gysin morphisms and products on $G_0$. % and products.
\end{theorem}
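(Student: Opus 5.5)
We will construct the equivalence~\eqref{eq:GBM} by reducing to the smooth case through a closed embedding, and then check the three compatibilities one at a time.

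\textbf{Step 1: the equivalence.} Since $X$ is quasi-projective over $S$, choose a factorization $X\xrightarrow{i}P\xrightarrow{p}S$, where $p$ is smooth (an open subscheme of some $\mathbb{P}^n_S$) and $i$ is a closed immersion. Then $P$ is regular. By purity~\eqref{eq:relpurK} and the periodicity of Lemma~\ref{omega}, $p^!\one_S\simeq p^*\one_S=\one_P$, so
\begin{align*}
\DK^{BM}(X/S)\simeq\operatorname{Map}_{\DK(P)}(i_*\one_X,\one_P)\simeq\operatorname{fib}\bigl(\operatorname{Map}(\one_P,\one_P)\to\operatorname{Map}(j_\sharp\one_U,\one_P)\bigr),
\end{align*}
where $U=P-X$. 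By~\eqref{eq:DKones} this is $\operatorname{fib}(KH(P)\to KH(U))$. As $P$ and $U$ are regular, $KH=K$, so this fiber is $K(P\textrm{ on }X)\simeq G(X)$ by~\eqref{eq:KsuppG}. All identifications are $K(S)$-linear, since they come from the $\KGL_S$-module structure.

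We must show the result does not depend on the chosen embedding. Given two embeddings, embed $X$ diagonally into $P\times_SP'$ and compare each to the diagonal one via the smooth projection $P\times_SP'\to P$. The compatibility of purity with composition (the corollary after Lemma~\ref{omega}) identifies the $\DK$ side, and on the $K$-theory side we use that smooth pullback $K(P\textrm{ on }X)\to K(P\times_S P'\textrm{ on }X\times_S P')$ followed by restriction to the section corresponds to the identity on $G(X)$.

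\textbf{Step 2: push-forwards and refined Gysin maps.} For a projective $f:X\to Y$, choose compatible embeddings $X\hookrightarrow\mathbb{P}^n_P$ and $Y\hookrightarrow P$, so that we reduce to two cases.
\begin{itemize}
\item For a closed immersion, the map~\eqref{eq:DKpf} is extension of supports, which matches $i_*$ on $G$.
\item For the projection $\mathbb{P}^n_P\to P$, the $\DK$ push-forward is built from the trace via purity and~\eqref{eq:perK}. Via the projective bundle formula this agrees with $R\pi_*$ on $K$; this is the usual Riemann--Roch-free compatibility of Bott periodicity, as in \cite[Lemma 6.1.3.3]{Rio10}.
\end{itemize}
For refined Gysin maps, both sides are defined by deformation to the normal cone: Definition~\ref{def:refGys} on the $G$ side, and \cite[4.5.6]{DJK} on the $\DK$ side. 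It therefore suffices to check three things:
\begin{itemize}
\item compatibility with localization sequences, namely the boundary maps of~\eqref{eq:KBMloc} and~\eqref{eq:fibG};
\item compatibility with smooth pullback;
\item compatibility with multiplication by $t$, i.e.\ Lemma~\ref{lm:spano}.
\end{itemize}
The first two follow from the construction in Step 1. The third follows because $t\in\Omega K(\mathbb{Z}[t,t^{-1}])$ is the same class on both sides under~\eqref{eq:DKones}. Corollary~\ref{cor:spnew} then identifies the two refined Gysin maps.

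\textbf{Step 3: products on $G_0$.} The product~\eqref{eq:DKprod} is an exterior product followed by a refined pullback along the diagonal of $S$. Concretely, embed $X\subset P$ and $Y\subset Q$ with $P,Q$ smooth over $S$. Then the exterior product $G(X)\wedge G(Y)\to G(X\times_S Y)$ on the $\DK$ side corresponds to the $K$-theory cup product
\[
K(P\textrm{ on }X)\wedge K(Q\textrm{ on }Y)\to K(P\times_SQ\textrm{ on }X\times_SY)
\]
of the pulled-back perfect complexes. On $G_0$ this sends $[\mathcal F]$ and $[\mathcal G]$, represented by finite locally free resolutions on $P$ and $Q$, to their derived tensor product over $\mathcal{O}_S$. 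Its class is $\sum_i(-1)^i[\Tor_i^{\mathcal O_S}(\mathcal F,\mathcal G)]$, which is Definition~\ref{def:G0prod}. This is a local computation using Lemma~\ref{lm:Tortens}.

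\textbf{Main obstacle.} I expect the main difficulty to be matching the $\DK$-product of \cite[2.2.7]{DJK} with the naive $K$-theory cup product. The DJK product uses the Künneth isomorphism~\eqref{eq:BMKun} and purity, and one has to confirm that the periodicity isomorphisms~\eqref{eq:perK} chosen for $P$, $Q$ and $P\times_SQ$ are multiplicative: $\nu(\mathcal E\oplus\mathcal E')=\nu(\mathcal E)\nu(\mathcal E')$. I would first prove this multiplicativity from formula~\eqref{gen_bott_class}, and then use Lemma~\ref{lem:tensor-refined-gysin} to reduce the comparison to exterior products over a smooth base.
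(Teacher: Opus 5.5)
Your proposal is correct and follows essentially the same route as the paper. The shared steps are:
\begin{enumerate}
\item embedding $X$ into a smooth $S$-scheme and identifying with $K$-theory with supports;
\item independence of the embedding via the diagonal embedding into $P\times_SP'$;
\item reducing push-forwards to closed immersions and $\mathbb{P}^n$-projections via the projective bundle formula;
\item handling refined Gysin maps via deformation to the normal cone, using compatibility with localization and smooth pullback;
\item computing the $G_0$-product via embeddings $X\subset P$, $Y\subset Q$ and the supported cup product, which computes $\mathcal{F}\boxtimes^{\mathbf L}_S\mathcal{G}$. This is exactly Lemma~\ref{lm:DKG0prodd}. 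As in the paper, the comparison of the product of \cite[2.2.7]{DJK} with the supported cup product is left to a version of \cite[Example 2.4.1]{Rio10} with supports.
\end{enumerate}
One small caveat: describing the product through a refined pullback along the diagonal of $S$ and Lemma~\ref{lem:tensor-refined-gysin} only makes sense when $S$ is smooth over some base, as in Lemma~\ref{lm:DKG0prod}. For a general regular $S$ you should rely only on your embedding argument, which is what the paper does.
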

%Note that Theorem~\ref{thm:Grep} is proved in \cite{Jin18} in the category $\SH$. Here we work in the category $\KGL$-modules, meaning that we need to work $\KGL$-linearly; we restrict to the quasi-projective case, using an argument simpler than that of \cite{Jin18}.

\subsection{}
If $S$ is a regular scheme, a particular case of the equivalence~\eqref{eq:DKones} becomes
\begin{align}
\label{eq:DKone}
\operatorname{Map}_{\DK(S)}(\one_S,\one_S)
\simeq
G(S).
\end{align}
By \cite[\S13.1]{CD}, the equivalence~\eqref{eq:DKone} is compatible with smooth pullbacks: if $f:T\to S$ is a smooth morphism, there is a commutative diagram
\begin{align}
\label{eq:DKpb}
\begin{split}
  \xymatrix@=10pt{
    \operatorname{Map}_{\DK(S)}(\one_S,\one_S) \ar[r]^-{\eqref{eq:DKone}}_-{\sim} \ar[d]_-{} & G(S) \ar[d]^-{\eqref{eq:Gpb}} \\
    \operatorname{Map}_{\DK(T)}(\one_{T},\one_T) \ar[r]^-{\eqref{eq:DKone}}_-{\sim} & G(T).
  }
\end{split}
\end{align}

\subsection{}
\label{num:BMGcl}
We have the following generalization of~\eqref{eq:DKone}: by \cite[(13.4.1.3)]{CD}, if $S$ is a regular scheme and $i:Z\to S$ is a closed immersion, then there is an equivalence
\begin{align}
\label{eq:CD13413}
\DK^{BM}(Z/S)
\simeq
\operatorname{Map}_{\DK(Z)}(\one_Z,i^!\one_S)
\simeq
K(S\textrm{\ on\ }Z)
\overset{\eqref{eq:KsuppG}}{\simeq}
G(Z).
\end{align}
Indeed, if $Z=S$, this follows from the construction of $\KGL_S$; in the general case, the result follows by identifying both sides as homotopy fibers
\begin{align}
\operatorname{fib}\left(\operatorname{Map}_{\DK(S)}(\one_S,\one_S)\to\operatorname{Map}_{\DK(S-Z)}(\one_{S-Z},\one_{S-Z})\right)
\simeq
\operatorname{fib}\left(K(S)\to K(S-Z)\right).
\end{align}
\begin{lemma}
\label{lm:BMsmr}
Assume that the closed immersion $i:Z\to S$ factors as $Z\xrightarrow{k}M\xrightarrow{p}S$ with $k$ a closed immersion and $p$ smooth. Then the following diagram is commutative:
\begin{align}
\begin{split}
  \xymatrix@=10pt{
    \DK^{BM}(Z/S) \ar[r]^-{\eqref{eq:CD13413}}_-{\sim} \ar[rd]_-{\eqref{eq:DKSM}}^-{\sim} & G(Z)  \\
    & \DK^{BM}(Z/M). \ar[u]_-{\eqref{eq:CD13413}}^-{\wr}
  }
\end{split}
\end{align}

\end{lemma}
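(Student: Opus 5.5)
Both equivalences in the diagram come from the construction in~\ref{num:BMGcl}, which identifies a Borel--Moore spectrum with a homotopy fiber of unit mapping spectra over a regular base. The map \eqref{eq:DKSM} is built from the purity isomorphism \eqref{eq:relpurK} for the smooth morphism $p$, which fits with smooth pullback by \eqref{eq:DKpb}. The plan is to show that, after passing to these fiber descriptions, all three maps are induced by one square of fiber sequences, and then to check commutativity on the ambient (unsupported) terms.

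First I note that $M$ is regular, being smooth over the regular scheme $S$, so \eqref{eq:CD13413} applies to $k:Z\to M$. Write $U=S-Z$ and $V=M-Z$, so that $V=p^{-1}(U)\cap M$ is open in $M$. Both vertical identifications then come from the fiber sequences
\begin{align}
\DK^{BM}(Z/S)\to \operatorname{Map}_{\DK(S)}(\one_S,\one_S)\to \operatorname{Map}_{\DK(U)}(\one_U,\one_U),
\end{align}
\begin{align}
\DK^{BM}(Z/M)\to \operatorname{Map}_{\DK(M)}(\one_M,\one_M)\to \operatorname{Map}_{\DK(V)}(\one_V,\one_V).
\end{align}
These map compatibly to $K(S\textrm{ on }Z)\to K(S)\to K(U)$ and $K(M\textrm{ on }Z)\to K(M)\to K(V)$. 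Both supported $K$-theories are identified with $G(Z)$ by \eqref{eq:KsuppG}, and the resulting equivalence $K(S\textrm{ on }Z)\simeq K(M\textrm{ on }Z)$ is given by pulling back along $p$, since for Tor-finite $p$ the pullback of a resolution of $\mathcal O_Z$-modules restricts correctly. This is an instance of the compatibility of \eqref{eq:KsuppG} with flat pullback.

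Next I would realize the equivalence \eqref{eq:DKSM} for $X=Z$ on these fibers. The target object is $k^!\one_M$, and under the smooth purity isomorphism $p^*\simeq p^!\otimes\operatorname{Th}(-T_p)$ one has $i^!\one_S=k^!p^!\one_S\simeq k^!(p^*\one_S\otimes \operatorname{Th}(T_p))\simeq k^!\one_M\otimes k^*\operatorname{Th}(T_p)$. Lemma~\ref{omega} then trivializes $\operatorname{Th}(T_p)$ via the Bott class $\nu(T_p)$. Unwinding Definition~\ref{def:Ir}, the map $\DK^{BM}(Z/S)\to\DK^{BM}(Z/M)$ is therefore the restriction of the smooth pullback
\begin{align}
p^*:\operatorname{Map}_{\DK(S)}(\one_S,\one_S)\to\operatorname{Map}_{\DK(M)}(\one_M,\one_M)
\end{align}
to supports on $Z$, composed with the periodicity twist. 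This map is compatible with the localization triangles, by naturality of the exchange maps, applied to the restricted cartesian square over $U$.

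It then suffices to see that the square on the middle terms commutes, which is \eqref{eq:DKpb}, provided the Bott-periodicity twist acts trivially. I expect this to be the main obstacle: showing that the trivialization of $\operatorname{Th}(T_p)$ in \eqref{eq:relpurK} matches, under \eqref{eq:DKones}, multiplication by the class that the $K$-theoretic identification implicitly uses. Concretely, I would reduce to the zero section of the vector bundle $T_p$ by deformation to the normal cone, i.e.\ the same double-deformation argument as in Lemma~\ref{lm:spano}. There the composite becomes the Thom class $\nu(\mathcal E)$ of \eqref{gen_bott_class}, whose restriction to the zero section is the Koszul class of $\mathcal O$ on the zero section. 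That is exactly the class $[\mathcal O_Z]$ used in the identification $K(E\textrm{ on }Z)\simeq G(Z)$, so the triangle commutes.
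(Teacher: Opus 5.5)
Your reduction in the second and third paragraphs has a genuine gap: it identifies the closed subscheme $k(Z)\subset M$ with $p^{-1}(Z)=Z\times_SM$. The claim $V=M-k(Z)=p^{-1}(U)$ is false in general. One only has $p^{-1}(U)\subset V$, because $k$ is merely a section of the smooth morphism $Z\times_SM\to Z$; the two sets differ whenever $p$ has positive relative dimension at some point of $k(Z)$.

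Consequently $p^*$ does not send $K(S\textrm{ on }Z)$ into $K(M\textrm{ on }Z)$. It lands in $K(M\textrm{ on }Z\times_SM)\simeq G(Z\times_SM)$, where it is the flat pullback $G(Z)\to G(Z\times_SM)$. Similarly, restricting $p^*\colon\operatorname{Map}_{\DK(S)}(\one_S,\one_S)\to\operatorname{Map}_{\DK(M)}(\one_M,\one_M)$ to fibres gives a map $\DK^{BM}(Z/S)\to\DK^{BM}(Z\times_SM/M)$, not a map to $\DK^{BM}(Z/M)$.

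In fact no map between your two fibre sequences with $p^*$ in the middle exists. Take $Z=S$, $M=\mathbb{A}^1_S$ and $k$ the zero section. Then $U=\emptyset$, so $\DK^{BM}(S/S)\to\operatorname{Map}_{\DK(S)}(\one_S,\one_S)\simeq K(S)$ is an equivalence, and $p^*\colon K(S)\to K(\mathbb{A}^1_S)$ is an equivalence. But the forgetful map $K(\mathbb{A}^1_S\textrm{ on }S)\to K(\mathbb{A}^1_S)$ is zero on $\pi_0$, since the zero section is a principal divisor. So \eqref{eq:DKpb} on the middle terms tells you nothing about the triangle in the lemma.

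The same example shows where the content of the lemma actually sits. For a vector bundle $p\colon E\to S$ with zero section $s$, the equivalence to be identified is
\begin{align}
K(S)\to K(E\textrm{ on }S),\qquad a\mapsto p^*a\cdot[s_*\mathcal{O}_S].
\end{align}
So the Bott-periodicity twist does not ``act trivially''. It is exactly the Thom isomorphism that moves supports from $Z\times_SM$ to the section $k(Z)$, and it carries all the content.

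Your last paragraph names the right ingredient, namely $\nu((T_p)_{|Z})$ restricting to the Koszul class of the zero section. But it is not attached to a valid framework. The immersion $k$ need not be regular ($Z$ may be singular), so any deformation argument has to be run on the section $Z\to Z\times_SM$. You would then need compatibility of \eqref{eq:CD13413} with specialization maps, which you do not establish; in the paper that compatibility is only obtained later, using the present lemma.

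The paper avoids this by rewriting
\begin{align}
\operatorname{Map}_{\DK(Z)}(\one_Z,k^!\one_M)\simeq\operatorname{Map}_{\DK(S)}(p_\#k_!\one_Z,\one_S)\simeq\operatorname{Map}_{\DK(S)}(p_!k_!\operatorname{Th}((T_p)_{|Z}),\one_S).
\end{align}
It then presents $\operatorname{Th}((T_p)_{|Z})$ as $\mathbb{P}((T_p)_{|Z}\oplus\mathcal{O}_Z)/\mathbb{P}((T_p)_{|Z})$, so that both sides become $G$-theory of projective bundles over $Z$. It concludes with the projective bundle formula for $G$-theory.
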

\proof
The functor $p^*$ for the smooth morphism $p$ has a left adjoint $p_\#$, and by adjunction and relative purity we have an isomorphism
\begin{align}
\label{eq:relpurr}
p_\#\simeq p_!\operatorname{Th}(T_p).
\end{align}
Therefore we obtain a commutative diagram
\begin{align}
\begin{split}
  \xymatrix@=10pt{
    \operatorname{Map}_{\DK(Z)}(\one_Z,k^!\one_M) \ar[r]^-{\eqref{eq:CD13413}}_-{\sim} \ar[d]^-{\wr} & G(Z) \ar[ddd] \\
    \operatorname{Map}_{\DK(S)}(p_\#k_!\one_Z,\one_S) \ar[d]^-{\wr}_-{\eqref{eq:relpurr}} & \\
    \operatorname{Map}_{\DK(S)}(p_!(k_!\one_Z\otimes\operatorname{Th}(T_p)),\one_S) \ar[d]^-{\wr} & \\
     \operatorname{Map}_{\DK(S)}(p_!k_!\operatorname{Th}((T_p)_{|Z}),\one_S) \ar[d]_-{\eqref{eq:perK}}^-{\wr} \ar[r]^-{\sim} & \operatorname{fib}\left(G(\mathbb{P}((T_p)_{|Z}\oplus\mathcal{O}_Z)\to G(\mathbb{P}(T_p)_{|Z})\right) \ar[d] \\
     \operatorname{Map}_{\DK(Z)}(\one_Z,i^!\one_S) \ar[r]^-{\sim} & G(Z). \\
  }
\end{split}
\end{align}
The left vertical map agrees with~\eqref{eq:DKSM} by functoriality of relative purity, and the right vertical map is the identity map of $G(Z)$ by projective bundle formula for $G$-theory (\cite[\S7 4.3]{Qui}), which finishes the proof.
\endproof

\subsection{Proof of Theorem~\ref{thm:Grep}}
We first construct the equivalence~\eqref{eq:GBM}. First consider the case where $f:X\to S$ is smooth. In this case by representability of algebraic $K$-theory in $\SH$ (see \cite[(13.3.4.2)]{CD}), we obtain
\begin{align}
\begin{split}
\DK^{BM}(X/S)
=
\operatorname{Map}_{\DK(X)}(\one_X,f^!\one_S)
\overset{\eqref{eq:relpurK}}{\simeq}
\operatorname{Map}_{\DK(X)}(\one_X,\one_X)
\overset{\eqref{eq:CD13413}}{\simeq}
G(X).
\end{split}
\end{align}
Now assume that $X$ is quasi-projective over $S$. Then the structure morphism $f:X\to S$ factors as $X\xrightarrow{i}M\xrightarrow{p}S$, where $i$ is a closed immersion and $p$ is smooth. Then there is an isomrophism
\begin{align}
\label{eq:DKG}
\begin{split}
\DK^{BM}(X/S)
\overset{\eqref{eq:DKSM}}{\simeq}
\DK^{BM}(X/M)
\overset{\eqref{eq:CD13413}}{\simeq}
G(X).
\end{split}
\end{align}

\begin{lemma}
The equivalence~\eqref{eq:DKG} is, up to homotopy, independent of the factorization of $f$.
\end{lemma}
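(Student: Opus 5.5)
Given two factorizations $X\xrightarrow{i_1}M_1\xrightarrow{p_1}S$ and $X\xrightarrow{i_2}M_2\xrightarrow{p_2}S$ of $f$, the standard trick is to compare both with the diagonal factorization
\begin{align*}
X\xrightarrow{(i_1,i_2)}M_1\times_SM_2\xrightarrow{p_{12}}S .
\end{align*}
Here $(i_1,i_2)$ is a closed immersion, since $M_2\to S$ is separated, and $p_{12}$ is smooth. So it suffices to treat the case of a \emph{dominated} pair of factorizations: $X\xrightarrow{i'}M'\xrightarrow{q}M\xrightarrow{p}S$ with $q$ smooth, $i=q\circ i'$ a closed immersion, and $p'=p\circ q$. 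We will show that the isomorphisms~\eqref{eq:DKG} built from $(i,p)$ and from $(i',p')$ agree; applying this twice, once with $M_1\times_SM_2\to M_1$ and once with $M_1\times_SM_2\to M_2$, gives independence.

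In the dominated case, the isomorphism for $(i',p')$ is the composite
\begin{align*}
\DK^{BM}(X/S)\xrightarrow{\eqref{eq:DKSM}}\DK^{BM}(X/M')\xrightarrow{\eqref{eq:CD13413}}G(X).
\end{align*}
We first show that the equivalence~\eqref{eq:DKSM} for the smooth morphism $p'=p\circ q$ is homotopic to the composite of the equivalences~\eqref{eq:DKSM} for $p$ and for $q$, namely $\DK^{BM}(X/S)\simeq\DK^{BM}(X/M)\simeq\DK^{BM}(X/M')$. This is a transitivity statement for $I_r$ in Definition~\ref{def:Ir}: $I_{p'}(\alpha)$ and $I_p(I_q(\alpha))$ should coincide. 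It follows from $p'_!=p_!q_!$ together with the compatibility of the purity transformation~\eqref{eq:relpurK} with composition, which states that $\mathfrak{p}_{pq}$ is homotopic to the composite of $\mathfrak{p}_q$ and $\mathfrak{p}_p$. The traces~\eqref{eq:BMsm} are built from these purity maps, so they compose accordingly.

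With transitivity in hand, the problem reduces to the single-base situation of Lemma~\ref{lm:BMsmr}, taken over $M$ rather than $S$. The regular scheme is now $M$: it is smooth over the regular scheme $S$, hence regular. The closed immersion $i:X\to M$ factors as $X\xrightarrow{i'}M'\xrightarrow{q}M$ with $q$ smooth, and Lemma~\ref{lm:BMsmr} says exactly that the two maps $\DK^{BM}(X/M)\to G(X)$ agree: the direct~\eqref{eq:CD13413}, and~\eqref{eq:DKSM} for $q$ followed by~\eqref{eq:CD13413} over $M'$. Composing with the equivalence $\DK^{BM}(X/S)\simeq\DK^{BM}(X/M)$ gives the claim in the dominated case.

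The main obstacle is the transitivity step. It needs the canonical homotopy $\mathfrak{p}_{pq}\simeq\mathfrak{p}_p\circ\mathfrak{p}_q$ from the Corollary after Lemma~\ref{omega}, and the trace maps in~\eqref{eq:BMsm} must be checked to be compatible with it at the level of homotopies (\cite[4.3.1]{DJK}). Some care is also needed with the identification $\operatorname{Th}(T_{pq})\simeq\operatorname{Th}(q^*T_p)\otimes\operatorname{Th}(T_q)$ coming from the exact sequence of tangent bundles. Since we only need a homotopy and not coherence data, I would invoke this functoriality of the fundamental classes directly. The diagonal reduction and the application of Lemma~\ref{lm:BMsmr} are then formal.
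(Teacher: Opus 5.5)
Your proof is correct and follows essentially the same route as the paper: both pass through the diagonal factorization $X\xrightarrow{(i,i')}M\times_SM'$ and apply Lemma~\ref{lm:BMsmr} with base $M$ and with base $M'$. The only difference is that you spell out the transitivity $I_{p\circ q}\simeq I_p\circ I_q$ of the equivalences~\eqref{eq:DKSM}, which the paper's diagram needs for its upper part but uses without comment.
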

\proof
Let $X\xrightarrow{i'}M'\xrightarrow{p'}S$ be another factorization of $f$. Then we have a closed immersion $X\xrightarrow{(i,i')}M\times_SM'$, and by Lemma~\ref{lm:BMsmr}, we have the following commutative diagram:
\begin{align}
\begin{split}
  \xymatrix@=10pt{
  & \DK^{BM}(X/S) & \\
    \DK^{BM}(X/M) \ar[d]_-{\eqref{eq:CD13413}}^-{\wr} \ar[ru]^-{\eqref{eq:DKSM}}_-{\sim} & \DK^{BM}(X/M\times_SM') \ar[r]^-{\sim}_-{\eqref{eq:DKSM}} \ar[l]^-{\eqref{eq:DKSM}}_-{\sim} \ar[d]^-{\eqref{eq:CD13413}}_-{\wr} & \DK^{BM}(X/M') \ar[d]^-{\eqref{eq:CD13413}}_-{\wr} \ar[lu]^-{\sim}_-{\eqref{eq:DKSM}} \\
    G(X) \ar@{=}[r]_-{} & G(X) \ar@{=}[r]_-{} & G(X)
    }
\end{split}
\end{align}
which finishes the proof.
\endproof

\begin{lemma}
\label{lm:DKGrG}
Consider a Cartesian square
\begin{align}
\begin{split}
  \xymatrix@=10pt{
    X' \ar[r]^-{} \ar[d]_-{} & Y' \ar[d]^-{f} \\
    X \ar[r]^-{g} & Y
  }
\end{split}
\end{align}
where $g$ is quasi-projective lci, and let $r\colon Y'\to S$ be a quasi-projective morphism. Then the following diagram commutes:
\begin{align}
\label{eq:DKGrG}
\begin{split}
  \xymatrix@=10pt{
\DK^{BM}(Y'/S) \ar[r]^-{\eqref{eq:DKrG}} \ar[d]_-{\eqref{eq:DKG}}^-{\wr} & \DK^{BM}(X'/S) \ar[d]^-{\eqref{eq:DKG}}_-{\wr} \\
G(Y') \ar[r]^-{\eqref{eq:refGys}} & G(X').
    }
\end{split}
\end{align}
\end{lemma}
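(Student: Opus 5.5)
We reduce to the case where everything lives inside a single smooth ambient scheme over $S$, so that both rows of \eqref{eq:DKGrG} can be described by the same formal construction on both sides. Since $r$ and $g$ are quasi-projective, we factor $r$ as $Y'\xrightarrow{k}P\xrightarrow{q}S$ with $k$ a closed immersion and $q$ smooth. By Definition~\ref{def:Ir}, Lemma~\ref{lm:BMsmr} and the independence of \eqref{eq:DKG} from the factorization, we may replace $S$ by $P$. The refined Gysin map \eqref{eq:rGKBM} is defined by applying $r_!$ to a map over $Y'$, so it is compatible with the identification \eqref{eq:DKSM}. Hence we may assume that $r$ is a closed immersion into the regular scheme $S$. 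In this situation both vertical maps are the identifications \eqref{eq:CD13413} with $K$-theory with supports, $K(S\textrm{ on }Y')\simeq G(Y')$ and $K(S\textrm{ on }X')\simeq G(X')$.

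Next we factor $g$ as a regular closed immersion $X\to V$ followed by a smooth quasi-projective map $V\to Y$. Refined Gysin maps in $\DK$ are compatible with composition, by the composition property of $\mathfrak p$ in \eqref{eq:relpurK}. On the $G$-theory side, the refined Gysin map for a smooth $g$ is just flat pullback, and the lci case is defined as the composite, following \cite[\S6.6]{Ful}. So we treat two cases separately.

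\emph{Smooth case.} Here \eqref{eq:DKrG} is induced by $\mathfrak p_g$ and is identified with the smooth pullback $G(Y')\to G(X')$. This follows from the diagram \eqref{eq:DKpb} combined with relative purity, using Lemma~\ref{lm:BMsmr} to pass to an ambient smooth $S$-scheme containing $X'$. Then \eqref{eq:DKGrG} reduces to the compatibility of \eqref{eq:CD13413} with smooth pullback of $K$-theory with supports.

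\emph{Regular closed immersion case.} This is the main step. We use deformation to the normal cone on both sides. In $\DK$, the map $\mathfrak p_g$ for a regular closed immersion is constructed in \cite[4.3.1, 4.5.6]{DJK} through the specialization map \eqref{eq:spKBM} attached to the deformation space $D'$ and the principal divisor $N'\subset D'$, followed by the Thom isomorphism \eqref{eq:perK} for $N'\to X'$. On the $G$-side, Corollary~\ref{cor:spnew} gives the same shape: $\operatorname{sp}'$ composed with $(\pi^*)^{-1}$. It therefore suffices to check three compatibilities under \eqref{eq:CD13413}:
\begin{enumerate}
\item the localization triangle \eqref{eq:KBMloc} corresponds to the fiber sequence \eqref{eq:fibG}, including the boundary maps;
\item multiplication by $t$ corresponds to $\gamma_t$, which with Lemma~\ref{lm:spano} identifies the specialization maps;
\item the Thom isomorphism \eqref{eq:perK} corresponds to $(\pi^*)^{-1}$.
\end{enumerate}
Item (1) follows because \eqref{eq:CD13413} is itself defined via homotopy fibers of restriction to open complements. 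Item (3) follows from the projective bundle argument already used in Lemma~\ref{lm:BMsmr}.

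I expect item (2), matching the boundary map and the $t$-multiplication, to be the main obstacle. For this I would use that \eqref{eq:DKone} is an equivalence of $K(S)$-modules. The class $t\in\Omega K(\mathbb{Z}[t,t^{-1}])$ then acts compatibly on both sides, and the boundary maps agree because both come from the same fiber sequence of mapping spectra.
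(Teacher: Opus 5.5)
Your proposal is correct and follows essentially the same route as the paper. You reduce to $g$ smooth (handled by \eqref{eq:DKpb} with supports) or a regular closed immersion; in the latter case you split the refined Gysin map into pullback to $Y'\times\mathbb{G}_m$, the specialization \eqref{eq:spnew} to $N'$, and the inverse of $\pi^*$, with the specialization step reduced to compatibility of \eqref{eq:DKG} with the localization triangle \eqref{eq:KBMloc}. The only variation is that you describe specialization as $\partial\circ\gamma_t$ via Lemma~\ref{lm:spano} instead of the paper's factorization \eqref{eq:Gcon}, so you also need compatibility with multiplication by $t$. Since $t\notin K(S)$, that should come from linearity of \eqref{eq:CD13413} over $K$ of the regular ambient scheme $S\times\mathbb{G}_m$, not merely over $K(S)$.
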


\proof
We may assume that $g$ is a regular closed immersion or a quasi-projective smooth morphism. If $g$ is smooth, the result follows from the commutative diagram~\eqref{eq:DKpb}, and indeed a version with support as in~\ref{num:BMGcl}.

Now assume that $g$ is a regular closed immersion. Let $N'=N_XY\times_YY'$. Following the refined constructions in Definitions~\ref{def:refGys} and~\ref{def:refDK} and Lemma~\ref{cor:spnew}, we break the diagram~\eqref{eq:DKGrG} into $3$ parts:
\begin{align}
%\label{eq:DKGrG}
\begin{split}
  \xymatrix@=10pt{
\DK^{BM}(Y'/S) \ar[r]^-{\eqref{eq:BMsm}} \ar[d]_-{\eqref{eq:DKG}}^-{\wr} & \DK^{BM}(Y'\times\mathbb{G}_m/S) \ar[r]^-{\eqref{eq:spKBM}} \ar[d]_-{\eqref{eq:DKG}}^-{\wr} & \DK^{BM}(N'/S) \ar[d]_-{\eqref{eq:DKG}}^-{\wr} & \DK^{BM}(X'/S) \ar[d]^-{\eqref{eq:DKG}}_-{\wr} \ar[l]_-{\eqref{eq:BMsm}}^-{\sim} \\
G(Y') \ar[r]^-{\eqref{eq:Gpb}} & G(Y'\times\mathbb{G}_m) \ar[r]^-{\eqref{eq:spnew}} & G(N') & G(X'). \ar[l]_-{\eqref{eq:Gpb}}^-{\sim}
    }
\end{split}
\end{align}
The left and right squares commute by the smooth case above. For the commutativity of the middle square, one need to show the compatibility of the equivalence~\eqref{eq:DKG} with specializations. Tracking back the construction in~\ref{num:spmap}, this follows from the compatibility of the equivalence~\eqref{eq:DKG} with the localizaiton triangle~\eqref{eq:KBMloc}: this follows from the smooth case above and the case of closed immersions in Lemma~\ref{lm:DKGpf} below.
\endproof

\begin{lemma}
\label{lm:DKGpf}
Let $g:Y\to X$ be a projective morphism. Then the following diagram commutes:
\begin{align}
\begin{split}
  \xymatrix@=10pt{
\DK^{BM}(Y/S) \ar[r]^-{\eqref{eq:BMprop}} \ar[d]_-{\eqref{eq:DKG}}^-{\wr} & \DK^{BM}(X/S) \ar[d]^-{\eqref{eq:DKG}}_-{\wr} \\
G(Y) \ar[r]^-{\eqref{eq:Gpf}} & G(X).
    }
\end{split}
\end{align}
\end{lemma}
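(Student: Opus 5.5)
Since $g$ is projective, it factors as $Y\xrightarrow{k}\mathbb{P}^n_X\xrightarrow{q}X$ with $k$ a closed immersion and $q$ the canonical projection. Both push-forwards~\eqref{eq:BMprop} and~\eqref{eq:Gpf} are functorial for composition, so it suffices to prove the commutativity separately for $k$ and for $q$. The same factorization trick is then used for the structure maps. Choose a factorization $X\xrightarrow{i}M\xrightarrow{p}S$ with $i$ a closed immersion and $p$ smooth. Then $Y\to\mathbb{P}^n_X\to\mathbb{P}^n_M\to S$ gives a compatible factorization of $Y$, with $Y$ closed in the smooth $S$-scheme $\mathbb{P}^n_M$. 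By the independence of~\eqref{eq:DKG} from the factorization, we may compute all identifications relative to these chosen ambient smooth schemes.

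\emph{Closed immersions.} Let $k:Y\to X$ be a closed immersion and $X\subset M$ closed with $M$ smooth over $S$. Via~\eqref{eq:DKSM} we reduce to $\DK^{BM}(-/M)$, where $M$ is regular. By~\eqref{eq:CD13413}, both $\DK^{BM}(Y/M)$ and $\DK^{BM}(X/M)$ are identified with $K(M\textrm{ on }Y)$ and $K(M\textrm{ on }X)$, namely as fibers of $K(M)\to K(M-Y)$ and $K(M)\to K(M-X)$. The push-forward~\eqref{eq:BMprop} for $k$ corresponds, under the adjunction $i_!\dashv i^!$, to the forget-support map $K(M\textrm{ on }Y)\to K(M\textrm{ on }X)$ induced by the restriction $K(M-X)\to K(M-Y)$ on the fiber sequences. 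On the $G$-side, the equivalence~\eqref{eq:KsuppG} turns this map into $k_*$, since the Thomason--Trobaugh identification sends a perfect complex supported on $Y$ to its coherent homology and is compatible with enlarging the support (\cite[3.21]{TT}). It follows that the square commutes.

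\emph{Projections $q:\mathbb{P}^n_X\to X$.} Here $\DK^{BM}(\mathbb{P}^n_X/S)\simeq\DK^{BM}(\mathbb{P}^n_X/\mathbb{P}^n_M)$, and both sides satisfy a projective bundle formula. On the $G$-side we use $G(\mathbb{P}^n_X)\simeq\bigoplus_{j=0}^n G(X)$ via $a\mapsto\sum\mathcal{O}(-j)\cdot q^*a_j$ (\cite[\S7 4.3]{Qui}), and $q_*$ is computed by $q_*(\mathcal{O}(-j)\cdot q^*a)=\chi(\mathbb{P}^n,\mathcal{O}(-j))\,a$, which equals $a$ for $j=0$ and $0$ for $0<j\le n$. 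On the $\DK$-side we use $\KGL$-linearity: the equivalence~\eqref{eq:DKG} is one of $K(S)$-modules and is compatible with smooth pullback by~\eqref{eq:DKpb}. It is therefore also compatible with multiplication by $[\mathcal{O}(-j)]\in K(\mathbb{P}^n_M)$ and with the Gysin pullback $q^*$ (Lemma~\ref{lm:DKGrG} in the smooth case). This reduces the claim to checking that the $\DK$ push-forward $q_*$ satisfies the same identities, which in turn reduces to the single statement that the trace map $q_!\one_{\mathbb{P}^n}\to\one$ (the composite of~\eqref{eq:BMprop} with the purity/periodicity isomorphism~\eqref{eq:perK}) corresponds to $\chi(\mathbb{P}^n,-)$ on $K_0$.

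This last point is the main obstacle. I would handle it by reducing to $S=\operatorname{Spec}\mathbb{Z}$, or to the universal base over which everything is pulled back, using base change~\eqref{eq:BMBC}. There I would compute the composite $\one\to q_*\one\simeq q_!\one\to\one$ through Lemma~\ref{omega} and~\eqref{Thom_proj_bundle}, checking that the Bott class $\nu(\mathcal{E})$ used in~\eqref{eq:perK} is exactly the Koszul/Thom class whose pushforward in $G$-theory gives the Euler characteristic. Equivalently, via~\eqref{Thom_proj_bundle} it suffices to treat the zero section of $\mathbb{P}^n$ inside $\mathbb{P}(\mathcal{O}^{n}\oplus\mathcal{O})$, and this is already covered by the closed-immersion case together with the identification of $\nu$ with the Koszul resolution of the point. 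If this direct route proves awkward, I would instead invoke the compatibility of the six-functor trace with Riou's construction of $\KGL$ (\cite{Rio10}, \cite[\S13]{CD}).
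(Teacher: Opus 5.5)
Your decomposition and your treatment of both cases follow the paper's proof. Closed immersions go through the support identification~\eqref{eq:CD13413} and Lemma~\ref{lm:BMsmr}. The projection $q\colon\mathbb{P}^n_X\to X$ goes through the projective bundle basis $z^j=[\mathcal{O}(-j)]$, Lemma~\ref{lm:DKGrG} for $q^*$, and the projection formula. The point you single out is that the $\DK$ push-forward of $z^j$ equals $\chi(\mathbb{P}^n,\mathcal{O}(-j))$, which is $1$ for $j=0$ and $0$ for $1\le j\le n$. The paper uses exactly this tacitly when it says that, after the projection formula, ``the big composed square commutes''. So you have correctly located the one non-formal input. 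A minor correction: compatibility of~\eqref{eq:DKG} with multiplication by $[\mathcal{O}(-j)]\in K(\mathbb{P}^n_M)$ does not follow from $K(S)$-linearity plus smooth pullback. It comes from $K(\mathbb{P}^n_M)$-linearity of~\eqref{eq:CD13413}, i.e.\ from multiplicativity of~\eqref{eq:DKone}.

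Your plan for that input does not close it, for two reasons.
\begin{enumerate}
\item The Thom space of $T_q$ is not $\mathbb{P}(\mathcal{O}^n\oplus\mathcal{O})/\mathbb{P}(\mathcal{O}^n)$, because $T_{\mathbb{P}^n}$ is not trivial. So there is no ``zero section of $\mathbb{P}^n$ inside $\mathbb{P}(\mathcal{O}^n\oplus\mathcal{O})$'' to reduce to.
\item More seriously, the closed-immersion case plus functoriality of push-forwards cannot by itself determine the answer. Work over $\operatorname{Spec}\mathbb{Z}$, write $c_j\in\mathbb{Z}$ for the $\DK$ push-forward of $z^j$, and write $\deg(\mathbb{P}^k)$ for the $\DK$ push-forward of $1$ along $\mathbb{P}^k\to\operatorname{Spec}\mathbb{Z}$. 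A rational point gives $q_*\bigl((1-z)^n\bigr)=1$. Linear subspaces of codimension $m$, with class $(1-z)^m$, give $q_*\bigl((1-z)^m\bigr)=\deg(\mathbb{P}^{n-m})$. Inductively this never pins down $c_0=q_*(1)$; for $n=1$ you only get $c_0-c_1=1$.
\end{enumerate}
Checking that $\nu$ is the Koszul class is necessary, but it does not by itself compute the counit $q_!q^!\one\to\one$.

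One way to supply the missing relation with tools you already have is the diagonal $\Delta$. Beilinson's resolution gives $[\mathcal{O}_\Delta]=\sum_j(-1)^j\,p_1^*[\Omega^j(j)]\cdot p_2^*z^j$ in $K_0(\mathbb{P}^n_{\mathbb{Z}}\times\mathbb{P}^n_{\mathbb{Z}})$. Apply $p_{1*}$ on the $\DK$ side and use:
\begin{enumerate}
\item $p_{1*}\Delta_*=\mathrm{id}$;
\item the closed-immersion case for $\Delta$;
\item the projection formula;
\item the base change $p_{1*}p_2^*=q^*q_*$.
\end{enumerate}
This yields $\sum_j(-1)^jc_j[\Omega^j(j)]=1$. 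Since the $[\Omega^j(j)]$ form a $\mathbb{Z}$-basis of $K_0(\mathbb{P}^n_{\mathbb{Z}})$, this forces $c_0=1$ and $c_j=0$ for $j\ge1$. For $n=1$ it is simply $[\mathcal{O}_\Delta]=1-p_1^*z\cdot p_2^*z$. In this argument, the normalization of $\nu$ you worry about enters only through Lemma~\ref{lm:BMsmr} and the independence of the factorization. That is where one genuinely has to check that $\nu$ is the Koszul class of the zero section.
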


\proof
Choose a factorization of $f$ as $X\xrightarrow{i}M\xrightarrow{p}S$, where $i$ is a closed immersion and $p$ is smooth. 
We may assume that $g$ is a closed immersion or the projection of a projective space. If $g$ is a closed immersion, by localization sequence~\eqref{eq:fibG}, it follows from the construction in~\ref{num:BMGcl} that we have a commutative diagram
\begin{align}
\begin{split}
  \xymatrix@=10pt{
\DK^{BM}(Y/M) \ar[r]^-{\eqref{eq:BMprop}} \ar[d]_-{\eqref{eq:CD13413}}^-{\wr} & \DK^{BM}(X/M) \ar[d]^-{\eqref{eq:CD13413}}_-{\wr} \\
G(Y) \ar[r]^-{\eqref{eq:Gpf}} & G(X)
    }
\end{split}
\end{align}
and the result follows from Lemma~\ref{lm:BMsmr}.

Now assume $Y=\mathbb{P}^n_X$. Let $z$ be the point of the $K$-theory spectrum $K(\mathbb{P}^n_X)$ corresponding to the sheaf $\mathcal{O}(-1)$. Consider the following diagram
\begin{align}
\label{eq:totPnDK}
\begin{split}
  \xymatrix@=10pt{
\oplus_{i=0}^n\DK^{BM}(X/S) \ar[r]^-{\sim} \ar[d]_-{\eqref{eq:DKG}}^-{\wr} &\DK^{BM}(\mathbb{P}^n_X/S) \ar[r]^-{\eqref{eq:BMprop}} \ar[d]_-{\eqref{eq:DKG}}^-{\wr} & \DK^{BM}(X/S) \ar[d]^-{\eqref{eq:DKG}}_-{\wr} \\
\oplus_{i=0}^nG(X) \ar[r]^-{\sim}& G(\mathbb{P}^n_X) \ar[r]^-{\eqref{eq:Gpf}} & G(X)
    }
\end{split}
\end{align}
where the two horizontal maps on the left are given by 
\begin{align}
(x_i)_{0\leqslant i\leqslant n}
\mapsto
\sum_{i=0}^nz^i\cdot g^*x_i.
\end{align}
By projective bundle formula for $G$-theory (\cite[\S7 4.3]{Qui}), these maps are isomorphisms. By Lemma~\ref{lm:DKGrG}, the square on the left commutes; by projection formula, $g_*(z^i\cdot g^*x_i)=g_*z^i\cdot x_i$, and the big composed square in~\eqref{eq:totPnDK} commutes, and the result follows.
\endproof

\begin{lemma}
\label{lm:DKG0prod}
Let $S$ be a smooth quasi-projective scheme over a field $k$. Then the equivalence~\eqref{eq:GBM} is compatible with products on $G_0$ in Definition~\ref{def:G0prod}. In other words, if $X$ and $Y$ are two quasi-projective $S$-schemes, we have a commutative diagram
\begin{align}
\label{eq:diagprodG0}
\begin{split}
  \xymatrix@=10pt{
    \DK^{BM}_0(X/S)\times\DK^{BM}_0(Y/S) \ar[r]^-{\eqref{eq:DKprod}} \ar[d]_-{\eqref{eq:DKG}}^-{\wr} & \DK^{BM}_0(X\times_SY/S) \ar[d]^-{\eqref{eq:DKG}}_-{\wr} \\
    G_0(X)\times G_0(Y) \ar[r]^-{\eqref{eq:GproductTor}} & G_0(X\times_SY).
  }
\end{split}
\end{align}
\end{lemma}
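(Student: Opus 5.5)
We reduce the product to an exterior product followed by a refined Gysin pullback along a diagonal, and compare each piece with the corresponding $G$-theoretic operation. Let $r\colon S\to\operatorname{Spec}k$ be the structure morphism; it is smooth, so the diagonal $\delta\colon S\to S\times_kS$ is a regular immersion. We form the Cartesian square with top row $X\times_SY\to X\times_kY$ and projection $\pi\colon X\times_kY\to S\times_kS$. By Definition~\ref{def:Ir} and Lemma~\ref{lem:tensor-refined-gysin}, the product~\eqref{eq:DKprod} over $S$, transported along the equivalences~\eqref{eq:DKSM} to Borel--Moore theory relative to $k$, factors as two steps. The first is the exterior product over $k$,
\begin{align*}
\DK^{BM}_0(X/k)\times\DK^{BM}_0(Y/k)\to\DK^{BM}_0(X\times_kY/k).
\end{align*}
The second is the refined Gysin map $R_\pi(\delta)\colon\DK^{BM}_0(X\times_kY/k)\to\DK^{BM}_0(X\times_SY/k)$. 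The equivalence~\eqref{eq:DKG} is defined through such transfers to a smooth base, so Lemma~\ref{lm:BMsmr} and the independence of factorization let us work with the equivalence relative to $k$ throughout.

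On the $G$-theory side we decompose the product of Definition~\ref{def:G0prod} the same way. The exterior product over $k$ sends $([\mathcal F],[\mathcal G])$ to $[\mathcal F\boxtimes_k\mathcal G]$. Lemma~\ref{lm:refGysexp} computes the refined Gysin morphism $\operatorname{G}_\pi(\delta)$ on $[\mathcal F\boxtimes_k\mathcal G]$ as
\begin{align*}
\sum_q(-1)^q\bigl[\Tor_q^{\mathcal O_S\otimes_k\mathcal O_S}(\mathcal F\otimes_k\mathcal G,\mathcal O_S)\bigr].
\end{align*}
Lemma~\ref{lm:Torr}, applied locally with $A=k$ and $B=\mathcal O_S$, identifies this with $\sum_q(-1)^q[\Tor_q^{\mathcal O_S}(\mathcal F,\mathcal G)]$, which is exactly~\eqref{eq:GproductTor}. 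Lemma~\ref{lm:DKGrG} gives the compatibility of~\eqref{eq:DKG} with $R_\pi(\delta)$ and $\operatorname{G}_\pi(\delta)$. It therefore remains to prove compatibility for the exterior product over a field $k$.

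This last step is the main obstacle. First we reduce to the case where the classes are pushed forward from smooth schemes. Choose closed embeddings $X\subset M$ and $Y\subset N$ with $M,N$ smooth over $k$ (for instance open subsets of projective spaces). The product in $\DK$ is compatible with proper push-forwards by the projection formulas in~\cite[2.2.7]{DJK}. On the $G$ side the compatibility is immediate: since $k$ is a field, $i_*\mathcal F\boxtimes i'_*\mathcal G=(i\times i')_*(\mathcal F\boxtimes\mathcal G)$. Lemma~\ref{lm:DKGpf} then reduces us to $X=M$ and $Y=N$ smooth, provided we keep track of supports. Concretely, we treat the product as the product of $K(M\text{ on }X)\wedge K(N\text{ on }Y)\to K(M\times N\text{ on }X\times Y)$, identified with $G$-theory by~\eqref{eq:CD13413}.

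In the smooth case, $\DK^{BM}(M/k)\simeq\operatorname{Map}(\one_M,\one_M)\simeq K(M)$ via~\eqref{eq:relpurK}. The product~\eqref{eq:DKprod} becomes $\operatorname{pr}_1^*a\cdot\operatorname{pr}_2^*b$, using the ring structure of $\KGL$ together with the multiplicativity of purity isomorphisms for the smooth morphisms $M\times N\to N\to k$ (the composition statement in the Corollary to Lemma~\ref{omega}). Since $\KGL$ represents $K$-theory as a ring spectrum~\cite{NSO}, and the map~\eqref{eq:DKpb} is multiplicative, this product corresponds on $K_0$ to the tensor product of vector bundles. The supported version, with $K(M\text{ on }X)$, is handled in the same way, because the fiber sequences are compatible with the module structures. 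Finally, on $G_0$ of a regular scheme, the product of perfect complexes $\operatorname{pr}_1^*\mathcal F\otimes^{\mathbf L}\operatorname{pr}_2^*\mathcal G$ equals $[\mathcal F\boxtimes_k\mathcal G]$ by flatness over $k$, which completes the comparison.
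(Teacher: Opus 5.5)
Your proposal is correct and follows essentially the same route as the paper. You reduce from $S$ to $k$ via Lemma~\ref{lem:tensor-refined-gysin}, Lemma~\ref{lm:DKGrG} and the $\operatorname{Tor}$ computation of Lemmas~\ref{lm:refGysexp} and~\ref{lm:Torr}. Over $k$, you treat the smooth case via the ring structure of $\KGL$ and the singular case via products with supports in smooth ambient schemes $M,N$.

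One caution: pushing forward along $X\subset M$, $Y\subset N$ (Lemma~\ref{lm:DKGpf}) would not by itself reduce to the smooth case, because $G_0(X\times_kY)\to G_0(M\times_kN)$ need not be injective. What actually carries the argument is the supported product $K(M\text{ on }X)\wedge K(N\text{ on }Y)\to K(M\times_kN\text{ on }X\times_kY)$, which is exactly the paper's ``version with supports''.
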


\proof
First consider the case where $S=k$. In other words, we need to show that the following diagram commutes:
\begin{align}
\begin{split}
  \xymatrix@=10pt{
    \DK^{BM}_0(X/k)\times\DK^{BM}_0(Y/k) \ar[r]^-{\eqref{eq:DKprod}} \ar[d]_-{\eqref{eq:DKG}}^-{\wr} & \DK^{BM}_0(X\times_kY/k) \ar[d]^-{\eqref{eq:DKG}}_-{\wr} \\
    G_0(X)\times G_0(Y) \ar[r]^-{\eqref{eq:GproductTor}} & G_0(X\times_kY).
  }
\end{split}
\end{align}
If both $X$ and $Y$ are smooth, the result follows from the following commutative diagram:
$$
  \xymatrix@=10pt{
    \DK^{BM}(X/k)\times\DK^{BM}(Y/k) \ar[r]^-{\eqref{eq:DKrG}} \ar[d]_-{\eqref{eq:DKG}}^-{\wr} & \DK^{BM}(X\times_kY/k)\times\DK^{BM}(X\times_kY/k) \ar[d]_-{\eqref{eq:DKG}}^-{\wr} \ar[r]^-{} & \DK^{BM}(X\times_kY/k) \ar[d]^-{\eqref{eq:DKG}}_-{\wr} \\
    G(X)\times G(Y) \ar[r]^-{\eqref{eq:Gpb}} & G(X\times_kY)\times G(X\times_kY) \ar[r]^-{} & G(X\times_kY).
  }
$$
Here note that $G$-theory for smooth schemes agree with $K$-theory by~\eqref{eq:KtoG}, and the horizontal maps of the square on the right arise from the ring structure of $\KGL$ and $K_0$; the square on the left commutes by Lemma~\ref{lm:DKGrG}, and the square on the right commutes by \cite[Example 2.4.1]{Rio10}.

If $X$ and $Y$ are singular, choose closed immersions $X\to M$ and $Y\to N$ with $M$ and $N$ smooth. The result follows from a version with supports of the smooth case. Indeed, we have the following commutative square
$$
  \xymatrix@=10pt{
    \DK^{BM}_0(X/k)\times\DK^{BM}_0(Y/k) \ar[r]^-{\eqref{eq:DKrG}} \ar[d]_-{\eqref{eq:DKG}}^-{\wr} & \DK^{BM}_0(X\times_kN/k)\times\DK^{BM}_0(M\times_kY/k) \ar[d]_-{\eqref{eq:DKG}}^-{\wr} \ar[r]^-{} & \DK^{BM}_0(X\times_kY/k) \ar[d]^-{\eqref{eq:DKG}}_-{\wr} \\
    G_0(X)\times G_0(Y) \ar[r]^-{\eqref{eq:Gpb}} \ar@/_1pc/[rr]_-{\eqref{eq:GproductTor}} & G_0(X\times_kN)\times G_0(M\times_kY) \ar[r]^-{} & G_0(X\times_kY).
  }
$$
The square on the left commutes by Lemma~\ref{lm:DKGrG}, and the square on the right commutes by a version of \cite[Example 2.4.1]{Rio10} with supports. It is straightforward that the lower half-circle commutes, and the result follows.

We now turn to the general case. The result follows from the following commutative diagram:
$$
  \xymatrix@=10pt{
   \DK^{BM}_0(X/S)\times\DK^{BM}_0(Y/S) \ar[rr]^-{\eqref{eq:DKprod}} \ar[d]_-{\eqref{eq:DKSM}}^-{\wr} \ar@/_7pc/[dd]_-{\eqref{eq:DKSM}}^-{\wr} & & \DK^{BM}_0(X\times_SY/S) \ar[d]^-{\eqref{eq:DKSM}}_-{\wr} \ar@/^5pc/[dd]^-{\eqref{eq:DKSM}}_-{\wr} \\
   \DK^{BM}_0(X/k)\times\DK^{BM}_0(Y/k) \ar[r]^-{\eqref{eq:DKprod}} \ar[d]_-{\eqref{eq:DKG}}^-{\wr} &\DK^{BM}_0(X\times_kY/k) \ar[r]^-{\eqref{eq:rGKBM}} \ar[d]_-{\eqref{eq:DKG}}^-{\wr} & \DK^{BM}_0(X\times_SY/k) \ar[d]^-{\eqref{eq:DKG}}_-{\wr}\\
   G_0(X)\times G_0(Y) \ar[r]^-{\eqref{eq:GproductTor}} \ar@/_1pc/[rr]_-{\eqref{eq:GproductTor}} & G_0(X\times_kY) \ar[r]^-{\eqref{eq:refGys}} & G_0(X\times_SY).
  }
$$
Here the horizontal maps in the lower right square are refined Gysin maps associated to the Cartesian square
\begin{align}
\begin{split}
  \xymatrix@=10pt{
    X\times_SY \ar[r]^-{} \ar[d]_-{} & X\times_kY \ar[d]^-{f} \\
    S \ar[r]^-{\delta} & S\times_kS.
  }
\end{split}
\end{align}
\begin{enumerate}
\item
The left and right half-circles follow from Lemma~\ref{lm:BMsmr}.
\item
The upper pentagon follows from Lemma~\ref{lem:tensor-refined-gysin}.
\item
The lower left square is the case $S=k$ above.
\item
The lower right square follow from Lemma~\ref{lm:DKGrG}.
\item
For the lower half-circle, by Lemma~\ref{lm:refGysexp} we have
\begin{align}
\Bigl[\mathcal F\boxtimes_S\mathcal G\Bigr]
=
R_f(\delta)
\Bigl[\mathcal F\boxtimes_k\mathcal G\Bigr]
=
\sum_{q\ge0}
(-1)^q
\Bigl[
\operatorname{Tor}^{\mathcal O_{S\times_k S}}_q
\bigl(
\mathcal F\boxtimes_k\mathcal G,
\mathcal O_S
\bigr)
\Bigr]
\end{align}
and the result then follows from the following formula by~\eqref{eq:Torr}:
\begin{align}
\operatorname{Tor}^{\mathcal O_{S\times_k S}}_q
\bigl(
\mathcal F\boxtimes_k\mathcal G,
\mathcal O_S
\bigr)
\simeq
\operatorname{Tor}^{\mathcal O_S}_q
(\mathcal F,\mathcal G).
\end{align}
\end{enumerate}
\endproof

More generally, Lemma~\ref{lm:DKG0prod} holds for any regular scheme $S$:
\begin{lemma}
\label{lm:DKG0prodd}
Let $S$ be a regular scheme, and let $X$ and $Y$ be two quasi-projective $S$-schemes. Then the diagram~\ref{eq:diagprodG0} commutes.
\end{lemma}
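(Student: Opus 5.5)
The plan is to reduce to a base over which we have already proved compatibility, and then transport the result back through the isomorphisms \eqref{eq:DKSM}, exactly as in the last step of the proof of Lemma~\ref{lm:DKG0prod}. The question is local on $S$ for the Zariski topology. The reason is that $G_0$ classes and both products are compatible with open restriction: the restriction of \eqref{eq:GproductTor} to an open is again \eqref{eq:GproductTor}, by flat base change of $\operatorname{Tor}$. However, $G_0$ does not satisfy Zariski descent on the nose. So rather than localizing, I would run the argument of Lemma~\ref{lm:DKG0prod} with the field $k$ replaced by a base over which everything is known, namely the absolute base $\operatorname{Spec}\mathbb{Z}$, or more generally a regular base $B$ with $S$ smooth over $B$.

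\textbf{Step 1: the base $\operatorname{Spec}\mathbb{Z}$.} Every regular $S$ of finite type over $\mathbb{Z}$ is not smooth over $\mathbb{Z}$, so instead I would replace $k$ by $S$ itself and $S\times_kS$ by nothing. Concretely, embed $X\to M$ and $Y\to N$ with $M,N$ smooth quasi-projective over $S$; these exist since $X,Y$ are quasi-projective. Then $M\times_SN$ is regular and $X\times_SY\subset M\times_SN$ is closed.

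\textbf{Step 2: smooth $X$ and $Y$.} When $X,Y$ are smooth over $S$, the argument for $S=k$ goes through verbatim. The product is pullback to $X\times_SY$ along the two projections, which are smooth, compatible by Lemma~\ref{lm:DKGrG}, followed by the ring multiplication of $\KGL$, compatible by \cite[Example 2.4.1]{Rio10}. The Tor formula reduces to $[\mathcal F\otimes\mathcal G]$ because these are vector bundle classes flat over $S$.

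\textbf{Step 3: singular $X$ and $Y$.} For general $X,Y$ I would use the version with supports, as in Lemma~\ref{lm:DKG0prod}. Both $\DK^{BM}(X/S)\simeq K(M\textrm{ on }X)$ and $G_0(X)$ are computed inside $M$, and the product with supports $K(M\textrm{ on }X)\wedge K(N\textrm{ on }Y)\to K(M\times_SN\textrm{ on }X\times_SY)$ is compatible with \eqref{eq:DKprod} by the supported version of \cite[Example 2.4.1]{Rio10}. It then remains to check on the $G$-side that the supported product corresponds to \eqref{eq:GproductTor}. This amounts to computing the external product of perfect complexes $P\to\mathcal F$ on $M$ and $Q\to\mathcal G$ on $N$. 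The product $P\boxtimes_{\mathcal O_S}Q$ is a perfect complex on $M\times_SN$. Because $M,N$ are flat over $S$, its homology is $\operatorname{Tor}^{\mathcal O_S}_*(\mathcal F,\mathcal G)$, and the Euler characteristic gives the formula, using Lemma~\ref{lm:Tortens} for finiteness and boundedness.

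\textbf{Main obstacle.} The main difficulty is the identification in Step 3: the class of $P\boxtimes_SQ$ in $K_0(M\times_SN\textrm{ on }X\times_SY)\simeq G_0(X\times_SY)$ must be matched with the $\operatorname{Tor}$-sum. First, I would resolve $\mathcal F$ by locally free $\mathcal O_M$-modules, which are $S$-flat. Then $P\otimes_{\mathcal O_S}Q$ computes $\mathcal F\otimes^{\mathbf L}_{\mathcal O_S}\mathcal G$. Finally, the equivalence \eqref{eq:KsuppG} sends a perfect complex supported on $X\times_SY$ to the alternating sum of its homology sheaves. Each of these homology sheaves is a coherent $\mathcal O_{X\times_SY}$-module by Lemma~\ref{lm:Tortens}(1),(3), after using a filtration by powers of the ideal if necessary, which is harmless in $G_0$ by dévissage.
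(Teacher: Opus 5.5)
Your proposal is correct and is essentially the paper's argument: you embed $X\subset M$ and $Y\subset N$ with $M,N$ smooth over $S$, invoke a supported version of \cite[Example 2.4.1]{Rio10}, and identify $\mathcal F_{|X\times_SN}\otimes^{\mathbf L}_{\mathcal O_{M\times_SN}}\mathcal G_{|M\times_SY}\simeq\mathcal F\otimes^{\mathbf L}_{\mathcal O_S}\mathcal G$ using $S$-flatness, which is exactly the paper's lower half-circle; the paper merely splits your external product with supports into the refined Gysin pullbacks of Lemma~\ref{lm:DKGrG} followed by a cup product with supports on $M\times_SN$. The Zariski-localization idea, the garbled Step 1 remark about smoothness over $\operatorname{Spec}\mathbb{Z}$ (false as literally stated, and never used), and the separate smooth case of Step 2 are unnecessary, since Step 3 with $M=X$, $N=Y$ already covers it, and can be dropped.
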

\proof
As in the proof of Lemma~\ref{lm:DKG0prod}, choose closed immersions $X\to M$ and $Y\to N$ with $M$ and $N$ smooth. The result follows from the following commutative square
$$
  \xymatrix@=10pt{
    \DK^{BM}_0(X/S)\times\DK^{BM}_0(Y/S) \ar[r]^-{\eqref{eq:DKrG}} \ar[d]_-{\eqref{eq:DKG}}^-{\wr} & \DK^{BM}_0(X\times_SN/S)\times\DK^{BM}_0(M\times_SY/S) \ar[d]_-{\eqref{eq:DKG}}^-{\wr} \ar[r]^-{} & \DK^{BM}_0(X\times_SY/S) \ar[d]^-{\eqref{eq:DKG}}_-{\wr} \\
    G_0(X)\times G_0(Y) \ar[r]^-{\eqref{eq:Gpb}} \ar@/_1pc/[rr]_-{\eqref{eq:GproductTor}} & G_0(X\times_SN)\times G_0(M\times_SY) \ar[r]^-{} & G_0(X\times_SY).
  }
$$
To see that the lower half-circle commutes, we use the fact that for $\mathcal{F}\in \operatorname{Coh}(X)$ and $\mathcal{G}\in \operatorname{Coh}(Y)$, we have
$\mathcal{F}_{|X\times_SN}\boxtimes^{\mathbf L}_{M\times_SN}\mathcal{G}_{|M\times_SY}\simeq \mathcal{F}\boxtimes^{\mathbf L}_{S}\mathcal{G}\in \operatorname{Coh}(X\times_SY)$.
\endproof

\section{The Chow weight structure in $\KGL$-modules}
\label{sec:Chowwt}
\subsection{}
(See \cite[Def. 3.1]{Sos}) Recall that a \textbf{weight structure} on a stable $\infty$-category $\mathcal C$ consists of two full subcategories $\mathcal C_{w\leq0}$ and $\mathcal C_{w\geq0}$ satisfying the following axioms:
\begin{enumerate}
\item
The subcategories $\mathcal C_{w\leqslant0}$ and $\mathcal C_{w\geqslant0}$ are stable under retracts, and
\begin{align}
\Sigma\mathcal C_{w\leqslant0}\subset \mathcal C_{w\leqslant0},
\qquad
\Omega\mathcal C_{w\geqslant0}\subset \mathcal C_{w\geqslant0}.
\end{align}
\item
For any objects $X\in\mathcal C_{w\leqslant0}$ and $Y\in\mathcal C_{w\geqslant0}$ we have
\begin{align}
\pi_0\operatorname{Map}_{\mathcal C}(X,\Sigma Y)=0.
\end{align}
\item
For any object $X\in\mathcal{C}$, there exist two objects $X_{\leqslant0}\in\mathcal C_{w\leqslant0}$ and $X_{\geqslant1}\in\Sigma\mathcal C_{w\geqslant0}$ and a fiber sequence in $\mathcal{C}$
\begin{align}
X_{\leqslant0}
\to
X
\to
X_{\geqslant1}.
\end{align}
\end{enumerate}
Its \textbf{heart} is the full subcategory $\mathcal C_{w=0}$ spanned by objects that lie in both $\mathcal C_{w\leq0}$ and $\mathcal C_{w\geq0}$. It is \textbf{bounded} if for any object $X\in\mathcal{C}$, there exists an integer $n$ such that $X$ lies in both $\Sigma^n\mathcal C_{w\leq0}$ and $\Omega^n\mathcal C_{w\geq0}$.

\begin{theorem}[\textrm{\cite[Th. 4.3.2]{Bon10}, \cite[Th. 1.9]{Heb}}]
\label{th:wtexist}
Let $\mathcal C$ be an idempotent complete stable $\infty$-category.
Let $\mathcal{H}$ be a collection of objects of $\mathcal C$, such that $\mathcal C$ is generated by $\mathcal{H}$ as a thick stable category (i.e. smallest full subcategory stable under retracts, extensions, suspensions and loops). %$\langle\mathcal{H}\rangle$ be the smallest collection of objects of $\mathcal C$ stable  that contains objects in $\mathcal{H}$. %Assume that every object of $$
Then the following are equivalent:
\begin{enumerate}
\item
There exists a weight structure on $\mathcal C$ such that every object of $\mathcal{H}$ lies in the heart $\mathcal C_{w=0}$;

\item
The collection $\mathcal{H}$ is \textbf{negative}, that is, for any two objects $A,B\in\mathcal C$ and any $n>0$ we have 
\begin{align}
\pi_{-n}\operatorname{Map}(A,B)=0.
\end{align}
\end{enumerate}
If this is the case, such a weight structure is unique and bounded, and the heart $\mathcal C_{w=0}$ agrees with the full subcategory generated by $\mathcal{H}$ under retracts and finite direct sums.
\end{theorem}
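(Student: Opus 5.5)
The approach is Bondarko's construction: build the weight structure from $\mathcal H$ by the standard "weight decomposition" argument, then use negativity for orthogonality. Note that the negativity hypothesis should be read for $A,B\in\mathcal H$, not for all of $\mathcal C$.

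For (1)$\Rightarrow$(2), suppose a weight structure exists with $\mathcal H\subset\mathcal C_{w=0}$. For $A,B\in\mathcal H$ and $n>0$ we have $\pi_{-n}\operatorname{Map}(A,B)=\pi_0\operatorname{Map}(A,\Sigma^nB)$. Since $\Omega\mathcal C_{w\ge0}\subset\mathcal C_{w\ge0}$, we get $\Sigma^{n-1}B\in\mathcal C_{w\geq0}$ for $n\geq1$. Axiom (2) applied to $A$ and $\Sigma^{n-1}B$ then gives the vanishing.

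For (2)$\Rightarrow$(1), I would proceed as follows.

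\emph{Definition of the two halves.} Let $\mathcal C_{w\le0}$ be the retract-closure of the extension-closure of $\{\Sigma^kH : H\in\mathcal H,\ k\ge0\}$. Define $\mathcal C_{w\ge0}$ dually, using $k\le0$.

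\emph{Axioms (1) and (2).} Axiom (1) holds by construction. For axiom (2), negativity gives $\pi_0\operatorname{Map}(\Sigma^kA,\Sigma^{1-l}B)=\pi_{k+l-1}\operatorname{Map}(A,B)=0$ whenever $k\geq0$ and $l\geq0$ with $k+l-1\ge0$, and also for $k=l=0$ via $\pi_{-1}$. Hence the vanishing holds on generators. It then passes to extensions by the long exact sequences of $\pi_*\operatorname{Map}$, and to retracts trivially.

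\emph{Axiom (3), the main obstacle.} Let $\mathcal D$ be the class of objects admitting a weight decomposition. It contains every $\Sigma^kH$: take $X_{\le0}=X$ if $k\ge0$, and $X_{\le0}=0$ otherwise. It is stable under shifts. For extensions $X\to Y\to Z$ with $X,Z\in\mathcal D$, I would use the octahedral axiom as in Bondarko's Lemma 1.5.4. Compose $Z_{\le0}\to Z\to\Sigma X\to\Sigma X_{\ge1}$. This map vanishes by orthogonality, so $Z_{\le0}\to\Sigma X$ lifts through $\Sigma X_{\le0}$. Taking fibers then produces $Y_{\le0}$ as an extension of $Z_{\le0}$ by $X_{\le0}$, and $Y_{\ge1}$ as an extension of $Z_{\ge1}$ by $X_{\ge1}$, using the $3\times3$ lemma in the stable $\infty$-category. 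Both halves are extension-closed, so this gives a decomposition of $Y$.

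\emph{Closure under retracts.} This is the genuinely delicate point. If $X$ is a retract of $Y\in\mathcal D$, I would use $Y\oplus\Sigma Y$-type tricks: $X\oplus\Sigma X'$ where $Y=X\oplus X'$, together with idempotent completeness of $\mathcal C$. Following Bondarko, the retract class arises as $X_{\le0}$ being a retract of $Y_{\le0}\oplus\Sigma^{-1}\dots$. Alternatively, I would first build the weight structure on the thick subcategory without retracts, which is already triangulated and generated by extensions and shifts. I would then extend to its idempotent completion $\mathcal C$ via Bondarko's result that weight structures extend to Karoubi envelopes, namely Bondarko Prop. 5.2.2. Since $\mathcal C$ is generated as a thick subcategory, every object is a retract of an object built from extensions, so $\mathcal D=\mathcal C$.

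\emph{Uniqueness, boundedness, and the heart.} Uniqueness follows because any such weight structure must contain our halves. Orthogonality together with the retract property of weight classes forces equality. Boundedness holds since generators are in the heart and each object is built in finitely many steps. For the heart, an object $X$ of weight $0$ is a retract of an iterated extension of the shifts $\Sigma^kH$. Via weight decompositions this reduces to the terms with $k=0$, and extensions among elements of $\mathcal H$ split by negativity, since $\pi_0\operatorname{Map}(A,\Sigma B)=0$. Hence $X$ is a retract of a finite sum of objects of $\mathcal H$.
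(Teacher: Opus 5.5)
The paper gives no proof of this statement; it quotes it from Bondarko and H\'ebert. So your argument has to stand on its own, and as written it breaks at the orthogonality step.

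You generate $\mathcal C_{w\le0}$ by the $\Sigma^kH$ with $k\ge0$, and $\mathcal C_{w\ge0}$ by those with $k\le0$. You then claim that negativity kills $\pi_0\operatorname{Map}(\Sigma^kA,\Sigma^{1-l}B)=\pi_{k+l-1}\operatorname{Map}(A,B)$ whenever $k+l-1\ge0$. But negativity only kills the \emph{negative} groups $\pi_{-n}\operatorname{Map}(A,B)$ with $n>0$. The groups $\pi_{k+l-1}$ with $k+l\ge1$ are untouched.

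Concretely, take $H\in\mathcal H$ nonzero. Then $\Sigma H$ lies in your $\mathcal C_{w\le0}$ and $H$ lies in your $\mathcal C_{w\ge0}$, yet $\pi_0\operatorname{Map}(\Sigma H,\Sigma H)\ni\mathrm{id}_{\Sigma H}\neq0$. So axiom (2) fails. The octahedral step for axiom (3) has the same problem: the vanishing of $Z_{\le0}\to\Sigma X_{\ge1}$, where $\Sigma X_{\ge1}\in\Sigma^2\mathcal C_{w\ge0}$, needs $\mathcal C_{w\ge0}$ to be stable under $\Sigma$.

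The root cause is that the paper's axiom (1) has $\Sigma$ and $\Omega$ interchanged relative to Sosnilo's Def.~3.1. Taken literally, those axioms force every heart to be zero: if $A$ is in the heart, then $\Sigma A\in\mathcal C_{w\le0}$, and axiom (2) gives $\pi_0\operatorname{Map}(\Sigma A,\Sigma A)=0$. The correct axioms are $\Omega\mathcal C_{w\le0}\subset\mathcal C_{w\le0}$ and $\Sigma\mathcal C_{w\ge0}\subset\mathcal C_{w\ge0}$. Your (1)$\Rightarrow$(2) argument actually uses this corrected version. The step ``$\Omega\mathcal C_{w\ge0}\subset\mathcal C_{w\ge0}$, hence $\Sigma^{n-1}B\in\mathcal C_{w\ge0}$'' is a non sequitur, but the conclusion does follow from $\Sigma\mathcal C_{w\ge0}\subset\mathcal C_{w\ge0}$. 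So your two directions use opposite conventions.

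The repair is to swap the generators: generate $\mathcal C_{w\le0}$ by the $\Sigma^{-k}H$ and $\mathcal C_{w\ge0}$ by the $\Sigma^{k}H$, for $k\ge0$.
\begin{itemize}
\item Orthogonality on generators then holds: $\pi_0\operatorname{Map}(\Sigma^{-k}A,\Sigma^{1+l}B)=\pi_{-(1+k+l)}\operatorname{Map}(A,B)=0$.
\item The decomposition of $\Sigma^kH$ is trivial: $X_{\le0}=X$ for $k\le0$, and $X_{\ge1}=X$ for $k\ge1$.
\item Your octahedral argument goes through, because $\Sigma X_{\ge1}\in\Sigma^2\mathcal C_{w\ge0}\subset\Sigma\mathcal C_{w\ge0}$.
\end{itemize}

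Two further steps are only sketched. For retracts, your first route is not an argument. The second route is fine: put the structure on the stable subcategory $\mathcal C'$ generated by $\mathcal H$, note that $\mathcal C$ is its idempotent completion, and extend. But the extension of weight structures to idempotent completions is the genuinely nontrivial input, and it should be cited precisely.

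For the heart, ``reduces to $k=0$'' needs to be made precise. By induction along extensions, using the same lifting argument, every $Y$ in the extension closure of $\{\Sigma^{-k}H\}_{k\ge0}$ sits in a fiber sequence $Y'\to Y\to Y_0$. Here $Y'\in\Omega\mathcal C_{w\le0}$ and $Y_0$ is a finite sum of objects of $\mathcal H$; extensions of such sums split by negativity, as you say. Now let $X$ in the heart be a retract of such a $Y$. The retraction $Y\to X$ vanishes on $Y'$, because $\pi_0\operatorname{Map}(\Omega U,X)=\pi_0\operatorname{Map}(U,\Sigma X)=0$ for $U\in\mathcal C_{w\le0}$. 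Hence it factors through $Y_0$, and $X$ is a retract of $Y_0$.
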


\subsection{}
In what follows we consider the case $\mathcal C=\DK_c$ the constructible subcategory. Let $\mathcal{H}$ be the collection of objects of the form $\KBM(X/S)$ in $\DK_c(S)$, where $f:X\to S$ is a projective morphism such that $X$ is regular. 
\begin{lemma}
The collection $\mathcal{H}$ is \textbf{negative}.
\end{lemma}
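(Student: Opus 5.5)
We must show that for $X\to S$ and $Y\to S$ projective with $X,Y$ regular, and any $n>0$, we have $\pi_{-n}\operatorname{Map}_{\DK(S)}(\KBM(X/S),\KBM(Y/S))=0$. The plan is to rewrite this mapping spectrum as a $G$-theory spectrum of a regular scheme and then use that negative $G$-theory vanishes.

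\textbf{Step 1: reduction to Borel--Moore theory over $Y$.} Write $f:X\to S$ and $g:Y\to S$. Since $g$ is proper, $\KBM(Y/S)=g_!\one_Y=g_*\one_Y$. By the adjunction $(g^*,g_*)$ and the base change isomorphism~\eqref{eq:BMBC}, we get
\begin{align}
\operatorname{Map}_{\DK(S)}(\KBM(X/S),\KBM(Y/S))
\simeq
\operatorname{Map}_{\DK(Y)}(g^*f_!\one_X,\one_Y)
\simeq
\operatorname{Map}_{\DK(Y)}(\KBM(X\times_SY/Y),\one_Y)
=\DK^{BM}(X\times_SY/Y).
\end{align}

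\textbf{Step 2: identification with $G$-theory.} The scheme $Y$ is regular, and $X\times_SY\to Y$ is projective, being the base change of $f$; in particular it is quasi-projective. Theorem~\ref{thm:Grep} therefore gives an equivalence $\DK^{BM}(X\times_SY/Y)\simeq G(X\times_SY)$. Hence
\begin{align}
\pi_{-n}\operatorname{Map}(\KBM(X/S),\KBM(Y/S))\simeq G_{-n}(X\times_SY).
\end{align}
This is exactly the mechanism by which $K_0$-correspondences $G_0(X\times_SY)$ will later appear as $\pi_0$ of the mapping spectrum.

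\textbf{Step 3: vanishing of negative $G$-theory.} The spectrum $G(Z)$ of \cite{TT} is the $K$-theory of the abelian category of coherent sheaves on the noetherian scheme $Z=X\times_SY$. By Quillen's construction this spectrum is connective, so $G_{-n}(Z)=0$ for all $n>0$. Here $Z$ is not assumed regular, but that does not matter: negative $K$-groups of a noetherian abelian category vanish (Schlichting), and Thomason--Trobaugh's $G$-theory agrees with Quillen's $G$-theory for noetherian schemes.

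The only step needing care is Step 3, specifically ensuring that the $G(X)$ appearing in Theorem~\ref{thm:Grep} is the connective (Quillen) $G$-theory and not a non-connective variant. I will justify this by recalling that in \eqref{eq:CD13413} the identification passes through $K(M\textrm{ on }X\times_SY)$ with $M$ regular. Non-connective $K$-theory with supports on a regular scheme agrees with $G$-theory of the support and has no negative homotopy, again by \cite[3.21]{TT}.
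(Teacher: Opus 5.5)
Your proof is correct and is essentially the paper's argument: the paper likewise uses the adjunction $(g^*,g_*)$ and base change to obtain the equivalence $\epsilon_{X,Y}$ of \eqref{eq:KBMHom}, identifying $\operatorname{Map}(\KBM(X/S),\KBM(Y/S))$ with $\DK^{BM}(X\times_SY/Y)\simeq G(X\times_SY)$ via Theorem~\ref{thm:Grep}, and then concludes from the connectivity of $G$-theory. Your extra justification that the $G$-theory appearing there is connective (via $K(M\textrm{ on }X\times_SY)$ with $M$ regular) makes explicit a point the paper leaves implicit, but it is not a different route.
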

\proof
If $g:Y\to S$ is another projective morphism with $Y$ regular, we an equivalence
\begin{align}
\label{eq:KBMHom}
\begin{split}
\epsilon_{X,Y}:
&\operatorname{Map}(\KBM(X/S),\KBM(Y/S))
=\operatorname{Map}(\KBM(X/S),g_*\one_Y)\\
\simeq&\operatorname{Map}(g^*\KBM(X/S),\one_Y)
\simeq\operatorname{Map}(\KBM(X\times_SY/Y),\one_Y) \\
=&\DK^{BM}(X\times_SY/Y)\overset{\eqref{eq:DKG}}{\simeq} G(X\times_SY).
\end{split}
\end{align}
Since the $G$-theory spectrum is connective, the result follows.
\endproof

\subsection{}
\label{num:Pnice}
In order to apply Theorem~\ref{th:wtexist}, we need to know that $\mathcal{H}$ generates $\DK_c(S)$. Let $\mathcal{P}$ be a collection of prime numbers. We say that a scheme $S$ is \textbf{$\mathcal{P}$-nice} if there exists a scheme $B$ which is universally $\mathcal{P}$-resolvable in the sense of \cite[4.1.4]{Tem} with $\operatorname{char}(B)\subset\mathcal{P}$, and a maximally dominating morphism of finite type $S\to B$. For example, by \cite[Th. 1.1]{CP}, %the conditions in Lemma~\ref{lm:gen} 
this is the case if $B$ is a separated, reduced scheme of dimension at most $3$, and $\mathcal{P}=\operatorname{char}(B)$. The following result on generators is a generalization of \cite[Cor. 4.4.3]{CD}:
\begin{lemma}
\label{lm:gen}
Let $S$ be a $\mathcal{P}$-nice scheme and let $Z$ be a closed subset of $S$. Let $\mathcal{G}$ be the family of objects of the form $f_*\one_X(n)$ in $\SH(S)[\mathcal{P}^{-1}]$, where $f:X\to S$ is a projective morphism such that $X$ is regular, $n$ is an integer, and $f^{-1}(Z)$ is either empty, or the whole $X$, or the support of a strict normal-crossing divisor. Let $\mathcal{C}$ be the thick stable subcategory of $\SH(S)[\mathcal{P}^{-1}]$ generated by $\mathcal{G}$. Then $\mathcal{C}$ agrees with the subcategory of constructible objects $\SH_c(S)[\mathcal{P}^{-1}]$.
\end{lemma}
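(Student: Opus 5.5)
The plan is to follow the strategy of \cite[Cor. 4.4.3]{CD} (itself modelled on Ayoub's and Riou's arguments), replacing resolution of singularities by Temkin's $\mathcal{P}$-resolvability \cite[4.1.4]{Tem} and inverting $\mathcal{P}$ to handle the alterations that appear. Since every $f_*\one_X(n)$ with $f$ projective is constructible, we have $\mathcal{C}\subset\SH_c(S)[\mathcal{P}^{-1}]$. For the converse, recall that $\SH_c(S)$ is the thick stable subcategory generated by $h_S(Y)(n)=q_\#\one_Y(n)$ for $q:Y\to S$ smooth, or equivalently by $q_*\one_Y(n)$ for $q$ separated of finite type (by localization and purity). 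So it suffices to prove that $q_*\one_Y(n)\in\mathcal{C}$ for every separated finite type $q:Y\to S$. We will prove the stronger, relative statement, with the closed subset $Z$ included, by noetherian induction on $S$ and on the dimension of $Y$. This is why the lemma is stated with $Z$: the induction needs it.

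The first reductions are formal. Using the localization triangles $i_*i^!\to\mathrm{id}\to j_*j^*$ and Nagata compactification, we replace $q$ by a proper morphism, and then by Chow's lemma by a projective morphism, at the cost of objects supported on closed subschemes of strictly smaller dimension. Those are handled by induction: their pushforwards along closed immersions into $S$ stay in $\mathcal{C}$, since a composition of projective morphisms is projective. We may also assume $Y$ is reduced and integral, since $\SH$ is insensitive to nil-immersions.

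The key step is the resolution step. We use that $S$ is $\mathcal{P}$-nice. Given an integral $Y$ projective over $S$ and a closed subset $T\subset Y$ (the preimage of $Z$ together with the boundary loci produced so far), Temkin's theorem supplies a projective alteration $h:X\to Y$ of generic degree $d$ with all prime factors in $\mathcal{P}$, with $X$ regular and $h^{-1}(T)$ a strict normal crossings divisor. Here the maximally dominating hypothesis on $S\to B$ lets us transport universal $\mathcal{P}$-resolvability from $B$ to $Y$. We will then argue, as in \cite[4.4]{CD} following Riou, that after inverting $\mathcal{P}$ the object $\one_Y$ is, over a dense open $V\subset Y$ where $h$ is finite flat, a direct summand of $h_*\one_X$, via a trace map whose composite with the unit is multiplication by $d$. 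The complement $Y\setminus V$ is treated by induction. Strata of $X$ over $h^{-1}(T)$ are intersections of components of an snc divisor, so they are again regular and projective over $S$, which explains the condition on $f^{-1}(Z)$ in $\mathcal{G}$. The boundary terms from the localization sequence for the snc divisor are again of the required form, by absolute purity for regular schemes.

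I expect the main obstacle to be the existence of the trace splitting for $\mathcal{P}$-alterations in $\SH[\mathcal{P}^{-1}]$. In \cite{CD} this is only available in the rational or $\mathbb{Q}$-linear settings, or after inverting the residue characteristics. To get it, I would first try to reduce to finite étale covers together with purely inseparable ones of $\mathcal{P}$-power degree, using Temkin's result that the alteration may be taken to be a composite of a $\mathcal{P}$-primary finite map and a modification. The étale part gives a splitting by the étale transfer. The inseparable part should be an equivalence after inverting $\mathcal{P}$, by the argument of Elmanto–Khan for universal homeomorphisms. Modifications are handled by cdh descent together with the induction.
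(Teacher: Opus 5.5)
\emph{Assessment:} your architecture matches the paper's, but the splitting step you flag is justified incorrectly, and your preliminary reduction also fails as written.

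The shared outline is: Temkin's $\mathcal{P}$-alteration $h\colon X'\to X$ making $(fh)^{-1}(Z)$ empty, everything, or snc; a retraction of $\one\to h_*\one$ at the generic point, using the separable part plus \cite[Th. 2.1.1]{EK} for the purely inseparable part; spreading out; and induction on dimension. The step you yourself flag carries the whole lemma, and your justification for it is wrong. In $\SH$, the unit $\one_K\to\widetilde h_*\one_L$ followed by the \'etale transfer $\widetilde h_*\one_L\to\one_K$ is \emph{not} multiplication by $d=[L:K]$. It is multiplication by a rank-$d$ class in $\pi_0\operatorname{End}(\one_K)=\mathrm{GW}(K)$, namely the $\mathbb{A}^1$-Euler characteristic of $\operatorname{Spec}L$. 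By Hoyois' computation this is the trace form $\mathrm{Tr}_{L/K}\langle 1\rangle$; already for $\mathbb{C}/\mathbb{R}$ it is the hyperbolic form $1+\langle -1\rangle$, not $2$. So ``the \'etale part gives a splitting by the \'etale transfer'' requires showing that this quadratic class becomes invertible in $\mathrm{GW}(K)[\mathcal{P}^{-1}]$. That is a real statement about quadratic forms, not a formality. When $K$ is not formally real it follows because the augmentation ideal of $\mathrm{GW}(K)$ is nil, but for formally real $K$ the signatures of the trace form intervene. This is precisely the input the paper imports from \cite[Lemmas B.3 and B.4]{LYZR}. Your ``multiplication by $d$'' argument is valid in $\DK$, the case the paper actually uses later, because $\KGL$ is oriented and $\mathrm{GW}(K)$ acts on $\one_K$ through the rank. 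It is not valid in $\SH[\mathcal{P}^{-1}]$, where the lemma is stated.

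The reduction fails for the following reason, though it is avoidable. Passing from separated finite type $q$ to a proper $\bar q$ via $i_*i^!\to\mathrm{id}\to j_*j^*$ produces terms $\bar q_*i_*i^!\one_{\bar Y}$. These are not of the form $g_*\one_W(n)$, so your induction hypothesis does not cover them unless you already have purity for $i$. Moreover, knowing that $q_*\one_Y(n)$ is constructible for non-proper $q$ is a finiteness theorem you should not presuppose here.

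The paper avoids all of this:
\begin{itemize}
\item By \cite[Lemme 2.2.23]{Ayo} it suffices to treat $f_*\one_X(n)$ with $f$ projective.
\item It uses the other localization triangle $j_!j^*\to\mathrm{id}\to i_*i^*$, that is, Borel--Moore objects.
\item It reduces to $X$ integral by cdh descent.
\item The generic retraction spreads out by continuity to a dense open $U\subset X$, so that $M^{BM}(U/S)$ is a retract of $M^{BM}(h^{-1}(U)/S)$.
\item The latter lies in $\mathcal{C}$ because it differs from $(fh)_*\one_{X'}\in\mathcal{G}$ by a projective piece of smaller dimension, and induction on $\dim X$ finishes.
\end{itemize}
No absolute purity, snc strata, or noetherian induction on $S$ is needed. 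The condition on $f^{-1}(Z)$ serves only to ensure $(fh)_*\one_{X'}\in\mathcal{G}$.
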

The statement also holds with $\SH$ replaced by $\DK$, with the same argument.
\proof
We proceed as in \cite[Lemma 2.3.6]{Jin24}. By \cite[Lemme 2.2.23]{Ayo}, $\SH_c(S)$ is the thick stable subcategory of $\SH(S)$ generated by the elements of the form $f_*\one_X(n)$, where $f:X\to S$ is a projective morphism and $n$ is an integer. We claim that $f_*\one_X(n)$ lies in $\mathcal{C}$ for all such morphism $f$ and integer $n$, and we use induction on the dimension of $X$. By cdh descent (\cite{Cis13}), we may assume $X$ integral. If $X$ is empty, there is nothing to prove. So by induction we may assume that the claim holds for all projective $S$-schemes whose dimension is smaller than $X$. By \cite[Th. 4.3.1]{Tem}, there exists a projective, surjective, generically finite morphism $h:X'\to X$ whose degree has all its prime divisors lie in $\mathcal{P}$, such that $X'$ is integral and regular and $(fh)^{-1}(Z)$ is either empty, or the whole $X'$, or the support of a strict normal crossing divisor. The morphism induces a finite field extension $\widetilde{h}:\operatorname{Spec}(L)\to\operatorname{Spec}(K)$ on generic points, which is a finite separable extension followed by a finite purely inseparable extension. Since we have inverted the degree of $\widetilde{h}$ (in fact the inseparable degree suffices), by \cite[Lemmas B.3 and B.4]{LYZR} and \cite[Th. 2.1.1]{EK}, the canonical map $\one_K\to\widetilde{h}_!\one_L$ has a retraction in $\SH(K)[\mathcal{P}^{-1}]$. By continuity of $\SH$ as explained in the proof of \cite[Lemma 2.4.6]{BD}, there exists a non-empty open subscheme $U$ of $X$ such that for $V=h^{-1}(U)$, $M^{BM}(U/S)$ is a direct summand of $M^{BM}(V/S)$ in $\SH(S)[\mathcal{P}^{-1}]$. On the other hand, by the localization sequence, for any non-empty open subscheme $V$ of $X'$, the object $M^{BM}(V/S)$ lies in $\mathcal{C}$, since $X'-V$ has dimension smaller than $X$. It follows that $M^{BM}(U/S)$ also lies in $\mathcal{C}$. We conclude by using the localization sequence and the induction assumption again.
\endproof

\begin{corollary}[See \textrm{\cite[Th. 2.1.2]{BoL16}}]
\label{cor:BL212}
Let $\mathcal{P}$ be a collection of prime numbers and let $S$ be a $\mathcal{P}$-nice scheme in the sense of~\ref{num:Pnice}.
There is a unique bounded weight structure on $\DK_c(S)[\mathcal{P}^{-1}]$, called the \textbf{Chow weight structure}, whose heart $\DK_{c}^{w=0}(S)[\mathcal{P}^{-1}]$ is the full subcategory generated under retracts and finite direct sums by objects of the form $\KBM(X/S)$, where $f:X\to S$ is a projective morphism such that $X$ is regular. 
\end{corollary}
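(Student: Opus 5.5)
The plan is to obtain Corollary~\ref{cor:BL212} from Theorem~\ref{th:wtexist}, applied to the idempotent complete stable $\infty$-category $\mathcal C=\DK_c(S)[\mathcal{P}^{-1}]$ and the collection $\mathcal{H}$ of objects $\KBM(X/S)$, with $f\colon X\to S$ projective and $X$ regular. Idempotent completeness holds because compact objects of a presentable stable $\infty$-category are closed under retracts, and this persists after $\mathbb{Z}[\mathcal{P}^{-1}]$-linearization. Two hypotheses then remain to be checked: that $\mathcal{H}$ is negative, and that $\mathcal{H}$ generates $\mathcal C$ as a thick stable subcategory. Once both hold, Theorem~\ref{th:wtexist} supplies existence, uniqueness and boundedness of the weight structure, and identifies the heart as the closure of $\mathcal{H}$ under retracts and finite direct sums, which is exactly the statement.

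Negativity comes from the preceding lemma. The equivalence $\epsilon_{X,Y}$ of~\eqref{eq:KBMHom} identifies $\operatorname{Map}(\KBM(X/S),\KBM(Y/S))$ with $G(X\times_SY)$, and $G$-theory is connective. Here $X\times_SY$ is projective over the regular scheme $Y$, so Theorem~\ref{thm:Grep} applies. After inverting $\mathcal{P}$ the mapping spectrum becomes $G(X\times_SY)[\mathcal{P}^{-1}]$, which is still connective, so $\pi_{-n}$ vanishes for every $n>0$.

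Generation is the step that needs care. By Lemma~\ref{lm:gen} in its $\DK$-version, taking $Z=\emptyset$, the thick subcategory generated by the objects $f_*\one_X(n)$ with $f$ projective and $X$ regular is all of $\DK_c(S)[\mathcal{P}^{-1}]$. Since $f$ is proper, $f_*\one_X=f_!\one_X=\KBM(X/S)$. The Tate twists are then absorbed by Bott periodicity: Lemma~\ref{omega} applied to the trivial line bundle gives $\one(1)[2]\simeq\one$, so $\one_X(n)\simeq\one_X[-2n]$, and each generator is a shift of an element of $\mathcal{H}$. Shifts are already allowed in a thick stable subcategory, so $\mathcal{H}$ generates.

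The main obstacle I anticipate is justifying Lemma~\ref{lm:gen} for $\DK$ rather than $\SH$. The cdh descent, continuity, and trace-retraction arguments in its proof must hold for $\KGL$-modules. I would handle this by applying the free-module functor $-\otimes\KGL\colon\SH(S)\to\DK(S)$. This functor commutes with $f_*$ for proper $f$ (via the projection formula) and with $f^*$, and it preserves compact generators. It therefore carries the $\SH$-statement directly to $\DK$ without redoing the argument.
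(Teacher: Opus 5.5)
Your proposal is correct and follows the paper's (implicit) argument: apply Theorem~\ref{th:wtexist} to $\mathcal{H}$, get negativity from~\eqref{eq:KBMHom} and the connectivity of $G$-theory, and get generation from the $\DK$-version of Lemma~\ref{lm:gen}. Two steps the paper leaves implicit are handled well in your write-up: using Lemma~\ref{omega} to absorb the Tate twists $(n)$ into shifts, and deducing the $\DK$-version of Lemma~\ref{lm:gen} from the $\SH$-version via the free $\KGL$-module functor, which commutes with $f_*=f_!$ for proper $f$ and preserves compact generators.
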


\subsection{}
%Let $k$ be a perfect field of exponential characteristic $p$ and let $S$ be a separated $k$-scheme of finite type. 
We consider the homotopy category $\operatorname{Ho}(\DK_{c}^{w=0}(S)[\mathcal{P}^{-1}])$. The following is \cite[Prop.~2.39]{Jin16}:
\begin{proposition}\label{prop:composition}
Let $X,Y,Z$ be projective $S$-schemes that are regular, %smooth over $k$, 
and consider two maps
\begin{align}
\alpha\colon\KBM(X/S)\to\KBM(Y/S),
\qquad
\beta\colon\KBM(Y/S)\to\KBM(Z/S).
\end{align}
%Consider the Cartesian diagram
%\begin{align}
%\begin{split}
%  \xymatrix@=10pt{
%    X\times_SY\times_SZ \ar[r]^-{} \ar[d]_-{} & X\times_SY\times_kY\times_SZ \ar[d]^-{f} \\
%    Y \ar[r]^-{\delta} & Y\times_kY.
%  }
%\end{split}
%\end{align}
If $p\colon X\times_SY\times_SZ\to X\times_SZ$ is the canonical projection, then the map $\epsilon_{X,Z}$ in~\eqref{eq:KBMHom} satisfies
\begin{align}
\epsilon_{X,Z}(\beta\circ\alpha)
=
p_*\bigl(\epsilon_{X,Y}(\alpha)\times_S\epsilon_{Y,Z}(\beta)\bigr)
%=
%p_*\circ R_f(\delta)
%\bigl(\epsilon_{X,Y}(\alpha)\times_k\epsilon_{Y,Z}(\beta)\bigr)
.
\end{align}
\end{proposition}

\subsection{}
We are now ready to prove the main theorem, identifying the Chow weight-heart on $\DK_c$:
\begin{theorem}\label{thm:main}
%Let $k$ be a perfect field of exponential characteristic $p$ and let $S$ be a separated $k$-scheme of finite type. 
Let $\mathcal{P}$ be a collection of prime numbers and let $S$ be a $\mathcal{P}$-nice scheme in the sense of~\ref{num:Pnice}. There is an equivalence 
\begin{align}
\KM(S)[\mathcal{P}^{-1}]
\simeq
\operatorname{Ho}(\DK_{c}^{w=0}(S)[\mathcal{P}^{-1}])
\end{align}
between the category of $K_0$-motives over $S$ (Definition~\ref{def:K0M}) and the Chow weight-heart on $\DK_c$ (Corollary~\ref{cor:BL212}).
\end{theorem}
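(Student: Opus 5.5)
We define a functor $\Phi\colon \KC(S)[\mathcal{P}^{-1}]\to \operatorname{Ho}(\DK_{c}^{w=0}(S)[\mathcal{P}^{-1}])$, show that it is fully faithful, and then pass to idempotent completions. On objects, $\Phi$ sends a projective morphism $f\colon X\to S$ with $X$ regular to $\KBM(X/S)$. On morphisms, we use the inverse of the isomorphism on $\pi_0$ induced by the equivalence $\epsilon_{X,Y}$ of~\eqref{eq:KBMHom}:
\begin{align}
\pi_0\operatorname{Map}(\KBM(X/S),\KBM(Y/S))\simeq G_0(X\times_SY).
\end{align}
This identification is available because $Y$ is regular and $X\times_SY\to Y$ is projective, so Theorem~\ref{thm:Grep} applies over the regular base $Y$. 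After tensoring with $\mathbb{Z}[\mathcal{P}^{-1}]$ it remains an isomorphism, since localization commutes with $\pi_0$ of mapping spectra of compact objects. Thus, once $\Phi$ is a functor, it is fully faithful by construction.

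The main step is functoriality. For compositions, Proposition~\ref{prop:composition} gives
\begin{align}
\epsilon_{X,Z}(\beta\circ\alpha)=p_*\bigl(\epsilon_{X,Y}(\alpha)\times_S\epsilon_{Y,Z}(\beta)\bigr),
\end{align}
where the product $\times_S$ is the Borel--Moore product~\eqref{eq:DKprod} transported through~\eqref{eq:DKG}. We compare this with composition in $\KC(S)$, Definition~\ref{def:K0M}, which uses the $\operatorname{Tor}$ product over $\mathcal{O}_Y$ followed by $p_*$.
\begin{itemize}
\item For $p_*$: Lemma~\ref{lm:DKGpf} identifies the Borel--Moore push-forward with $G$-theory push-forward; $p$ is projective because $X,Y,Z$ are projective over $S$.
\item For the product: the relevant product is over the regular scheme $Y$, since $X\times_SY$ and $Y\times_SZ$ are $Y$-schemes and $X\times_SY\times_SZ$ is their fiber product over $Y$. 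Lemma~\ref{lm:DKG0prodd}, applied with base $Y$, identifies~\eqref{eq:DKprod} over $Y$ with~\eqref{eq:GproductTor}, which is exactly $\sum_i(-1)^i[\Tor_i^{\mathcal O_Y}(\mathcal F,\mathcal G)]$.
\end{itemize}
Getting this to work is the point I expect to be delicate. In Proposition~\ref{prop:composition}, the hom-spectra are computed via $\epsilon$ as Borel--Moore theory relative to the \emph{target} ($Y$, resp.\ $Z$). So we must check that the product $\times_S$ appearing there, when written in terms of $\DK^{BM}(-/Y)$, is the product~\eqref{eq:DKprod} over the base $Y$. I would do this by unwinding the adjunctions in~\eqref{eq:KBMHom} and using the base change~\eqref{eq:BMBC} together with compatibility of~\eqref{eq:DKG} with refined Gysin maps (Lemma~\ref{lm:DKGrG}). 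If needed, I would also use quasi-projectivity of the fiber products over $Y$ so that Theorem~\ref{thm:Grep} applies.

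For identities, $\epsilon_{X,X}(\mathrm{id})$ is obtained from the unit $\one_X\to\one_X$ through the counit map $\KBM(X\times_SX/X)\to\one_X$ attached to the diagonal section. Under~\eqref{eq:DKG} this becomes $\delta_{X/S,*}[\mathcal{O}_X]$, by Lemma~\ref{lm:DKGpf} applied to the closed immersion $\delta_{X/S}$ and the identification of $\DK^{BM}(X/X)$ with $G(X)$ sending $1$ to $[\mathcal{O}_X]$. Hence $\Phi$ is a fully faithful additive functor.

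Finally, we pass to idempotent completions. The category $\operatorname{Ho}(\DK_c^{w=0}(S)[\mathcal{P}^{-1}])$ is idempotent complete, since it is a retract-closed subcategory of the homotopy category of an idempotent complete stable $\infty$-category, where idempotents split. So $\Phi$ extends to a fully faithful functor out of $\KM(S)[\mathcal{P}^{-1}]$. By Corollary~\ref{cor:BL212}, every object of the heart is a retract of a finite direct sum of objects $\KBM(X/S)$ with $X$ regular and projective over $S$. Such direct sums are $\KBM(\coprod X_i/S)$, so they lie in the image of $\KC(S)$, and the extended functor is essentially surjective. This gives the claimed equivalence.
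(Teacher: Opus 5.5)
Your proposal is correct and follows the paper's proof. You define the functor on $\KC(S)[\mathcal{P}^{-1}]$ via the identification $\epsilon_{X,Y}$ of~\eqref{eq:KBMHom}, and deduce functoriality and full faithfulness from Proposition~\ref{prop:composition} together with the compatibility of~\eqref{eq:DKG} with push-forwards, refined Gysin maps and products over the regular base $Y$ (Lemmas~\ref{lm:DKGpf}, \ref{lm:DKGrG}, \ref{lm:DKG0prodd}); you then pass to idempotent completions. Your explicit handling of identities, of the $\mathbb{Z}[\mathcal{P}^{-1}]$-localization on $\pi_0$ of mapping spectra between compact objects, and of essential surjectivity via disjoint unions fills in steps that the paper leaves implicit.
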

\proof
First construct a functor $F:\KC(S)[\mathcal{P}^{-1}]\to \operatorname{Ho}(\DK_{c}^{w=0}(S)[\mathcal{P}^{-1}])$ by sending an object $f:X\to S$ with $f$ projective and $X$ smooth over $k$ to the object $\KBM(X/S)$. The equivalence~\eqref{eq:KBMHom}, Proposition~\ref{prop:composition} and Lemmas~\ref{lm:DKGrG} %, \ref{lm:DKGpf} 
and \ref{lm:DKG0prodd} show that $F$ is indeed a functor, which is fully faithful. Since the category $\DK(S)$ is idempotent complete, the functor $F$ extends to a functor $\KM(S)[\mathcal{P}^{-1}]\simeq\operatorname{Ho}(\DK_{c}^{w=0}(S)[\mathcal{P}^{-1}])$ which is an equivalence.
\endproof

\end{document}